\documentclass[3p,preprint]{elsarticle}
\usepackage{graphicx}
\usepackage{amssymb}
\usepackage{amsthm}
\usepackage{amsmath}
\usepackage{revsymb}
\usepackage{empheq}
\usepackage{mathtools}
\usepackage{subcaption}
\usepackage{lineno} 
\usepackage{enumerate}
\usepackage{multirow} 
\usepackage{array} 
\usepackage{booktabs} 
\usepackage{tabularx}
\usepackage{longtable}
\usepackage{bm}
\usepackage{setspace}
\usepackage{float}
\usepackage{bbm}
\usepackage[hidelinks]{hyperref}
\usepackage{algorithm}
\usepackage{algorithmic}
\journal{arXiv}
\usepackage[figuresright]{rotating}

\newcounter{thmcounter}[section]
\theoremstyle{definition}
\newtheorem{definition}[thmcounter]{Definition}

\newtheorem{remark}[thmcounter]{Remark}
\theoremstyle{plain}
\newtheorem{theorem}[thmcounter]{Theorem}

\newtheorem{proposition}[thmcounter]{Proposition}

\newtheorem{corollary}[thmcounter]{Corollary}

\numberwithin{equation}{section}
\numberwithin{thmcounter}{section}

\allowdisplaybreaks[3]

\newcommand{\doublehookrightarrowalt}{%
    \lhook\joinrel\relbar\mspace{-12mu}\hookrightarrow
}

\begin{document}

\begin{frontmatter}

\title{On a nonlinear nonlocal reaction-diffusion system applied to image restoration}

\author{Yuhang Li$^{\ \textrm{a}}$}
\ead{mathlyh@stu.hit.edu.cn}
\author{Zhichang Guo$^{\ \textrm{a}}$}
\ead{mathgzc@hit.edu.cn}
\author{Jingfeng Shao$^{\ \textrm{a},\textrm{b},}\cormark[1]$}
\ead{sjfmath@163.com}
\author{Boying Wu$^{\ \textrm{a}}$}
\ead{mathwby@hit.edu.cn}
\cortext[1]{Corresponding author.}

\address{$^{\rm{a}}$The Department of Mathematics, Harbin Institute of Technology, Harbin 150001, China}
\address{$^{\rm{b}}$College of Mathematics and Information Science, Guangxi University, Nanning 530004, China}

\begin{abstract}
    This paper deals with a novel nonlinear coupled nonlocal reaction-diffusion system 
proposed for image restoration, characterized by the advantages of preserving 
low gray level features and textures. The gray level indicator in the proposed 
model is regularized using a new method based on porous media type equations, which is 
suitable for recovering noisy blurred images. The well-posedness, regularity, and other 
properties of the model are investigated, addressing the lack of theoretical analysis in 
those existing similar types of models. Numerical experiments conducted on texture and 
satellite images demonstrate the effectiveness of the proposed model in denoising 
and deblurring tasks.
\end{abstract}

\begin{keyword}
nonlinear nonlocal parabolic equations \sep fractional derivatives \sep reaction-diffusion system 
\sep maximal regularity \sep image processing
\end{keyword}

\end{frontmatter}

\section{Introduction}\label{sec1}

\subsection{{Diffusions and regularizations for image restoration}}
In many image processing applications, the acquired images are often blurred 
and corrupted by noise \cite{WELK2005,CHAN2005}. 
The degradation model of an image can typically be formulated as 
$f=K{u^{\dagger}}+n$,
where $K$ is some linear operator, especially convolution operator, 
$n$ is noise, $f$ is the observed image, and {$u^{\dagger}$} is the original image. 
In this paper, we mainly focus on the non-blind image restoration problem, 
which aims to recover a clean image $u$ from a noisy blurred image $f$ 
when $K$ is known exactly. 
{As this is an ill-posed inverse problem, regularization techniques 
are often required, typically through the minimization of} the following energy functional:
\begin{equation}  \label{eqn:energy}
    E(u) = \varPhi(u) + \frac{\lambda}{2} \|Ku-f\|_{2}^2,  
\end{equation}
where $\|Ku-f\|_2^{{2}}$ is called the fidelity term and $\varPhi(u)$ is called the 
regularization term, $\lambda \geq 0$ is a parameter to balance the fidelity term 
and regularization term \cite{CHAN2005,AUBERT1997}.
{The regularization term} $\varPhi(u)$ {imposes a priori 
constraint on the unknown minimizer $u$ and} is generally chosen 
as {$\frac{1}{2}\int_{\Omega} G(|\nabla u|^2) dx$} in 
earlier methods, yielding the gradient flow
{(or the subgradient flow if $E$ is not Fr\'{e}chet differentiable)}
corresponding to \eqref{eqn:energy} as
\begin{equation}  \label{eqn:second-order-PDE}
    u_t = 
        \operatorname{div} \big(g(|\nabla u|^2) 
        \nabla u \big) - \lambda K'(Ku-f)
\end{equation}
is a reaction-diffusion equation, where $K'$ is the adjoint operator of $K$, 
{$G: \mathbb{R}_{\geq 0} \to \mathbb{R}$, $g(s) = \frac{d}{ds} G(s)$}. 
Many models have been proposed for image restoration.
Quadratic Tikhonov regularization \cite{TIKHONOV1977}, which amounts to taking $G(s) = s$ and 
minimizing \eqref{eqn:energy} in $H^1(\Omega)$, leads to the gradient descent flow 
\begin{equation}  \label{eqn:heat-reaction}
u_t = \Delta u - \lambda K'(Ku-f),
\end{equation}
and it is well known that sharp edges cannot be preserved under \eqref{eqn:heat-reaction}.
In the well-known ROF model {proposed by Rudin, Osher and Fatemi} 
\cite{RUDIN1992,RUDIN1994} {(see also \cite{CAI2012}), total variation (TV) regularization is employed 
by selecting $G(s)=2\sqrt{s}$ and minimizing \eqref{eqn:energy} in $BV(\Omega)$}, 
which results in {the subgradient flow}
\begin{equation}  \label{eqn:tv-reaction} 
    {u_t = \operatorname{div} \left(\frac{\nabla u}{|\nabla u|}\right) 
- \lambda K'(Ku-f).} 
\end{equation}
The use of TV can effectively preserve edges but easily cause a staircase effect. 
Various improved versions \cite{TAKEDA2008,HUANG2008,BREDIES2010} have been proposed 
to eliminate this effect. Another popular class of choices 
is to {employ} Perona-Malik 
diffusion \cite{PERONA1990,WELK2005}, {that is, using the flow} 
\begin{equation}  \label{eqn:pm-reaction} 
    {u_t = \operatorname{div} \left(\frac{1}{1+k_1 |\nabla u|^2} \nabla u\right) 
- \lambda K'(Ku-f)}
\end{equation}
{generated by \eqref{eqn:energy} with $G(s) = \frac{1}{k_1} \log(1+ k_1 s)$.}
Perona-Malik diffusion is a forward-backward diffusion process which is known to 
sharpen edges effectively but {it is mathematically ill-posed and can} cause 
staircase instability \cite{WEICKERT1997-2,GUIDOTTI2015}. 
To reduce the noise sensitivity 
of Perona-Malik diffusion \cite{WEICKERT1997}, various regularization 
methods \cite{CATTE1992,GUIDOTTI2009,GUIDOTTI2012,GUIDOTTI2013} 
can be utilized to enhance its stability. Higher order regularization terms and PDEs 
have been proposed to overcome the staircase effect of edge-preserving second-order PDEs. 
Some early works \cite{YOU2000,LYSAKER2003} indicated that methods based on 
fourth-order PDEs can effectively reduce the staircase effect but encourage
piecewise planar solutions, with modification can be found 
in \cite{BERTOZZI2004,GUIDOTTI2011,WEN2022}.  

{Besides being interpreted as a gradient descent flow of an energy functional, 
a PDE of the form \eqref{eqn:second-order-PDE} can also be directly understood 
from the perspective of reaction-diffusion equations. From the physical point of view, 
reaction-diffusion equation describes the evolution of the concentration of a substance 
in space and time. The diffusion represents the transport of the substance from regions 
of high concentration to regions of low concentration,
and the reaction process accounts for transformation or other 
dynamic processes \cite{VOLPERT2014,ROUSSEL2019}. 
The classical diffusion process for a density function $u$ originates from the principle of mass 
conservation \cite{MEHRER2007}, expressed as 
\begin{equation} \label{eqn:mass-conservation}
    u_t +\operatorname*{div}(\vec{J}) = 0,
\end{equation}
where $\vec{J}$ denotes the diffusion flux. According to Fick’s \textit{first law of diffusion} \cite{MEHRER2007}, 
\begin{equation} \label{eqn:fick-law}
    \vec{J} = -c \nabla u,
\end{equation}
where $c$ is the diffusion coefficient measuring the rate of diffusion. 
Diffusion has a smoothing effect on spatial heterogeneity, and when the concentration is treated as
the gray level at a given location, the diffusion process can be applied to suppress noise in
images \cite{WEICKERT1998-2,BARBU2019}. 
If $c$ is a constant, \eqref{eqn:mass-conservation} reduces to the heat equation, performing 
isotropic diffusion that smooths the image and thereby eliminates noise.
However, it is well-established that smoothing an image using the heat equation is equivalent 
to convolving it with a Gaussian kernel (indeed, $u(t)$ belongs to the $C^{\infty}$ class for $t>0$), 
which leads to the loss of edge information in the image. In practice, $c$ is usually chosen 
as a function $c(u)$ depending on $u$, resulting in \textit{anisotropic} diffusion, 
so that the diffusion rate varies between certain important image features and other regions in the image,
thereby enabling both noise removal and the preservation of these features \cite{BARBU2019}. 
By adding the reaction term $-\lambda K'(Ku-f)$ to the right-hand 
side of \eqref{eqn:mass-conservation} results in the reaction-diffusion equation
\begin{equation} \label{eqn:second-order-PDE-c}
u_t = \operatorname{div} (c(u) \nabla u) - \lambda K'(Ku-f),
\end{equation}
which can be applied to image restoration problem introduced at 
the beginning \cite{WELK2005,MORIGI2008,ZHAO2018}. 

In \eqref{eqn:second-order-PDE-c}, the reaction term $-\lambda K'(Ku-f)$ drives $u$ so 
that $Ku$ approaches $f$, playing the role of the fidelity term in \eqref{eqn:energy},
and the diffusion term $\operatorname{div} (c(u) \nabla u)$ smooths $u$ and endows it with some regularity 
(or makes it belong to certain function spaces), serving a role similar to the regularization 
term in \eqref{eqn:energy}. 
When $c$ is constant, \eqref{eqn:second-order-PDE-c} corresponds to the 
gradient descent flow of the Tikhonov regularization \eqref{eqn:heat-reaction}, leading to 
over-smoothing of the image and blurring of edges and other features. 
Choosing $c(u) = \frac{1}{|\nabla u|}$ yields \eqref{eqn:tv-reaction}, 
the subgradient flow of the ROF model, where diffusion is slower in regions with large 
gradients and faster in regions with small gradients. Such gradient-dependent diffusion 
coefficients $c(u) = b(|\nabla u|)$, which decrease monotonically with $|\nabla u|$, 
are also referred to as ``edge detectors.'' Typical choices 
include $b(|\nabla u|) = \frac{1}{1+k_1 |\nabla u|^2}$ and 
$b(|\nabla u|) = |\nabla u|^{p-2}$ ($1<p<2$), corresponding respectively to 
Perona-Malik diffusion \eqref{eqn:pm-reaction} and $p$-Laplacian-type diffusion. 
All these choices help slow down diffusion in regions with large gradients, 
preventing the smoothing effect from destroying edges in 
the image \cite{BARBU2019}. The above analysis also shows
that, besides choosing different regularization terms in \eqref{eqn:energy}, one can directly 
impose a priori information on the solution by designing an appropriate diffusion coefficient.} 
In \cite{ZHOU2015}, a doubly degenerate (DD) diffusion model was proposed as a novel image 
restoration framework, originally utilized for multiplicative noise removal. The 
framework was described as 
\begin{equation}  \label{eqn:DD}
    u_t = 
        \operatorname{div} \left(c\left(\left|\nabla u\right|,u\right) 
        \nabla u\right) - \lambda h(f, u).
\end{equation}
{Here} $c(\left|\nabla u\right|,u)=a(u)b(|\nabla u|)$, {where 
$a(u)$ is chosen to be monotonically increasing in $u$ and is referred to as 
a gray-level indicator, $b(|\nabla u|)$ is the edge detector described above, and} ${- \lambda} h(u,f)$ 
represents certain {reaction} terms derived from variational models. 
Numerous PDE-based image restoration models can be incorporated into this framework. 
The gray level indicator is used to control the diffusion {rate} at different 
gray levels and the edge detector 
is employed to preserve image edges; various options are available for selecting both. 
{Typical choices are 
\begin{equation} \label{eqn:DD-a-b} 
    a(u) = \left(\frac{|u|}{M}\right)^{\gamma},\quad 
    b(|\nabla u|) = \frac{1}{1 + k_1 |\nabla u|^{\beta}}, 
\end{equation}
where $M=\sup_{x\in\Omega}u(x)$ and $\gamma,k_1,\beta>0$. With the aid of the 
gray level indicator and the edge detector, the diffusion 
rate is reduced in regions with low gray level or large gradients, thereby lessening 
the smoothing effect that would otherwise destroy faint structures and sharp edges. 
Taking the reaction term $- \lambda h(f,u)$ in \eqref{eqn:DD} to be $-\lambda K'(Ku-f)$,
the DD model \eqref{eqn:DD} becomes applicable to the restoration of noisy blurred image   
discussed in this paper.

In summary, reaction-diffusion equations for image restoration can be derived either as the 
gradient flow of an energy functional or constructed directly by designing an appropriate 
diffusion coefficient. The latter approach may yield a model that no longer possesses a variational 
structure like \eqref{eqn:energy}, but it brings two advantages. Firstly, this approach offers 
greater flexibility in modeling: 
by carefully designing the diffusion coefficient one can precisely manipulate the diffusion 
rate at each pixel, allowing better preservation of desired image features and improved 
restoration performance. The model also remains interpretable from the reaction-diffusion 
viewpoint. Secondly, by modifying certain terms in the diffusion coefficient 
one can turn mathematically ill-posed equations derived from \eqref{eqn:energy} into well-posed ones; 
a typical example is the regularization of the Perona-Malik model \eqref{eqn:pm-reaction}. 
We will focus mainly on this approach in what follows.}

\subsection{{Nonlocal image restoration models}}

In natural images, there are often more complex structures such as textures and other 
repetitive features, which impose higher demands on image restoration models. Nonlocal 
operators have been defined in \cite{GILBOA2007,GILBOA2009} to extend the nonlocal method 
to the variational framework. The nonlocal total variation (NLTV) 
regularization \cite{GILBOA2009,ZHANG2010} remains one of the most popular deblurring 
methods for recovering texture to date. Other image restoration methods based on nonlocal 
functionals or PDEs \cite{KINDERMANN2005,SHI2021,WEN2023} 
have also achieved success in preserving textures. Fractional derivative-based variational 
and PDE models were proposed in the past two decades for image processing due to the nonlocal 
property of the fractional derivative. Bai {and Feng} \cite{BAI2007} proposed a 
fractional diffusion equation formulated as
\begin{equation}  \label{eqn:fractional-order-PDE}
    u_t = 
        \operatorname{div}^{\alpha} \big(g(|\nabla^{\alpha} u|^2) 
        \nabla^{\alpha} u\big), 
\end{equation}
where $1<\alpha<2$, $\nabla^{\alpha}$ denotes the fractional gradient operator 
and $-\operatorname{div}^{\alpha}$ denotes the adjoint operator of $\nabla^{\alpha}${, and  
the edge detector is chosen as $g(s)=\frac{1}{1+s}$ as in the Perona-Malik model.
\eqref{eqn:fractional-order-PDE} can be regarded as the gradient flow of the energy 
involving a fractional-order gradient:
\begin{equation} \label{eqn:energy-fractional}
E(u)=\frac{1}{2} \int_{\Omega}G(|\nabla^\alpha u|^2) dx
\end{equation}
with $g(s) = \frac{d}{ds} G(s)$}. 
This model was proposed for image denoising and can be viewed as generalizations 
of second-order and fourth-order anisotropic diffusion equations. It has been demonstrated 
to effectively eliminate the staircase effect. Zhang et al. 
\cite{ZHANG2015} established a comprehensive total $\alpha$-order variation 
framework for image restoration, where $\alpha$ can take any positive value. 
This framework along with other fractional derivative-based models 
\cite{CHAN2013,YIN2016} demonstrates the texture preservation 
capability in image restoration.

Inspired by the model \eqref{eqn:fractional-order-PDE} of Bai and Feng, 
Yao et al. generalized the DD model \eqref{eqn:DD} to fractional order, initially applied it for 
multiplicative noise removal \cite{YAO2019}, and later extended its application 
to deblurring tasks \cite{GUO2019}. The model takes the form
\begin{equation}  \label{eqn:fractional-DD}
    u_t = 
        \operatorname{div}^{\alpha} \left(c\left(\left|\nabla^{\alpha} u\right|,u\right) 
        \nabla^{\alpha} u\right) - \lambda K'(Ku-f), 
\end{equation}
where $0<\alpha<2$, $c(\left|\nabla^{\alpha} u\right|,u)=a(u)b(|\nabla^{\alpha} u|)$. The 
gray level indicator is chosen as
\begin{equation} \label{eqn:fractional-DD-a}
    a(u) = \left(\frac{|u|}{M}\right)^{\gamma},
\end{equation}
where $M = \sup_{x \in \Omega} u(x)$, $\gamma>0$. On 
the other hand,
$b(|\nabla^{\alpha} u|) = \frac{1}{1 + k_1 \left|\nabla^{\alpha} u\right|^{\beta}}$
can be seen as a texture detection function, where $k_1 > 0$, $\beta > 0$. 
The design of the diffusion coefficient in this model emphasizes the preservation 
of low gray level features and texture information. Experimental results have 
demonstrated the effectiveness of the model in restoring texture-rich images.
However, the model \eqref{eqn:fractional-DD} still faces two challenges: Firstly, 
since the observed image is always noisy or noisy blurred, the gray level 
indicator \eqref{eqn:fractional-DD-a} may result in 
a large difference in diffusion {rates} at neighboring pixels in homogeneous regions, 
thereby reducing the visual quality of the restored image. Secondly, {the model 
no longer complies with the fundamental diffusion 
laws \eqref{eqn:mass-conservation} and \eqref{eqn:fick-law}; 
in particular,} the presence of the fractional-order gradient {violates mass conservation}, 
which complicates its physical interpretation and increases computational costs. 
In \cite{SHAN2022}, a model based on \eqref{eqn:mass-conservation} and 
fractional Fick's law \cite{SCHUMER2001}
\begin{equation} \label{eqn:fractional-fick-law}
    {\vec{J} = -c \nabla^{\alpha} u}
\end{equation}
was proposed for multiplicative noise removal, 
attempting to address the challenges mentioned above. The model takes on the form
\begin{equation} \label{eqn:Shan}
    u_t = 
        \operatorname{div} \left(c\left(\left|\nabla^{\alpha} u\right|,u_{\sigma}\right) 
        \nabla^{\alpha} u\right),
\end{equation}
where $0<\alpha \leq 3$, 
$c(\left|\nabla u\right|,u_{\sigma})=a(u_{\sigma})b(|\nabla^{\alpha} u|)$.
The authors chose the gray level indicator as 
\begin{equation} \label{eqn:Shan-a}
    a(u_{\sigma}) = \left(\frac{|u_{\sigma}|}{M_{\sigma}}\right)^{\gamma},
\end{equation}
where $u_{\sigma} = G_{\sigma} * u$, $G_{\sigma}$ is a Gaussian convolution
kernel, $M_{\sigma} = \sup_{x \in \Omega} u_{\sigma}(x)$, 
$\sigma, \gamma>0$. Gaussian filters can reduce the interference of noise on 
image features, enabling the gray level indicator to perform more effectively 
in controlling the diffusion {rate}. Similar operations have also been observed 
in previous works \cite{MAJEE2020,SHAN2019,ZHOU2018}.
The selection of the texture detection function is similar to that 
in \eqref{eqn:fractional-DD}. This model possesses has a more comprehensive 
physical background and experimental results have demonstrated that it achieves 
superior image restoration performance and fast computational speeds.

\subsection{The motivation of the paper}
The motivation of this paper is to propose an anisotropic diffusion model from a clear 
physical background, with the aim of preserving low gray level image features and texture 
details in denoising and deblurring tasks on both texture-rich images 
and {detailed satellite} images. 
A simple idea is to consider the equation 
\begin{equation} \label{eqn:modified-Shan}
    u_t =
\operatorname{div} \left(c\left(\left|\nabla^{\alpha} u\right|,u_{\sigma}\right)
\nabla u\right) - \lambda K'(Ku-f),
\end{equation}
and select the same gray level indicator as in \eqref{eqn:Shan}. However, this is not 
suitable for restoring noisy blurred images because the Gaussian filter makes the 
already blurred image \emph{over smooth} ($u_{\sigma}$ belongs to the $C^{\infty}$ class), 
which would result in a rough gray level indication and decrease the 
visual quality of the restored image. This requires us to adopt a ``gentler" approach 
to regularize $u$ in \eqref{eqn:fractional-DD-a}. The gray level indicator 
in \eqref{eqn:fractional-DD-a} was initially inspired by the gamma 
correction \cite{ZHOU2015,POYNTON2012}, 
but we now provide another interpretation. In DD model \eqref{eqn:DD}, the gray level indicator 
ensures that the diffusion is slow when $u$ is small and fast when $u$ is large, 
which is the characteristic of porous medium equations (PMEs), also known as Newtonian 
filtration slow diffusion \cite{VAZQUEZ2006}. In fact, if we 
set $a(u)=\frac{|u|^{\gamma}}{M^{\gamma}}$, $b(\cdot) \equiv 1$, 
$h(f,u)=0$ in the DD model \eqref{eqn:DD}, and taking $M$ as a positive constant, 
such as $M = \sup_{x \in \Omega} f(x)$, \eqref{eqn:DD} becomes 
a PME:
\begin{equation} \label{eqn:DD-PME}
    u_t 
    = \frac{M^{-\gamma}}{\gamma+1} \operatorname{div} \left(|u|^{\gamma}\nabla u\right) 
    = \frac{M^{-\gamma}}{\gamma+1} \Delta |u|^{\gamma+1}.
\end{equation}
This type of equation appears in different contexts, such as 
in the flow of gases through porous media \cite{BARENBLATT1990,MUSKAT1937}, 
in the heat conduction at high temperatures \cite{ZEL2002} and in groundwater flow 
\cite{BOUSSINESQ1904}. For the case where $b(\cdot)$ is chosen as a general edge 
detection operator, the equation 
\[u_t =
\operatorname{div} \left(\frac{|u|^{\gamma}}{M^{\gamma}} b(\left|\nabla u\right|) 
\nabla u\right)\] 
can be viewed as an \emph{anisotropic} PME. 

Inspired by the analysis above, we consider introducing a variable $v$ into the gray value 
indicator, designing it to satisfy a type of slow diffusion equation
and ensuring it has the same initial datum as $u$, 
so that it gradually becomes more regular during the diffusion process, rather than always 
remaining smooth like $u_{\sigma}$ in \eqref{eqn:Shan-a}. The diffusion equation for 
denoising and deblurring is written as
\begin{equation} \label{eqn:Ours-1} 
u_t =
\operatorname{div} \left(\frac{|v|^{\gamma}}{M^{\gamma}} \frac{1}{1 + k_1 \left|\nabla^{\alpha} u\right|^{\beta}}
\nabla u\right) - \lambda K'(Ku-f).
\end{equation}
To emphasize mutual transfer of information between the two equations, we design the 
diffusion equation for $v$ as
\begin{equation} \label{eqn:Ours-2} 
    {v}_t = \lambda_1 \operatorname{div}
    \left(\frac{|u|^{\mu}}{M^{\mu}} \nabla v\right) + (1-\lambda_1) \operatorname{div}
    \left(\frac{|v|^{\gamma}}{M^{\gamma}} \nabla v\right),
\end{equation}
which exhibits characteristics of a PME, where $0<\lambda_1<1$ and $\mu>0$, 
the diffusion {rate} is simultaneously controlled by both $u$ and $v$. 
{From the viewpoint of PDE evolution, although a rigorous 
characterization of the asymptotic behavior as $t \to \infty$ would require further 
mathematical analysis, it can be qualitatively stated that under} the coupled 
reaction-diffusion system formed by \eqref{eqn:Ours-1} and \eqref{eqn:Ours-2}, 
{$u$ gradually approaches a clearer restored image, 
while $v$ tends to a smoother function that partially reflects the structural information of $u$. 
Specifically, on the right-hand side of \eqref{eqn:Ours-2}, the slow diffusion term acting 
on $v$ itself ensures that $v$ becomes smoother over time, while
the coupled diffusion term whose coefficient depends on the current image $u$, 
locally modulates the diffusion rate of $v$ in a $u$-weighted manner. Moreover, the regularity 
of $v$ does not become $C^\infty$ immediately as in the heat 
equation (for general initial data one typically obtains at most some H\"{o}lder regularity \cite{VAZQUEZ2006}). 
In this way, $v$ receives the desired ``gentler'' regularization while gradually absorbing structural 
information from the gradually clearer $u$, which in turn provides more reliable gray-level 
indication for the restoration of $u$, facilitating} positive mutual transfer of information within the system. 
Besides, following the recommendation 
in \cite{WELK2005}, we adopt periodic boundary conditions for this system. {Henceforth, 
for an $N$-dimensional open cube $\Omega$ with side length $L>0$, we say that a measurable
function $u:\Omega \to \mathbb{R}$ is periodic on $\Omega$ if admits an
$L$-periodic extension in each coordinate direction. That is, 
there exists a measurable function $\tilde{u}:\mathbb{R}^N \to \mathbb{R}$ such 
that $\tilde{u}|_{\Omega} = u$ a.e. in $\Omega$ and 
\[\tilde{u}(x+ Le_j) = \tilde{u}(x), \quad \textrm{for~a.e.~} x \in \mathbb{R}^N, \ j=1, \dots, N,\] 
where $e_j$ denotes the $j$-th standard basis vector of $\mathbb{R}^N$.}\\[1ex]
\textbf{The proposed model.} Taking everything into account, we propose the following 
reaction-diffusion system as an image restoration model for some fixed positive $T_0$:
\begin{equation}
    \left\{
    \begin{array}{ll}
        \displaystyle \vspace{0.2em} 
        u_t = 
        \operatorname{div} \left(c\left(\left|\nabla^{\alpha} u\right|,v\right) 
        \nabla u\right) - \lambda K'(Ku-f), 
        & \textrm{in~~} (0,T_0) \times \Omega,  \\ 
        v_t = 
        \operatorname{div} \left( a(u,v) \nabla v\right)
        , & \textrm{in~~} (0,T_0) \times \Omega,   \\
        u, v \textrm{~~periodic on} \Omega & \textrm{for~~} (0,T_0), \\
        u(0,\cdot) = v(0,\cdot) = f, & \textrm{in~~} \Omega, 
    \end{array}
    \right. \label{eqn:main}
\end{equation}
where 
\[\begin{array}{c}
    c\left(\left|\nabla^{\alpha} u\right|,v\right) 
    = \dfrac{|v|^{\gamma}}{M^{\gamma}} b(\left|\nabla^{\alpha} u\right|), \quad 
    b(\left|\nabla^{\alpha} u\right|) = 
\dfrac{1}{1 + k_1 \left|\nabla^{\alpha} u\right|^{\beta}}, \\ \vspace{0.2em}
a\left(u,v\right) = 
\lambda_1 \dfrac{|u|^{\mu}}{M^{\mu}} + 
\left(1-\lambda_1\right) \dfrac{|v|^{\gamma}}{M^{\gamma}},    
\end{array}\]
$\Omega \subset \mathbb{R}^{N}$ is an $N$-dimensional open cube 
for $N \geq 1$, $K \in \mathcal{L}\left(L^1(\Omega),L^2(\Omega)\right)$, 
$M = \sup_{x \in \Omega} f(x)$, and $0<\alpha<1$, $\beta>0$, 
$\gamma>0$, $\mu>0$, $\lambda>0$, $0<\lambda_1<1$, $k_1>0$ are some given 
constants. The proposed model maintains the characteristics similar to those of 
existing models: $v$ can be regarded as a regularized version of the observed image, 
when $v$ is small, the diffusion at low gray value regions will be slow. The 
diffusion {rate} is also controlled by the texture detection 
function $b(\left|\nabla^{\alpha} u\right|)$,
which becomes small when $\left|\nabla^{\alpha} u\right|$ is large, leading to 
the protection of the textures. Our model also possesses the following advantages: 
the gentler regularization of $u$ in the gray level indicator aligns the model more 
closely with the task of recovering noisy blurred images; the model is based on a 
more classical diffusion framework, providing a \emph{clearer} physical background. 

The main contribution of this paper lies in the theoretical analysis of the proposed model.
Due to the low regularity of the diffusion coefficient in system \eqref{eqn:main},
it is challenging to establish its well-posedness using classical fixed-point methods 
as done in \cite{MAJEE2020,SHAN2019,ZHOU2018}. Therefore, our work also serves as a 
\emph{theoretical complement} to equations of the type in \cite{ZHOU2015,YAO2019,GUO2019}. 
In this paper, we employ Maximal Regularity Theory developed by 
Amann et al. \cite{AMANN1995,AMANN2004,AMANN2005,KUNSTMANN2004} to 
investigate the well-posedness of \eqref{eqn:main}. The regularity and other properties 
of solutions are also concerned. Additionally, we present a semi-implicit finite 
difference scheme for the proposed model and validate its effectiveness in image 
restoration tasks using both texture and satellite images.\\[1ex]
\textbf{Organization of the paper.} The rest of this paper is organized as follows. 
Some mathematical preliminaries and our 
main result are stated in Sect. \ref{sec2}. The main result and 
some properties of weak solutions are proven in Sect. \ref{sec3}. Regularity results 
for the proposed model are provided in Sect. \ref{sec4}. In Sect. \ref{sec5},
some numerical examples are presented to demonstrate the effectiveness of our model. 
{The concluding remarks are drawn in Sect. \ref{sec6}.}

\section{Mathematical Preliminaries and main result}\label{sec2}

In this section, we state some necessary preliminaries of the fractional gradient, 
fractional order Sobolev spaces and maximal $L^p$-regularity which will be used 
below. {For simplicity of discussion, throughout the theoretical analysis in this paper, we 
take $\Omega$ to be the normalized unit open cube $(0,1)^N$.} 
In the following we will make essential use of two types of periodic Sobolev spaces 
of fractional order. We refer to \cite{TRIEBEL1978,LEONI2023} as well as 
\cite[Section 5]{AMANN1993} and the references therein for more basic results of 
general fractional order Sobolev spaces. Let $U$ be an open set in $\mathbb{R}^N$ 
and $p \in [1,\infty)$. Denote by 
\[[f]_{W^{s,p}(U)} = \left(\int_{U} \int_{U} 
\frac{|f(y)-f(x)|^p}{|x-y|^{N+sp}} dxdy\right)^{\frac{1}{p}}\]
the Gagliardo seminorm of a measurable function $f$ in $U$ for $s \in (0,1)$.
Then for $s \in (0,\infty) \setminus \mathbb{N}$, the periodic 
Slobodeckij spaces are the Banach spaces defined by 
\[W_{\pi}^{s,p}(\Omega) = \left\{u \in W^{\lfloor s\rfloor,p}(\Omega): 
u \textrm{~is periodic on~} \Omega,\ {
\left[D^{\alpha} u\right]_{W^{s-\lfloor s\rfloor,p}(\Omega)} < \infty, 
\forall |\alpha|=\lfloor s\rfloor} \right\}\]
equipped with the norm 
\[\|u\|_{W_{\pi}^{s,p}(\Omega)} = \|u\|_{L_{\pi}^{p}(\Omega)} + 
\sum_{|\alpha|=\lfloor s\rfloor} 
\left[D^{\alpha} u\right]_{W^{s-\lfloor s\rfloor,p}(\Omega)},\] 
{where $\lfloor s\rfloor$ denotes the greatest integer less than or equal to $s$, and} 
the subscript $\pi$ indicates that we are in a periodic function space.
{We will later use Morrey's embedding theorem: for $p \in [1,\infty)$ and 
$s \in (0,\infty) \setminus \mathbb{N}$ such that $(s-\lfloor s\rfloor)p > N$, 
\[W_{\pi}^{s,p}(\Omega) \hookrightarrow C_{\pi}^{\lfloor s\rfloor,s-\lfloor s\rfloor-\frac{N}{p}}(\overline{\Omega}).\]
Readers are referred to \cite{LEONI2023} for details. 

Given $u \in L_{\pi}^1(\Omega)$, its Fourier coefficients are} 
\[{\widehat{u}}(k) = \int_{\Omega} u(x) e^{-2 \pi \mathbbm{i} k \cdot x} dx\]
{for each $k \in \mathbb{Z}^N$.} 
For $p \in (1,\infty)$ and $s \in (0,\infty)$, {we put} 
\[{\|u\|_{H_{\pi}^{s,p}(\Omega)} = \| (I - \Delta)^{\frac{s}{2}} u\|_{L_{\pi}^p(\Omega)},}\]
where 
\[{(I - \Delta)^{\frac{s}{2}} u(x) = \sum_{k \in \mathbb{Z}^N} 
\left(1+4 \pi^2 |k|^2\right)^{\frac{s}{2}} \widehat{u}(k) e^{2 \pi \mathbbm{i} k \cdot x}.}\]
Then the Bessel potential spaces are the Banach spaces defined by
\[H_{\pi}^{s,p}(\Omega) = \left\{u \in L_{\pi}^p(\Omega): 
\|u\|_{H_{\pi}^{s,p}(\Omega)} < \infty\right\}.\]
{It is known from the almost reiteration property \cite[Theorem V.1.5.3]{AMANN1995} that 
Slobodeckij spaces can be embedded into Bessel potential 
spaces (see also \cite[Theorems 1.3.3(e) and 1.10.3.2]{TRIEBEL1978}). 
Specifically, for any $\delta>0$ and $p\in(1,\infty)$, the embedding 
\begin{equation} \label{eqn:slobodeckij-to-bessel}
  W_{\pi}^{s,p}(\Omega) \hookrightarrow H_{\pi}^{s-\delta,p}(\Omega)  
\end{equation}
holds for all $s \in (\delta,\infty) \setminus \mathbb{N}$.
Furthermore, once $s-\frac{N}{p} \in (0,\infty) \setminus \mathbb{N}$, 
\begin{equation} \label{eqn:bessel-to-holder}
   H_{\pi}^{s,p}(\Omega) \hookrightarrow C_{\pi}^{\kappa,\varrho}(\overline{\Omega}) 
\end{equation}
follows from classical embedding 
theorem \cite[Theorems 2.8.1(e) and 4.6.1(e)]{TRIEBEL1978} and Rychkov's extension 
theorem \cite[Section 5.1.4]{SAWANO2018} (see also \cite[Theorem 4.1]{RYCHKOV1999}), 
where $\kappa$ and $\varrho$ denote 
the integer part and fractional part of $s-\frac{N}{p}$, respectively.}

{For problem \eqref{eqn:main} to make sense, 
the fractional gradient $\nabla^{\alpha}$ of $u$ needs to be defined. 
Although certain alternative definitions would work as well, in this paper, we use 
the frequency domain definition of the fractional gradient, consistent with those 
in \cite{BAI2007,GUIDOTTI2009,GUIDOTTI2011,YIN2016}.} For $\alpha \in (0,1)$, the 
fractional gradient {\[\nabla^{\alpha} =
(\partial_{x_1}^{\alpha}, \dots, \partial_{x_N}^{\alpha})\]}
of $u$ {can be represented by the Fourier series}
\[{\partial_{x_j}^{\alpha} u(x) = \sum_{k \in \mathbb{Z}^N} 
(2 \pi \mathbbm{i} k_j)^{\alpha} \widehat{u}(k) e^{2 \pi \mathbbm{i} k \cdot x}.}\]
Here, $\partial_{x_j}^{\alpha} u$ denotes the $j$-th component of $\nabla^{\alpha} u$, 
referred to as the partial derivative with order $\alpha$ of $u$ with respect to $x_j${, 
and $(2 \pi \mathbbm{i} k_j)^{\alpha} = (2 \pi |k_j|)^{\alpha} e^{\mathbbm{i} \alpha \frac{\pi}{2} \operatorname{sgn}(k_j)}$. 
The fractional gradient defined in this way is generally not rotation-invariant,
in contrast to some other possible definitions such as the Riesz fractional gradient (see \cite{SILHAVY2020}). 
Nevertheless, this definition has the advantage of allowing a straightforward 
characterization of the mapping properties of fractional derivatives between Bessel 
potential spaces. Moreover, it is widely employed in image processing because it is easy to
implement using fast Fourier transform \cite{BAI2007,GUIDOTTI2009}.}

We will apply Maximal Regularity Theory to the study of problem \eqref{eqn:main}. For 
more details about this theory, see \cite{AMANN1995,AMANN2004,AMANN2005,KUNSTMANN2004} and 
the references therein.
Let $E_0$ and $E_1$ be Banach spaces such that $E_1 \xhookrightarrow[]{d} E_0$, 
i.e. $E_1$ is densely embedded in $E_0$. Suppose that $1<p<\infty$. 
Set $E_{1-\frac{1}{p}} = (E_1,E_0)_{1-\frac{1}{p},p}$, 
$(\cdot,\cdot)_{\theta,p}$ denoting the standard real interpolation functor.
Given $T_0 \in (0,\infty)$, for any $T \in (0,T_0]$, put 
\[\mathcal{W}^{1,p}(0,T;(E_1,E_0)) = L^p(0,T;E_1) \cap W^{1,p}(0,T;E_0).\]
An operator ${A^{\sharp}} \in L^{\infty}(0,T;\mathcal{L}(E_1,E_0))$ is said to possess the property 
of maximal $L^p$-regularity on $[0,T)$ with respect to $(E_1,E_0)$ if the map 
\[\mathcal{W}^{1,p}(0,T;(E_1,E_0)) \to L^p(0,T;E_0) \times E_{1-\frac{1}{p}}, \quad
    u \mapsto \left(\dfrac{du}{dt} + {A^{\sharp}}u, u(0)\right)\]
is a bounded isomorphism. The set of all such operators ${A^{\sharp}}$ is denoted 
by $\mathcal{MR}^{p}(0,T;(E_1,E_0))$. Denote by $\mathcal{MR}^{p}(E_1,E_0)$ the set of
all ${A^{\flat}} \in \mathcal{L}(E_1,E_0)$ such that the map 
$[0,T) \to \mathcal{L}(E_1,E_0), t \mapsto {A^{\flat}}$
belongs to $\mathcal{MR}^{p}(0,T;(E_1,E_0))$. Let $X$ and $Y$ be metric spaces
{of functions defined on $[0,T)$. A function $f: X \to Y$ is called a Volterra map 
(or to possess the Volterra property) if}, for each $T \in (0,T_0]$ and each $u,v \in X$ with $u|_{[0,T)} = v|_{[0,T)}$, 
it follows that $f(u)|_{[0,T)} = f(v)|_{[0,T)}$.
Denote by {$\mathcal{C}^{0,1}(X,Y)$} the space of all maps
$f:X \to Y$ which are bounded on bounded sets and uniformly Lipschitz continuous on 
such sets. {We write $\mathcal{C}_{\mathrm{Volt}}^{0,1}(X,Y)$ for the subset of all Volterra
maps in $\mathcal{C}^{0,1}(X,Y)$.}
Let $Y_1$ and $Y_0$ be Banach spaces {of functions defined on $[0,T)$} such that $Y_1 \hookrightarrow Y_0$. Denote by 
$\mathcal{C}_{\mathrm{Volt}}^{0,1}(X;Y_1,Y_0)$ the set of all {Volterra} maps $g:X \to Y_0$ such 
that $g-g(0) \in \mathcal{C}^{0,1}(X,Y_1)$. {Note that the Volterra property is automatic 
for mappings that are local in time \cite{AMANN2005}. Indeed, this property is 
relevant only for problems in which nonlocalities in time are present \cite{GUIDOTTI2010}.}

Consider a quasilinear abstract Cauchy problem
\begin{equation} \label{eqn:amann-quasilinear}
    \left\{
        \begin{array}{l} 
            \dfrac{d u}{d t} + A(u)u = F(u), \quad t \in (0,T_0), \vspace{0.2em} \\ 
            u(0) = u_0. 
    \end{array} \right.
\end{equation}
A function $u \in \mathcal{W}_{\mathrm{loc}}^{1,p}(0,T;(E_1,E_0))$ 
is said to be a solution to \eqref{eqn:amann-quasilinear} on $[0,T)$ 
if it satisfies \eqref{eqn:amann-quasilinear} in the a.e. sense on $(0,T)$. A solution 
is said to be maximal if it cannot be extended to a solution on a strictly larger interval. 

The following existence and uniqueness result is applied to the study 
of problem \eqref{eqn:main}.
\begin{theorem}(See \cite[Theorem 2.1]{AMANN2005}) \label{thm:amann-main}
    Suppose that 
    \begin{enumerate}[(i)]
        \item $A \in \mathcal{C}_{\mathrm{Volt}}^{0,1}\left(\mathcal{W}^{1,p}(0,T;(E_1,E_0)),
        \mathcal{MR}^p(0,T;(E_1,E_0))\right)$;
        \item there exists $q \in (p,\infty]$ such that 
        \[F \in \mathcal{C}_{\mathrm{Volt}}^{0,1}\left(\mathcal{W}^{1,p}(0,T;(E_1,E_0)), 
        L^q(0,T;E_0), L^p(0,T;E_0)\right);\]
        \item $u_0 \in E_{1-\frac{1}{p}}$.
    \end{enumerate}
    Then there exist a maximal $T_{\max} \in (0,T_0]$ and a unique solution $u$ of 
    \eqref{eqn:amann-quasilinear} on $[0, T_{\max})$.
\end{theorem}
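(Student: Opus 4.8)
The plan is to convert the quasilinear Cauchy problem \eqref{eqn:amann-quasilinear} into a fixed-point problem for a family of \emph{linear} problems, each solved by the maximal-regularity isomorphism that is built into the definition of $\mathcal{MR}^p(0,T;(E_1,E_0))$. Abbreviate $\mathbb{E}(T) := \mathcal{W}^{1,p}(0,T;(E_1,E_0))$ and recall that, since $E_{1-1/p} = (E_1,E_0)_{1-1/p,p}$ is exactly the trace space of $\mathbb{E}(T)$, one has the continuous embedding $\mathbb{E}(T) \hookrightarrow C([0,T];E_{1-1/p})$, so that $u(0)$ is well defined and hypothesis (iii) is meaningful. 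I would fix a reference $\bar{u}\in\mathbb{E}(T)$ with $\bar{u}(0)=u_0$ (available by surjectivity of the trace) and freeze the principal part at $A_0 := A(\bar{u})$, which lies in $\mathcal{MR}^p(0,T;(E_1,E_0))$ by hypothesis (i). Moving the quasilinearity to the right-hand side, I define $\Phi(v)$ as the unique solution of the constant-operator problem
\[
\partial_t u + A_0 u = F(v) + \bigl(A_0 - A(v)\bigr)v, \qquad u(0)=u_0,
\]
which is well posed because the right-hand side lies in $L^p(0,T;E_0)$: the $F$-term does so by hypothesis (ii) together with $q>p$ and the embedding $L^q(0,T;E_0)\hookrightarrow L^p(0,T;E_0)$, and the correction term does so by hypothesis (i). At a fixed point $u=v$ the correction equals $(A_0-A(u))u$, which cancels against $A_0u$ to recover $\partial_t u + A(u)u = F(u)$; thus fixed points of $\Phi$ are precisely the solutions of \eqref{eqn:amann-quasilinear}.

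Next I would run the Banach fixed-point theorem for $\Phi$ on a closed ball around the first iterate $u_* := \Phi(\bar{u})$ inside the complete metric space $\{u\in\mathbb{E}(T): u(0)=u_0\}$. Subtracting two instances, the difference $w := \Phi(v_1)-\Phi(v_2)$ satisfies the constant-operator problem with zero initial data and right-hand side
\[
\bigl(F(v_1)-F(v_2)\bigr) + \bigl(A_0-A(v_1)\bigr)(v_1-v_2) + \bigl(A(v_2)-A(v_1)\bigr)v_2,
\]
so the maximal-regularity estimate for $A_0$ (with a constant $C_0$ fixed once $A_0$ is frozen) bounds $\|w\|_{\mathbb{E}(T)}$ by the $L^p(0,T;E_0)$-norm of this right-hand side. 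The semilinear contribution is harmless: by the uniform Lipschitz bound of hypothesis (ii) into $L^q(0,T;E_0)$ and Hölder's inequality, it is controlled by $C_F\,T^{1/p-1/q}\|v_1-v_2\|_{\mathbb{E}(T)}$, a factor that vanishes as $T\downarrow 0$.

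The main obstacle is the principal-part contribution, because the Lipschitz bound of hypothesis (i) only gives $\|A(v_1)-A(v_2)\|_{L^\infty(0,T;\mathcal{L}(E_1,E_0))}\lesssim \|v_1-v_2\|_{\mathbb{E}(T)}$, whose constant is \emph{not} made small by shrinking $T$ alone. This is the delicate heart of the argument, and it is where the full structure is used: one exploits that $\mathcal{MR}^p$ is stable under small $L^\infty(0,T;\mathcal{L}(E_1,E_0))$-perturbations (a Neumann-series/openness argument), the uniform-in-$T$ embedding constant of the \emph{zero-trace} functions $\{w\in\mathbb{E}(T): w(0)=0\}$ into $C([0,T];E_{1-1/p})$, and the continuity of $A$ at the reference, so that a judicious choice of the ball radius $r$ and then of $T$ keeps $\|A(v)-A_0\|_{L^\infty(0,T;\mathcal{L}(E_1,E_0))}$ small on the ball and forces both the self-mapping property and a contraction constant below $1$. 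The Banach fixed-point theorem then yields a unique solution on some interval $[0,T_1)$.

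Finally I would upgrade this local solution to a unique maximal one. Local uniqueness follows from the contraction estimate, and the Volterra property (condition (ii) in the definition of $\mathcal{C}_{\mathrm{Volt}}^{0,1}$) guarantees that two solutions coinciding on $[0,\tau)$ feed identical inputs into $A$ and $F$, so they cannot bifurcate; this promotes local uniqueness to uniqueness on every common interval. A standard continuation argument—restart the fixed-point construction from the terminal value, glue the pieces using the Volterra property, and take the supremum of existence times—then produces a maximal $T_{\max}\in(0,T_0]$ together with the unique solution $u$ of \eqref{eqn:amann-quasilinear} on $[0,T_{\max})$, which completes the proof.
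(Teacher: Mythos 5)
This statement is not proved in the paper at all: it is quoted from Amann (Theorem 2.1 of \cite{AMANN2005}) and used as a black box in the proofs of Theorems \ref{thm:well-posedness-weak} and \ref{thm:well-posedness-strong}. So there is no in-paper proof to compare against; the relevant benchmark is Amann's own argument in the cited reference, which, like yours, is a Banach fixed-point argument built on the maximal-regularity isomorphism together with the Volterra property to glue local solutions into a maximal one. Your variant is the frozen-coefficient (Cl\'ement--Li style) scheme: solve the linear problem with the fixed family $A_0=A(\bar u)$ and shift $(A_0-A(v))v$ to the right-hand side. Your decomposition of $\Phi(v_1)-\Phi(v_2)$ is algebraically correct, the H\"older gain $T^{1/p-1/q}$ is exactly the reason hypothesis (ii) demands $q>p$, and fixed points of $\Phi$ are indeed the solutions of \eqref{eqn:amann-quasilinear}, so the architecture is sound and faithful to how such theorems are actually proved.

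Three caveats. First, calling the problem with $A_0=A(\bar u)$ a ``constant-operator problem'' is a misnomer: $A(\bar u)$ is a time-dependent family in $\mathcal{MR}^p(0,T;(E_1,E_0))$; nothing in your argument uses constancy, only the isomorphism, so this is terminological. Second, in your frozen scheme the Neumann-series/openness of $\mathcal{MR}^p$ that you invoke is not actually needed (you never solve with $A(v)$ itself); what you do need, and only gesture at, is (a) that the maximal-regularity constant of $A_0$ for zero-trace data can be taken uniform in $T\leqslant T_0$ (extend the right-hand side by zero to $[0,T_0]$, solve, restrict), and (b) that the factors multiplying the Lipschitz constant of $A$ in your two principal-part terms are $\|u_*\|_{L^p(0,T;E_1)}+r$ and $\|\bar u-u_*\|_{\mathbb{E}(T)}+r$, i.e.\ restriction norms of fixed functions plus the ball radius, both of which can be driven below any threshold by choosing $r$ small and then $T$ small. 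Since this is the heart of the theorem, a complete write-up must carry this computation out rather than name its ingredients; as written it is an outline of the key step, not a proof of it. Third, the continuation argument needs the Volterra structure to even \emph{define} the restarted problem on $[T_1,T_0)$ (the operators applied to the continued function must only depend on its past), and full uniqueness requires the standard ``largest coincidence time'' argument, not just uniqueness within the contraction ball; you point at both correctly but leave them at the level of assertion. None of these is a fatal flaw: they are the places where a referee would ask you to fill in details.
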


{Henceforth, we always suppose that $0 < T \leq T_0$.} Reformulating problem \eqref{eqn:main} as a weak $L^p$-formulation, the weak solution 
of \eqref{eqn:main} can be defined as follows.

\begin{definition} \label{def:main-weak}
    {Let $p>2$.} Given $f \in W_{\pi}^{1-\frac{2}{p},p}(\Omega)$, a couple of 
    functions $(u,v)$ is called a weak solution to problem \eqref{eqn:main}  
    on $[0,T)$, if it satisfies the following conditions:
    \begin{enumerate}[(i)]
        \item $u, v \in L_{\mathrm{loc}}^p\left(0,T;W_{\pi}^{1,p}(\Omega)\right)   
        \cap W_{\mathrm{loc}}^{1,p}\left(0,T;W_{\pi}^{-1,p}(\Omega)\right)$;
        \item $u(0,\cdot)=v(0,\cdot)=f$; 
        \item for any $\varphi, \psi \in W_{\pi}^{1,p'}(\Omega)$, 
        the following integral equalities hold:
        \begin{equation} \label{eqn:main-weak}
        \begin{array}{c} 
        \displaystyle \int_{\Omega} u_t \varphi d x + 
        \int_{\Omega} c\left(\left|\nabla^{\alpha} u\right|,v\right)
        \nabla u \cdot \nabla \varphi d x 
        + \lambda \int_{\Omega} (Ku-f) K \varphi d x = 0, \\[1em]
        \displaystyle \int_{\Omega} v_t \psi d x 
        + \int_{\Omega} a\left(u, v\right) \nabla v \cdot \nabla \psi d x = 0,
        \end{array}
        \end{equation}
        for almost all $t \in (0,T)$. {Here $\frac{1}{p} + \frac{1}{p'} =1$.}
    \end{enumerate}
\end{definition}

{We will reduce the study of weak solutions to \eqref{eqn:main} to that of solutions to 
a quasilinear abstract Cauchy problem of the form \eqref{eqn:amann-quasilinear}.} The main result is stated as follows.

\begin{theorem} \label{thm:well-posedness-weak}
Assume that $p \in \left(\frac{2+N}{1-\alpha}, \infty\right)$ and $\beta \geq 1$. 
Let $f \in W_{\pi}^{1-\frac{2}{p},p}(\Omega)$ {with $f \geq 0$ and 
bounded away from zero a.e. in $\Omega$.} Then there exist a unique 
maximal $T_{\max} \in (0,T_0]$ and a unique weak solution $(u, v)$ of
problem \eqref{eqn:main} on $[0, T_{\max})$.
\end{theorem}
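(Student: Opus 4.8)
The plan is to recast \eqref{eqn:main} as a quasilinear abstract Cauchy problem of the form \eqref{eqn:amann-quasilinear} and apply Theorem \ref{thm:amann-main}. I would work in the product spaces $E_0 = W_\pi^{-1,p}(\Omega) \times W_\pi^{-1,p}(\Omega)$ and $E_1 = W_\pi^{1,p}(\Omega) \times W_\pi^{1,p}(\Omega)$, for which the trace space $E_{1-1/p}$ coincides with $(W_\pi^{1-2/p,p}(\Omega))^2$; this is exactly the regularity imposed on the datum $f$, so hypothesis (iii) holds with $u_0 = (f,f)$ and the abstract solution class $\mathcal{W}^{1,p}(0,T;(E_1,E_0))$ reproduces the weak-solution class of Definition \ref{def:main-weak}. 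Writing $U = (u,v)$, I would set
\[ A(U)\begin{pmatrix}w_1\\ w_2\end{pmatrix} = \begin{pmatrix} -\operatorname{div}\!\big(c(|\nabla^\alpha u|,v)\nabla w_1\big) \\ -\operatorname{div}\!\big(a(u,v)\nabla w_2\big)\end{pmatrix}, \qquad F(U) = \begin{pmatrix} -\lambda K'(Ku-f) \\ 0 \end{pmatrix}, \]
so that the coupling enters only through the frozen coefficients while the principal part is block-diagonal, and the affine, nonlocal reaction is relegated to $F$.

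The first substantive step is to convert the degenerate coefficients into uniformly elliptic, globally Lipschitz ones. Since $|v|^\gamma/M^\gamma$ vanishes where $v=0$ and $b(|\nabla^\alpha u|)\to 0$ as $|\nabla^\alpha u|\to\infty$, the frozen operator is not uniformly elliptic for arbitrary $U$. I would therefore replace the gray-level nonlinearities by truncated versions that agree with the originals on the a priori range of the solution, are bounded between positive constants, and are globally Lipschitz (smoothing near the origin when an exponent is $<1$ so the corresponding Nemytskii maps become Lipschitz). For this truncated system I would verify the hypotheses of Theorem \ref{thm:amann-main}. Hypothesis (ii) is the easiest: $K\in\mathcal{L}(L^1(\Omega),L^2(\Omega))$ gives $K'K\in\mathcal{L}(L^1(\Omega),L^\infty(\Omega))$, so $F$ is affine with values in $L^\infty(0,T;E_0)$ and is Volterra-Lipschitz with $q=\infty>p$.

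Hypothesis (i) is the crux and is where the two structural assumptions are used. Elements of $\mathcal{W}^{1,p}(0,T;(E_1,E_0))$ embed into $C([0,T];W_\pi^{1-2/p,p}(\Omega))$, and since $p>\tfrac{2+N}{1-\alpha}$ we have $1-\tfrac{2}{p}-\alpha>\tfrac{N}{p}$, so $\nabla^\alpha$ maps $W_\pi^{1-2/p,p}(\Omega)$ continuously into $C(\bar\Omega)$; hence $(t,x)\mapsto\nabla^\alpha u$ is continuous and $U\mapsto b(|\nabla^\alpha u|)$ is bounded into $L^\infty(0,T;C(\bar\Omega))$. The assumption $\beta\geqslant 1$ makes $s\mapsto b(s)$ Lipschitz, whence $U\mapsto b(|\nabla^\alpha u|)$, and likewise the truncated $v$- and $u$-nonlinearities, are Lipschitz on bounded sets; multiplication then shows $U\mapsto A(U)$ is bounded and Lipschitz on bounded sets into $L^\infty(0,T;\mathcal{L}(E_1,E_0))$, and it is Volterra because the coefficients are evaluated pointwise in time. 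For each fixed $U$ the coefficients are continuous in $x$ and uniformly elliptic, so each diagonal block $-\operatorname{div}(\kappa\nabla\,\cdot\,)\colon W_\pi^{1,p}\to W_\pi^{-1,p}$ enjoys maximal $L^p$-regularity (the $(W^{1,p},W^{-1,p})$-realization of a uniformly elliptic divergence-form operator with continuous coefficients on the torus), and the block-diagonal operator inherits it, giving $A(U)\in\mathcal{MR}^p(0,T;(E_1,E_0))$. Theorem \ref{thm:amann-main} then yields a unique maximal solution of the truncated problem.

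Finally I would remove the truncation by an a priori maximum-principle argument. For the $v$-equation there is no source and $a(u,v)\geqslant 0$, so a parabolic comparison gives $0<\min_{\bar\Omega}f\leqslant v\leqslant M$, placing $v$ in the range where the $v$-truncation is inactive. For $u$, a comparison/Gronwall estimate controlling the nonlocal term $\lambda K'(Ku-f)$ yields a finite $L^\infty(\Omega)$-bound on $[0,T_0]$, deactivating the upper $u$-truncation. Consequently the truncated coefficients coincide with the genuine ones along the constructed solution, so $(u,v)$ solves \eqref{eqn:main}, and uniqueness and maximality transfer from the truncated problem. I expect the genuine difficulties to be twofold: establishing maximal regularity of the frozen divergence-form operator with only continuous coefficients (which is why the sharp embedding threshold $p>\tfrac{2+N}{1-\alpha}$ and $\beta\geqslant1$ are forced), and producing the a priori bounds that justify removing the truncation — the latter being delicate for $u$ because the reaction term $\lambda K'(Ku-f)$ is nonlocal and not sign-definite, so the usual pointwise maximum principle must be replaced by an energy/comparison argument.
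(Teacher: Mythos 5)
Your proposal follows the same skeleton as the paper's proof: the spaces $E_0=W_{\pi}^{-1,p}(\Omega)^2$, $E_1=W_{\pi}^{1,p}(\Omega)^2$ with trace space $W_{\pi}^{1-\frac{2}{p},p}(\Omega)^2$, the block-diagonal quasilinear operator with the nonlocal reaction placed in $F$, and the appeal to Theorem \ref{thm:amann-main}. The problems lie in the two places where you deviate from, or gloss over, what the paper actually does.

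First, the truncation scheme. The paper truncates only $v$, and only from below: $\bar v_{\varepsilon}=\max\{v,\varepsilon\}$ with $\varepsilon=\inf_{\Omega}f>0$ (positive because $f>0$ and $W_{\pi}^{1-\frac{2}{p},p}(\Omega)$ embeds into a H\"older space). Nothing else needs modification: on any trajectory $w\in\mathcal{W}^{1,p}(0,T;(E_1,E_0))$, the embedding \eqref{eqn:amann-embedding} combined with $p>\frac{2+N}{1-\alpha}$ makes $\nabla^{\alpha}u$ and $v$ bounded continuous functions on $[0,T]\times\overline{\Omega}$, so the coefficients are automatically bounded above and $b(|\nabla^{\alpha}u|)$ is bounded below by a trajectory-dependent positive constant; Theorem \ref{thm:amann-main} asks only for boundedness and Lipschitz continuity on \emph{bounded sets}, not globally, so your global truncations are unnecessary. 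More seriously, part of your scheme cannot be undone. The lower truncation of $v$ is removed by the minimum principle precisely because the $v$-equation has no source; but your smoothing of $|u|^{\mu}$ near the origin (needed when $\mu<1$) can never be deactivated, because the $u$-equation carries the nonlocal, sign-indefinite source $-\lambda K'(Ku-f)$, so $u$ obeys no minimum principle for general $K$ and may vanish. Along the constructed solution the smoothed coefficient then disagrees with $|u|^{\mu}/M^{\mu}$, and what you have solved is a modified system, not \eqref{eqn:main}. (You correctly sensed that $s\mapsto|s|^{\mu}$ fails to be Lipschitz near $0$ for $\mu<1$ --- a delicacy the paper's estimate of $I_2$ passes over silently --- but the proposed fix does not close the loop.) Your upper truncation of the $u$-dependence is removable in principle, but only after proving an $L^{\infty}$ bound for $u$ on the truncated problem --- essentially the Moser estimate \eqref{prop:l-infty-estimate} --- which your sketch asserts via an unspecified ``comparison/Gronwall'' argument rather than establishes; the paper's route needs no such bound at all for well-posedness.

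Second, the maximal regularity step, which you yourself call the crux. You claim each frozen block $-\operatorname{div}(\kappa\nabla\,\cdot\,)\colon W_{\pi}^{1,p}(\Omega)\to W_{\pi}^{-1,p}(\Omega)$ has maximal $L^p$-regularity because $\kappa$ is continuous and uniformly elliptic, with no supporting result. Since $W_{\pi}^{-1,p}(\Omega)$ is not a Hilbert space, sectoriality (generation of an analytic semigroup) does not by itself give maximal $L^p$-regularity, so some quantitative smoothness of $\kappa$ must be invoked: the paper's point is that $p>\frac{2+N}{1-\alpha}$ yields H\"older --- not merely uniform --- continuity of $\nabla^{\alpha}u$ and of $\bar v_{\varepsilon}^{\gamma}$ via \eqref{eqn:fractional-differential-map}, after which the weak generation theorem \cite[Theorem 8.5]{AMANN1993} and the H\"older-coefficient maximal regularity observation of \cite[Section 13]{KUNSTMANN2004} apply. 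You also conflate maximal regularity of $A(U(t))$ at each frozen time $t$ with membership of the time-dependent operator in $\mathcal{MR}^p(0,T;(E_1,E_0))$; the paper bridges this using the continuity of $t\mapsto A(w(t))$ in $\mathcal{L}(E_1,E_0)$ together with \cite[Theorem 7.1]{AMANN2004}, a step absent from your argument. These two defects are repairable within your own framework (the embedding you already use delivers the H\"older continuity for free), but the truncation-removal failure for $\mu<1$ is structural and would need the paper's lighter truncation strategy to avoid.
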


{The proof of Theorem \ref{thm:well-posedness-weak} is carried out within the framework of} Maximal Regularity Theory, 
incorporating the techniques {developed in} \cite{GUIDOTTI2009, GUIDOTTI2010, GUIDOTTI2011}, 
{with details} given in the next section.

\section{Local well-posedness and some properties of weak solutions}\label{sec3}

In this section, we first establish the local well-posedness of 
problem \eqref{eqn:main} via maximum regularity.
For all $\varepsilon >0$, let
$\overline{v}_{\varepsilon} = \max \{v, \varepsilon\}$. We consider the following 
auxiliary problem:
\begin{equation}
    \left\{
    \begin{array}{ll}
        \displaystyle \vspace{0.2em} 
        u_t = 
        \operatorname{div} \left(
        c\left(\left|\nabla^{\alpha} u\right|, \overline{v}_{\varepsilon}\right) \nabla u\right) 
        - \lambda K'(Ku-f), & \textrm{in~~} (0,T_0) \times \Omega,  \\ 
        v_t = 
        \operatorname{div} \left(a\left(u, \overline{v}_{\varepsilon}\right)
         \nabla v\right), & \textrm{in~~} (0,T_0) \times \Omega,   \\
        u, v \textrm{~~periodic on} \Omega & \textrm{for~~} (0,T_0), \\
            u(0,\cdot) = v(0,\cdot) = f, & \textrm{in~~} \Omega. 
    \end{array}
    \right. \label{eqn:auxiliary}
\end{equation}

Similar to Definition \ref{def:main-weak}, we define the weak solution to 
the auxiliary problem \eqref{eqn:auxiliary}.
\begin{definition}
    {Let $p>2$.} Given $f \in W_{\pi}^{1-\frac{2}{p},p}(\Omega)$, a couple of 
    functions $(u,v)$ is called a weak solution to
    auxiliary problem \eqref{eqn:auxiliary}  
    on $[0,T)$, if it satisfies the following conditions:
    \begin{enumerate}[(i)]
        \item $u, v \in L_{\mathrm{loc}}^p\left(0,T;W_{\pi}^{1,p}(\Omega)\right) \cap 
        W_{\mathrm{loc}}^{1,p}\left(0,T;W_{\pi}^{-1,p}(\Omega)\right)$;
        \item $u(0,\cdot)=v(0,\cdot)=f$; 
        \item for any $\varphi, \psi \in W_{\pi}^{1,p'}(\Omega)$, 
        the following integral equalities hold:
        \begin{subequations}
        \begin{gather}
        \displaystyle \int_{\Omega} u_t \varphi d x + 
        \int_{\Omega} c\left(\left|\nabla^{\alpha} u\right|, \overline{v}_{\varepsilon}\right)
        \nabla u \cdot \nabla \varphi d x + \lambda \int_{\Omega} (Ku-f) K \varphi d x = 0
        , \label{eqn:auxiliary-weak-1} \\
        \displaystyle \int_{\Omega} v_t \psi d x 
        + \int_{\Omega} a\left(u, \overline{v}_{\varepsilon}\right)
        \nabla v \cdot \nabla \psi d x = 0, \label{eqn:auxiliary-weak-2}
        \end{gather}
        \end{subequations}
        for almost all $t \in (0,T)$.
    \end{enumerate}
\end{definition}

{
\begin{remark} 
If $(u,v)$ is a weak solution to \eqref{eqn:auxiliary} on $[0,T)$, 
it then follows that $v$ satisfies the minimum principle 
\begin{equation} \label{eqn:auxiliary-minimum}
v(t,x) \geq \underset{x \in \Omega}{\operatorname*{ess} \inf}~f(x) \eqqcolon \gamma_{f} 
\end{equation}
holds for every $t\in(0,T)$ and a.e. $x \in \Omega$, as long as $\gamma_f>-\infty$. 
\eqref{eqn:auxiliary-minimum} can be immediately proved by Stampacchia's truncation 
technique. Namely, one may take $\psi=-(v-\gamma_f)_-$ in \eqref{eqn:auxiliary-weak-2}, 
where $r_- \coloneqq \max\{-r,0\}$ for $s \in \mathbb{R}$. 
Indeed, for a.e. $t \in (0,T)$, $-(v(t,\cdot)-\gamma_f)_-$ belongs to 
$W_{\pi}^{1,p}(\Omega) \hookrightarrow W_{\pi}^{1,p'}(\Omega)$ (note that $p > 2$), making $\psi = -(v-\gamma_f)_-$ an 
admissible test function. The proof is standard and is therefore omitted. \label{rmk:auxiliary}
\end{remark}

Hereafter, we will abbreviate $\operatorname*{ess}\inf$ (resp., $\operatorname*{ess}\sup$) 
as $\inf$ (resp., $\sup$). Let $B_X$ denote the unit ball in the Banach space $X$, and 
write $B_X(x,R)\coloneqq x+RB_X$ for the ball centered at $x$ with radius $R>0$. 
We now present the proof of Theorem \ref{thm:well-posedness-weak}.}

\begin{proof}[\bf Proof of Theorem \ref{thm:well-posedness-weak}]
Firstly, we reformulate problem \eqref{eqn:auxiliary} as a quasilinear abstract 
Cauchy problem. {Since $p \in \left(\frac{2+N}{1-\alpha}, \infty\right) \subset \left(2+N, \infty\right)$, 
it follows from Morrey's embedding theorem together with the assumptions 
on $f$ that} 
{\[f\in C_{\pi}^{0, 1-\frac{2+N}{p}}(\overline{\Omega}) \quad 
\textrm{and} \quad \inf_{x \in \Omega} f(x) > 0.\]} In this section, we always choose
\[E_0 = W_{\pi}^{-1,p}(\Omega)^2, \quad E_1 = W_{\pi}^{1,p}(\Omega)^2.\]
For this choice it is well-known that $E_1 \xhookrightarrow[]{d} E_0$. We denote by 
\[E_{1-\frac{1}{p}} = \left(E_0, E_1\right)_{1-\frac{1}{p},p} 
= W_{\pi}^{1-\frac{2}{p},p}(\Omega)^2\]
the respective real interpolation space. To simplify the notation, we set
\[E_0^{1/2} = W_{\pi}^{-1,p}(\Omega), \quad E_1^{1/2} = W_{\pi}^{1,p}(\Omega), \quad
E_{1-\frac{1}{p}}^{1/2} = W_{\pi}^{1-\frac{2}{p},p}(\Omega).\]
For fixed $w_0=(u_0,v_0)^{\mathrm{T}} \in \mathcal{W}^{1,p}(0,T;(E_1,E_0))$, let a bilinear 
form $a_{11}[w_0](\cdot,\cdot)$ be given by
\[a_{11}[w_0](u,\varphi) \coloneqq \int_{\Omega} 
c\left(\left|\nabla^{\alpha} u_0\right|, \overline{v_0}_{\varepsilon}\right) 
\nabla u \cdot \nabla \varphi d x, \quad u \in W_{\pi}^{1,p}(\Omega),
\varphi \in W_{\pi}^{1,p'}(\Omega).\]
Then a linear differential operator $A_{11}(w_0)$ is
naturally induced by $a_{11}[w_0]$, i.e. 
for almost all $t \in (0,T)$,
\[\left \langle A_{11}(w_0)u, \varphi \right \rangle 
_{\left(W_{\pi}^{-1,p}(\Omega),W_{\pi}^{1,p'}(\Omega)\right)} = a_{11}[w_0](u,\varphi)
, \quad \forall \varphi \in W_{\pi}^{1,p'}(\Omega),.\]
Actually, we have already defined an operator
\[A_{11} : \mathcal{W}^{1,p}(0,T;(E_1,E_0)) 
\to L^{\infty} \left(0,T;\mathcal{L}\left(E_1^{1/2},E_0^{1/2}\right)\right), 
\quad w \mapsto A_{11}(w).\]
In the same way, for fixed 
$w_0=(u_0,v_0)^{\mathrm{T}} \in \mathcal{W}^{1,p}(0,T;(E_1,E_0))$, let 
$a_{22}[w_0](\cdot,\cdot)$ be given by
\[a_{22}[w_0](v,\psi) \coloneqq \int_{\Omega} 
a\left(u, \overline{v}_{\varepsilon}\right)
\nabla v \cdot \nabla \psi d x, \quad v \in W_{\pi}^{1,p}(\Omega),
\psi \in W_{\pi}^{1,p'}(\Omega),\]
then we can define $A_{22}(w_0)$ by
\[\left \langle A_{22}(w_0)v, \psi \right \rangle 
_{\left(W_{\pi}^{-1,p}(\Omega),W_{\pi}^{1,p'}(\Omega)\right)}
= a_{22}[w_0](v,\psi), \quad 
\forall \psi \in W_{\pi}^{1,p'}(\Omega)\] 
for almost all $t \in (0,T)$, and introduce another operator
\[A_{22} : \mathcal{W}^{1,p}(0,T;(E_1,E_0)) 
\to L^{\infty} \left(0,T;\mathcal{L}\left(E_1^{1/2},E_0^{1/2}\right)\right), 
\quad w \mapsto A_{22}(w).\]
Now we define the operator matrix $A$ given by
\[A: \mathcal{W}^{1,p}(0,T;(E_1,E_0)) 
\to L^{\infty} \left(0,T;\mathcal{L}\left(E_1,E_0\right)\right), \quad
w \mapsto  \left(\begin{matrix}
    A_{11}(w) & 0 \\
    0         & A_{22}(w)
\end{matrix}\right).\]
For fixed $w=(u,v)^{\mathrm{T}} \in \mathcal{W}^{1,p}(0,T;(E_1,E_0))$, we define 
\begin{equation} \label{eqn:source-matrix}
F: \mathcal{W}^{1,p}(0,T;(E_1,E_0)) 
\to L^p \left(0,T;E_0\right), \quad
w \mapsto \left(\begin{matrix}
    -\lambda K'(Ku-f)  \\
    0      
\end{matrix}\right).   
\end{equation}
Then problem \eqref{eqn:auxiliary} can be written as a quasilinear abstract 
Cauchy problem
\begin{equation} \label{eqn:quasilinear-abstract}
    \left\{
        \begin{array}{l} 
            \dfrac{d w}{d t} + A(w)w = F(w), \quad t \in (0,T_0), \vspace{0.2em} \\ 
            w(0) = (f,f)^{\mathrm{T}}, 
    \end{array} \right.
\end{equation}

Let us now see that the linearized operator matrix has the property 
of maximal $L^p$-regularity. It is known that $\partial^{\alpha}_{x_j}$ has 
the mapping property between Bessel potential spaces \cite{GUIDOTTI2009,GUIDOTTI2011}, 
i.e. for any $s \in (\alpha, 1)$,
\[\partial^{\alpha}_{x_j} \in 
\mathcal{L}\left(H^{s,p}_{\pi}(\Omega), H^{s-\alpha,p}_{\pi}(\Omega)\right), 
\quad j = 1,2, \cdots, N.\]
{By combining embeddings \eqref{eqn:slobodeckij-to-bessel} and \eqref{eqn:bessel-to-holder} we know} that  
when $(s-\alpha)p>N$, for any $\delta \in \left(0, s-\alpha-\frac{N}{p}\right)$, 
\begin{equation} \label{eqn:fractional-differential-map}
\partial^{\alpha}_{x_j} \in \mathcal{L}\left(W^{s,p}_{\pi}(\Omega), 
C_{\pi}^{s-\alpha-\frac{N}{p}-\delta}(\overline{\Omega})\right), \quad j = 1,2, \cdots, N.
\end{equation}
Taking $s = 1-\frac{2}{p}$ in \eqref{eqn:fractional-differential-map} and using 
embedding theorem \cite[Theorem III.4.10.2]{AMANN1995}
\begin{equation} \label{eqn:amann-embedding}
\mathcal{W}^{1,p}\left(0,T;\left(E_1^{1/2},E_0^{1/2}\right)\right) \hookrightarrow 
C\left([0,T],E_{1-\frac{1}{p}}^{1/2}\right)
\end{equation}
we know that for any $u \in \mathcal{W}^{1,p}\left(0,T;\left(E_1^{1/2},E_0^{1/2}\right)\right)$,
there exists $\rho = 1-\alpha-\frac{N+2}{p}-\delta \in (0,1)$ such that 
\[\left|\nabla^{\alpha}u\right|
\in C\left([0,T],C^{0,\rho}_{\pi}(\overline{\Omega})\right).\]
Since $k_1>0$ and $\beta \geq 1$, the function $b \in BUC^{\infty}[0,\infty)$. 
It follows from the property of Nemytskii operators \cite[Theorem 3.1]{CHIAPPINELLI1995} that
\[g \mapsto b^{(i)}(g), \quad C^{0,\rho_0}(\overline{\Omega}) \to C^{0,\rho_0}(\overline{\Omega}), \quad i = 0,1\]
is bounded and uniformly Lipschitz continuous on bounded sets for any $\rho_0 \in (0, 1)$. Therefore,
\begin{equation} \label{eqn:regular-texture}
b\left(\left|\nabla^{\alpha}u\right|\right) \in 
C\left([0,T],C^{0,\rho}_{\pi}(\overline{\Omega})\right).
\end{equation}
For any $v \in \mathcal{W}^{1,p}\left(0,T;\left(E_1^{1/2},E_0^{1/2}\right)\right)$, by 
\eqref{eqn:amann-embedding} again we know that 
\begin{equation} \label{eqn:regular-graylevel-1}
    \frac{\overline{v}_{\varepsilon}^{\gamma}}{M^{\gamma}} \in 
C\left([0,T],C^{0,\gamma\left(1-\frac{2+N}{p}\right)}_{\pi}(\overline{\Omega})\right).
\end{equation}
Let $\rho_1 = \min \left\{\rho, \min\{1,\gamma\}\left(1-\frac{2+N}{p}\right)\right\}$, 
then $\rho_1 \in (0,1)$. Combining 
\eqref{eqn:regular-texture} with \eqref{eqn:regular-graylevel-1}, it holds that 
\begin{equation} \label{eqn:holder-continuity-1}
c\left(\left|\nabla^{\alpha} u\right|, \overline{v}_{\varepsilon}\right) 
= \frac{\overline{v}_{\varepsilon}^{\gamma}}{M^{\gamma}} b(\left|\nabla^{\alpha} u\right|)
\in C\left([0,T],C^{0,\rho_1}_{\pi}(\overline{\Omega})\right)
\end{equation}
since H{\"o}lder spaces {on $\overline{\Omega}$} are algebras. It follows that 
\begin{equation}  \label{eqn:continuous-weak-1}
    \left[t \mapsto A_{11}(w(t)) \right] \in 
    C\left([0,T], \mathcal{L}\left(E_1^{1/2},E_0^{1/2}\right)\right)
\end{equation}
for any $w=(u,v)^{\mathrm{T}} \in \mathcal{W}^{1,p}(0,T;(E_1,E_0))$. 
On the other hand, {for each fixed $t \in (0,T)$, the principle symbol of 
$-A_{11}(w(t)) \eqqcolon \mathbf{A}_{11}^t$ is} 
\[{\mathcal{A}_{11,\pi}^t(x,\xi) \coloneqq
    c\left(\left|\nabla^{\alpha} u(t)\right|, \overline{v}_{\varepsilon}(t)\right) |\xi|^2, 
    \quad x \in \Omega, \ \xi \in \mathbb{R}^N \setminus \{0\}.}\] 
From \eqref{eqn:holder-continuity-1}, it follows that there exists
 a constant $C_1>1$ such that for all $x \in \overline{\Omega}$ and $|\xi|=1$,
\begin{equation} \label{eqn:strongly-elliptic-1}
0< \frac{1}{C_1} < {\mathcal{A}_{11,\pi}^t(x,\xi)} < C_1,
\end{equation} 
{which implies that 
$\sigma(\mathcal{A}_{11,\pi}^t) \subset \left\{\lambda \in \mathbb{C}: \mathrm{Re} \lambda > 0 \right\}$
for all $x \in \overline{\Omega}$ and $|\xi|=1$. Therefore, for every $t \in (0,T)$, $-A_{11}(w(t))$ is 
normally elliptic in the sense of Amann \cite{AMANN1990,AMANN1993}.} 
It follows from \eqref{eqn:continuous-weak-1} 
and \eqref{eqn:strongly-elliptic-1} and the weak generation 
theorem \cite[Theorem 8.5]{AMANN1993} that for 
every $t \in (0,T)$, $-A_{11}(w(t))$ is sectorial in $E_0^{1/2}$. 
To obtain the maximal $L^p$-regularity of $A_{11}(w(t))$, 
as pointed out in \cite[Section 13]{KUNSTMANN2004}, the only 
requirement in the end is sufficient regularity of the coefficients, with H{\"o}lder 
continuity being adequate. Thus, for every $t \in (0,T)$,
\begin{equation} \label{eqn:maximal-regularity-1}
    A_{11}(w(t)) \in \mathcal{MR}^p\left(E_1^{1/2},E_0^{1/2}\right).
\end{equation} 
Similarly, letting $\rho_2 = \min\{\gamma, \mu, 1\} \left(1-\frac{2+N}{p}\right)$, 
then we have $\rho_2 \in (0,1)$. For any 
$u \in \mathcal{W}^{1,p}\left(0,T;\left(E_1^{1/2},E_0^{1/2}\right)\right)$,
using \eqref{eqn:amann-embedding} again we deduce that
\begin{equation} \label{eqn:regular-graylevel-2}
    a\left(u,\overline{v}_{\varepsilon}\right) = 
    \lambda_1 \frac{|u|^{\mu}}{M^{\mu}} + 
    \left(1-\lambda_1\right) \frac{\overline{v}_{\varepsilon}^{\gamma}}{M^{\gamma}} \in 
    C\left([0,T],C^{0,\rho_2}_{\pi}(\overline{\Omega})\right)
    \end{equation}
and that
\begin{equation} \label{eqn:continuous-weak-2}
    \left[t \mapsto A_{22}(w(t)) \right] \in 
    C\left([0,T], \mathcal{L}\left(E_1^{1/2},E_0^{1/2}\right)\right)
\end{equation}
for any $w=(u,v)^{\mathrm{T}} \in \mathcal{W}^{1,p}(0,T;(E_1,E_0))$.  
{For each fixed $t \in (0,T)$, the principle symbol of 
$-A_{22}(w(t)) \eqqcolon \mathbf{A}_{22}^t$ is} 
\[{\mathcal{A}_{22,\pi}^t(x,\xi) = 
    a\left(u(t),\overline{v}_{\varepsilon}(t)\right) |\xi|^2, 
    \quad x \in \Omega, \ \xi \in \mathbb{R}^N \setminus \{0\}.}\]
There also exists a constant $C_2>1$ such that for all $x \in \overline{\Omega}$ and $|\xi|=1$,
\begin{equation} \label{eqn:strongly-elliptic-2}
0< \frac{1}{C_2} < {\mathcal{A}_{22,\pi}^t(x,\xi)} < C_2,
\end{equation}
{which ensures that $-A_{22}(w(t))$ is normally elliptic for every $t \in (0,T)$. 
Combining \eqref{eqn:regular-graylevel-2} and \eqref{eqn:strongly-elliptic-2} similarly yields}
\begin{equation} \label{eqn:maximal-regularity-2}
    A_{22}(w(t)) \in \mathcal{MR}^p\left(E_1^{1/2},E_0^{1/2}\right)
\end{equation} 
for every $t \in (0,T)$. It follows from \eqref{eqn:maximal-regularity-1} and 
\eqref{eqn:maximal-regularity-2} that for all $t \in (0,T)$ and 
each $G \in L^p(0,T)$, the linear abstract Cauchy problem 
\[\left\{
    \begin{array}{l} 
        \dfrac{d z}{d \tau} + A(w(t))z = G(\tau), \quad \tau \in (0,T), \vspace{0.2em} \\ 
        z(0) = (0,0)^{\mathrm{T}}
\end{array} \right.\] 
has exactly one strong $L^p$ solution on $(0,T)$, that is 
\begin{equation} \label{eqn:maximal-regularity-each-t}
A(w(t)) \in \mathcal{MR}^p\left(E_1,E_0\right).
\end{equation} 
Using \cite[Theorem 7.1]{AMANN2004} in combination with \eqref{eqn:continuous-weak-1},
\eqref{eqn:continuous-weak-2} and \eqref{eqn:maximal-regularity-each-t} we obtain that  
\[A(w) \in \mathcal{MR}^p(0,T;(E_1,E_0))\]
for any $w \in \mathcal{W}^{1,p}(0,T;(E_1,E_0))$. 

Next, we establish Lipschitz estimates of the operator matrix. For $R>0$, take $w_1=(u_1,v_1)^{\mathrm{T}}, w_2=(u_2,v_2)^{\mathrm{T}} 
 \in B_{\mathcal{W}^{1,p}(0,T;(E_1,E_0))}(0,R)$, then 
\begin{align*}
    & \|A(w_1)-A(w_2)\|_{L^{\infty}(0,T;\mathcal{L}(E_1,E_0))} \\ 
    = \, & \sup_{t \in [0,T]} 
    \sup_{\substack{w=(u,v)^{\mathrm{T}} \\ \|w\|_{E_1}=1}} 
    \|A(w_1(t))w-A(w_2(t))w\|_{E_0} \\
    = \, & \sup_{t \in [0,T]} \sup_{\|u\|_{E_1^{1/2}}=1}
     \|A_{11}(w_1(t))u-A_{11}(w_2(t))u\|_{E_0^{1/2}} \\
     &+ \sup_{t \in [0,T]} \sup_{\|v\|_{E_1^{1/2}}=1}
    \|A_{22}(w_1(t))v-A_{22}(w_2(t))v\|_{E_0^{1/2}} \\
    = \, & \sup_{t \in [0,T]} \sup_{\substack{\|u\|_{W^{1,p}_{\pi}(\Omega)}=1, \\
    \|\varphi\|_{W^{1,p'}_{\pi}(\Omega)}=1}} \int_{\Omega} 
    \left(c\left(\overline{v_1}_{\varepsilon}(t), \left|\nabla^{\alpha} u_1(t)\right|\right) 
    - c\left(\overline{v_2}_{\varepsilon}(t), \left|\nabla^{\alpha} u_2(t)\right|\right) \right)
    \nabla u \cdot \nabla \varphi d x \\
    &+ \sup_{t \in [0,T]} \sup_{\substack{\|v\|_{W^{1,p}_{\pi}(\Omega)}=1, \\ 
    \|\psi\|_{W^{1,p'}_{\pi}(\Omega)}=1}} \int_{\Omega} 
    \left(a\left(u_1(t),\overline{v_1}_{\varepsilon}(t)\right)
    -a\left(u_2(t),\overline{v_2}_{\varepsilon}(t)\right)\right)
    \nabla v \cdot \nabla \psi d x \\
    \leq \, & \sup_{t \in [0,T]} 
    M^{-\gamma} \left\| \overline{v_1}_{\varepsilon}(t)^{\gamma}b\left(\left|\nabla^{\alpha} u_1(t)\right|\right) 
    - \overline{v_2}_{\varepsilon}(t)^{\gamma} b\left(\left|\nabla^{\alpha} u_2(t)\right|\right)\right\|_{\infty} \\
    &+ \lambda_1M^{-\mu} \sup_{t \in [0,T]} \left\| \left|u_1(t)\right|^{\mu}  
    - \left|u_2(t)\right|^{\mu} \right\|_{\infty} 
    + (1-\lambda_1)M^{-\gamma} \sup_{t \in [0,T]} \left\| \overline{v_1}_{\varepsilon}(t)^{\gamma} 
    - \overline{v_2}_{\varepsilon}(t)^{\gamma}\right\|_{\infty} \\
    \coloneqq \, & I_1 + I_2 + I_3.
\end{align*}
We estimate $I_1, I_2$ and $I_3$ separately. From now on $C=C(R)$ denotes a positive constant which can take different value in 
different places. 
\begin{align*}
    I_1 \leq \, & C \sup_{t \in [0,T]} 
    \left\| \overline{v_1}_{\varepsilon}(t)^{\gamma} \right\|_{\infty}
    \left\| b\left(\left|\nabla^{\alpha} u_1(t)\right|\right) 
    - b\left(\left|\nabla^{\alpha} u_2(t) \right|\right) \right\|_{\infty} \\
    & +C \sup_{t \in [0,T]} \left\| b\left(\left|\nabla^{\alpha} u_2(t)\right|\right) \right\|_{\infty}  
    \left\| \overline{v_1}_{\varepsilon}(t)^{\gamma}
    - \overline{v_2}_{\varepsilon}(t)^{\gamma}\right\|_{\infty} \\
    \leq \, & C \sup_{t \in [0,T]}  \left\| \left|\nabla^{\alpha} u_1(t)\right| 
    - \left|\nabla^{\alpha} u_2(t)\right|\right\|_{\infty} +
    C \sup_{t \in [0,T]} \left\| \overline{v_1}_{\varepsilon}(t) - \overline{v_2}_{\varepsilon}(t)\right\|_{\infty} \\
    \leq \, & C \sup_{t \in [0,T]}  \sum_{j=1}^N 
    \left\| \partial_{x_j}^{\alpha} \left(  u_1(t) - u_2(t) \right) 
    \right\|_{C^{0,\rho}_{\pi}(\overline{\Omega})} 
    + C \sup_{t \in [0,T]} \left\| v_1(t) - v_2(t)\right\|_{C^{0,\rho}_{\pi}(\overline{\Omega})} \\ 
    \leq \, & C \sum_{j=1}^N 
    \left\|\partial_{x_j}^{\alpha}\right\|_{
    \mathcal{L}\left(H^{1-\frac{2}{p}-\delta,p}_{\pi}(\Omega), 
    H^{1-\alpha-\frac{2}{p}-\delta,p}_{\pi}(\Omega)\right)}
    \sup_{t \in [0,T]}  \left\| u_1(t) - u_2(t)\right\|_{W^{1-\frac{2}{p}, p}_{\pi}(\Omega)} \\
    &+ C \sup_{t \in [0,T]} \left\| v_1(t) - v_2(t)\right\|_{W^{1-\frac{2}{p}, p}_{\pi}(\Omega)} \\
    \leq \, & C \left\| u_1 - u_2\right\|_{C\left([0,T],W^{1-\frac{2}{p}, p}_{\pi}(\Omega)\right)} 
    + C \left\| v_1 - v_2\right\|_{C\left([0,T],W^{1-\frac{2}{p}, p}_{\pi}(\Omega)\right)}.
\end{align*}
{Here we use the fact that $r\mapsto\max\{r,\varepsilon\}$ is a Lipschitz 
mapping with constant $1$.} Similarly, we have 
\[I_2 \leq C \left\| u_1 - u_2\right\|_{C\left([0,T],W^{1-\frac{2}{p}, p}_{\pi}(\Omega)\right)}, \
I_3 \leq C \left\| v_1 - v_2\right\|_{C\left([0,T],W^{1-\frac{2}{p}, p}_{\pi}(\Omega)\right)}.\]
Combining the estimates of $I_1, I_2$ and $I_3$, we obtain that 
\begin{align*}
    & \|A(w_1)-A(w_2)\|_{L^{\infty}(0,T;\mathcal{L}(E_1,E_0))} \\ 
    & \leq  C \left(\left\| u_1 - u_2\right\|_{C\left([0,T],E_{1-\frac{1}{p}}^{1/2}\right)} 
    + \left\| v_1 - v_2\right\|_{C\left([0,T],E_{1-\frac{1}{p}}^{1/2}\right)}\right) \\
    & =  C \left\| w_1 - w_2\right\|_{C\left([0,T], E_{1-\frac{1}{p}}\right)} \\ 
    & \leq  C \left\| w_1 - w_2\right\|_{\mathcal{W}^{1,p}(0,T;(E_1,E_0))},
\end{align*}
from which it follows that 
\begin{equation} \label{eqn:operator-matrix-bounded-lip}
A \in \mathcal{C}_{\mathrm{Volt}}^{0,1}\left(\mathcal{W}^{1,p}(0,T;(E_1,E_0)),
\mathcal{MR}^p(0,T;(E_1,E_0))\right).
\end{equation}

Now we check the Lipschitz property of the source term. It is clear 
that $F(0) = \left(\lambda K'f, 0\right)^{\mathrm{T}} \in L^{\infty}(0,T;E_0)$. 
Set $\widetilde{F} = F - F(0) = \left(-\lambda K'K, 0\right)^{\mathrm{T}}$. For $R>0$, 
take $w_1=(u_1,v_1)^{\mathrm{T}}, w_2=(u_2,v_2)^{\mathrm{T}} 
\in B_{\mathcal{W}^{1,p}(0,T;(E_1,E_0))}(0,R)$, we have that 
\begin{align*}
    \left\|\widetilde{F}(w_1)-\widetilde{F}(w_2)\right\|_{L^{\infty}(0,T;E_0)}  
    \leq \, & \sup_{t \in [0,T]} \left\|\widetilde{F}(w_1(t))-\widetilde{F}(w_2(t))\right\|_{E_0} \\ 
    \leq \, & \lambda \sup_{t \in [0,T]} \left\| K'K\left(u_1(t) - u_2(t)\right) \right\|_{\infty} \\ 
    \leq \, & C \sup_{t \in [0,T]} \left\| u_1(t) - u_2(t) \right\|_{W^{1-\frac{2}{p}, p}_{\pi}(\Omega)} \\
    \leq \, & C \left\| w_1 - w_2\right\|_{C\left([0,T],E_{1-\frac{1}{p}}\right)} \\
    \leq \, & C \left\| w_1 - w_2\right\|_{\mathcal{W}^{1,p}(0,T;(E_1,E_0))},
\end{align*}
which implies that 
\begin{equation} \label{eqn:source-term-bounded-lip}
F \in \mathcal{C}_{\mathrm{Volt}}^{0,1}\left(\mathcal{W}^{1,p}(0,T;(E_1,E_0)); 
L^{\infty} \left(0,T;E_0\right), L^{p} \left(0,T;E_0\right)\right).
\end{equation}

It follows from \eqref{eqn:operator-matrix-bounded-lip}, \eqref{eqn:source-term-bounded-lip} and 
Theorem \ref{thm:amann-main} that there exist a maximal $T_{\max} \in \left(0,T_{0}\right]$ and 
a unique solution $w=(u,v)^{\mathrm{T}} \in \mathcal{W}_{\mathrm{loc}}^{1,p}(0,T_{\max};(E_1,E_0))$ 
of the quasilinear abstract Cauchy problem \eqref{eqn:quasilinear-abstract}. Then $(u,v)$ is 
the unique weak solution to auxiliary problem \eqref{eqn:auxiliary} on $\left[0,T_{\max}\right)$. 

{Now set $\varepsilon = \inf_{x \in \Omega} f(x)$. Recall the minimum principle mentioned 
in Remark \ref{rmk:auxiliary} (ii):
\[v(t) \geq \inf_{x \in \Omega} f(x) = \varepsilon 
\quad \mathrm{a.e.~in~}  \Omega\] 
for every $t \in \left(0,T_{\max}\right)$, which implies 
that $v = \max\{v, \varepsilon\} = \overline{v}_{\varepsilon}$. At this point, 
the concept of a weak solution to problem \eqref{eqn:auxiliary} coincides with that 
of a weak solution to \eqref{eqn:main}}, which concludes the proof.
\end{proof}

{
\begin{remark} For the well-posedness analysis of \eqref{eqn:main} we require that $f$ be 
bounded away from zero a.e. in $\Omega$ to ensure $\inf_{x \in \Omega} f(x) > 0$. 
This allows us to rule out the possible degeneracy 
induced by the factor $|v|^{\gamma}$ in the diffusion coefficient through the construction of the 
auxiliary problem \eqref{eqn:auxiliary}. Introducing such a 
technical assumption is common in the analysis of image processing models involving gray level 
indicators; see for example \cite{SHAN2019,MAJEE2020}. 
At the same time, from a modeling viewpoint, a degraded image with additive Gaussian noise 
may contain negative samples (as Gaussian noise is unbounded), and potential negative values are clamped to 
$0$ before processing. We emphasize that our theoretical development does not cover the case
$\left|\{x \in \Omega : f(x) = 0\}\right| > 0$; this limitation should not be read as a claim that 
solutions fail to exist for such initial data; rather, allowing a region of positive measure where $f$ vanishes 
introduces substantial technical challenges (for instance, loss of normally ellipticity) 
that are beyond the scope of this paper. Empirically, zero-valued (black) pixels
have shown no observable impact on the numerical results reported in Sect. \ref{sec5}. A rigorous 
theoretical analysis of this more general case is an interesting and challenging open problem 
left for future work.
\end{remark}
}

A general continuity result {\cite[Theorem 3.1]{AMANN2005}} for quasilinear parabolic 
problems implies the continuous dependence of the maximal existence time of solutions 
on both the initial data and the right-hand side. Utilizing this continuity result, 
we can derive the following proposition of problem \eqref{eqn:main}. 

\begin{proposition} \label{prop:continuity}
    Let the assumptions of Theorem \ref{thm:well-posedness-weak} hold. 
    {Assume that $K$ satisfies the DC-condition, i.e. $K(1)=1$, 
    treating $1\in L^{\infty}(\Omega)$.}
    Given $\tau \in (0,T_0]$ and $c_*>0$, there exists $r_0 = r_0(\tau) > 0$ such that for 
    each $f \in B_{W^{1-\frac{2}{p}, p}_{\pi}(\Omega)}(c_*,r_0)$,
    the weak solution to problem \eqref{eqn:main} exists on $[0,\tau]$.
\end{proposition}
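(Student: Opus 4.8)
The plan is to take the constant function $c$ as a reference point and to invoke the abstract continuity result for the quasilinear problem \eqref{eqn:quasilinear-abstract}, which states that the solution depends continuously on the data and that the maximal existence time is lower semicontinuous in those data. The first task is therefore to produce a global reference solution for the datum $f\equiv c$. Viewing $c$ as a (positive, constant) element of $W^{1-\frac{2}{p},p}_{\pi}(\Omega)$, Theorem \ref{thm:well-posedness-weak} furnishes a unique maximal solution $(u_c,v_c)$. Since the $v$-equation carries no source, the spatially constant $v_c\equiv c$ solves it for all time (its gradient vanishes, so $\operatorname{div}(a(u,v_c)\nabla v_c)=0$), and by uniqueness it is the $v$-component. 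With $v_c\equiv c$ the first equation decouples into a scalar equation for $u_c$; the constant $u_c\equiv c$ annihilates the diffusion term, while the reaction term $-\lambda K'(Kc-c)$ vanishes because the degradation (blur) operator preserves constants, $Kc=c$. Hence $(u_c,v_c)\equiv(c,c)$ is a stationary solution, so $T_{\max}(c)=T_0\geqslant\tau$ and the reference solution exists in particular on the compact interval $[0,\tau]$.

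Next I would transfer this global existence to nearby data. The key structural point is that $f$ enters problem \eqref{eqn:main} in two ways simultaneously: as the common initial datum $u(0)=v(0)=f$ and, affinely, through the source $F$ in \eqref{eqn:source-matrix} via the term $\lambda K'f$. Both dependencies are continuous from $E_{1-\frac{1}{p}}^{1/2}=W^{1-\frac{2}{p},p}_{\pi}(\Omega)$ into the spaces appearing in Theorem \ref{thm:amann-main}, while the quasilinear operator $A$ depends on the solution in the uniformly Lipschitz (Volterra) manner already verified in the proof of Theorem \ref{thm:well-posedness-weak}. These are exactly the hypotheses under which the continuity theorem applies, in the precise form ``continuous dependence on both the initial data and the right-hand side'' quoted above. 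Applying it at the reference pair associated with $f\equiv c$, whose solution exists on $[0,T_0]\supset[0,\tau]$, yields a radius $r=r(\tau)>0$ such that every datum within distance $r$ of $c$ still produces a solution existing on $[0,\tau]$; the dependence of $r$ on $\tau$ reflects how closely $\tau$ is allowed to approach the reference existence time.

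Finally I would check that this ball lies inside the admissible class so that the solution so obtained is genuinely a weak solution of \eqref{eqn:main} and not merely of an auxiliary problem. For small $r$, the embedding $W^{1-\frac{2}{p},p}_{\pi}(\Omega)\hookrightarrow C^{0,1-\delta-\frac{2+N}{p}}(\overline{\Omega})$ established in the proof of Theorem \ref{thm:well-posedness-weak} gives $\|f-c\|_{C(\overline{\Omega})}\leqslant C\|f-c\|_{W^{1-\frac{2}{p},p}_{\pi}(\Omega)}<Cr$, hence $\inf_{\Omega}f\geqslant c-Cr>0$. Thus every $f\in B_{W^{1-\frac{2}{p},p}_{\pi}(\Omega)}(c,r)$ is positive and meets the hypotheses of Theorem \ref{thm:well-posedness-weak}; taking $r$ as the smaller of the two radii completes the argument.

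The step I expect to be the main obstacle is establishing that the reference solution is global. The clean stationarity argument relies on $Kc=c$, i.e. on the normalization that the degradation operator fixes constants; if that normalization is not assumed, one can no longer use $u_c\equiv c$ and must instead prove global existence for the decoupled scalar $u$-equation through an a priori bound in the $W^{1-\frac{2}{p},p}_{\pi}$-topology (for instance an $L^\infty$ maximum-principle estimate combined with parabolic smoothing), which is where the real work would lie. A secondary technical point is to ensure the abstract continuity theorem is applied with the correct topology on the data, accounting for the fact that $f$ appears at once in the initial condition and in the source — which is precisely why continuity in both arguments is required.
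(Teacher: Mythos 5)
Your proposal is correct and follows essentially the same route as the paper's proof: the constant pair $(c,c)$ serves as a global stationary reference solution, and \cite[Theorem 3.1]{AMANN2005} (continuous dependence of the maximal existence time on both the initial data and the right-hand side) then yields a radius $r(\tau)$ such that every datum in the ball produces a solution existing on $[0,\tau]$. The normalization $Kc=c$ that you flag as the main obstacle is precisely the implicit assumption behind the paper's one-line claim that $(c,c)$ is \emph{clearly} stationary, and your positivity check on the ball (needed so that the data satisfy the hypotheses of Theorem \ref{thm:well-posedness-weak}) is a point the paper leaves tacit.
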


\begin{proof}
    {Note that when $f$ in \eqref{eqn:main} is taken to be the
    constant $c_* > 0$, the couple $(u,v) \equiv (c_*,c_*)$ is a weak solution to 
    problem \eqref{eqn:main} on $[0,T_0)$.}
    Set $w_0 = (c_*,c_*)^{\mathrm{T}}$, then for any $\tau \in (0,T_0]$, 
    there exists $R > 0$ such that 
    $\|w_0\|_{\mathcal{W}^{1,p}(0,\tau;(E_1,E_0))} < R$. It follows from 
    \cite[Theorem 3.1]{AMANN2005} that there exists $r_0>0$ such that for 
    any $(f,f)^{\mathrm{T}} \in B_{E_{1-\frac{1}{p}}}(w_0,2r_0)$, the corresponding 
    maximal interval of existence contains $[0,\tau]$. This fact concludes the proof.
\end{proof}

For a general $K$, it is challenging to employ the Stampacchia's truncation 
method to establish the maximum and minimum principles satisfied by $u$. However, 
Theorem \ref{thm:well-posedness-weak} and {Morrey's embedding theorem imply} 
the local solution $u$ is actually bounded.
Using the Moser's method, we establish a specific $L^{\infty}$ estimate for $u$.

\begin{proposition} 
    Let the assumptions of Theorem \ref{thm:well-posedness-weak} hold. 
    Let $(u,v)$ be the weak solution to  
    problem \eqref{eqn:main} on $[0,T)$ with 
    initial datum $f$, then for every $t \in (0,T)$, 
    \begin{equation} \label{prop:l-infty-estimate}
        \|u(t,\cdot)\|_{\infty} \leq e^{2 \lambda t} \left( \|f\|_{\infty} 
        + \|K' f\|_{\infty}\right).
    \end{equation}
\end{proposition}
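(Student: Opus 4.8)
The plan is to run a Moser iteration on the first equation of \eqref{eqn:main}, taking advantage of the fact that the local solution is already known to be bounded. Since $p > \frac{2+N}{1-\alpha}$, the solution obeys $u \in C\big([0,T'],W_{\pi}^{1-\frac{2}{p},p}(\Omega)\big)$ for every $T' < T$, and the embedding $W_{\pi}^{1-\frac{2}{p},p}(\Omega) \hookrightarrow C^{0}(\overline{\Omega})$ used in the proof of Theorem \ref{thm:well-posedness-weak} shows $u(t)\in L^{\infty}(\Omega)$ for each $t$. Consequently, for every integer $q\ge 2$ the function $\varphi = u|u|^{q-2}$ has gradient $(q-1)|u|^{q-2}\nabla u \in L^{p}$, so $\varphi \in W^{1,p}_{\pi}(\Omega)\hookrightarrow W^{1,p'}_{\pi}(\Omega)$ (as $p>p'$) and is an admissible test function in \eqref{eqn:main-weak}.

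First I would insert $\varphi=u|u|^{q-2}$ into the weak form of the $u$-equation. Using the chain rule for the Bochner derivative (legitimate because $u\in W^{1,p}_{\mathrm{loc}}(0,T;W^{-1,p}_{\pi})\cap L^{p}_{\mathrm{loc}}(0,T;W^{1,p}_{\pi})\cap L^\infty$, so $t\mapsto\|u(t)\|_{q}^{q}$ is absolutely continuous), the parabolic term becomes $\frac{1}{q}\frac{d}{dt}\|u(t)\|_{q}^{q}$. The diffusion term equals $(q-1)\int_{\Omega} c\big(|\nabla^{\alpha}u|,v\big)\,|u|^{q-2}|\nabla u|^{2}\,dx$, which is nonnegative because $c=\frac{|v|^{\gamma}}{M^{\gamma}}b(|\nabla^{\alpha}u|)>0$ (recall $v\ge\varepsilon>0$ and $b>0$); appearing with a favourable sign, it may simply be discarded. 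This leaves
\[
\frac{1}{q}\frac{d}{dt}\|u(t)\|_{q}^{q}\;\le\;-\lambda\int_{\Omega}K'(Ku-f)\,u|u|^{q-2}\,dx .
\]
Estimating the right-hand side by Hölder's inequality, $\big|\int_{\Omega}K'(Ku-f)u|u|^{q-2}\,dx\big|\le\|K'(Ku-f)\|_{\infty}\,|\Omega|^{1/q}\,\|u\|_{q}^{q-1}$, and dividing through by $\|u\|_{q}^{q-1}$ gives the $q$-uniform inequality $\frac{d}{dt}\|u(t)\|_{q}\le\lambda\,|\Omega|^{1/q}\,\|K'(Ku-f)\|_{\infty}$.

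The crucial step is to control the reaction term $\|K'(Ku-f)\|_{\infty}$. Since $K\in\mathcal{L}(L^{1}(\Omega),L^{2}(\Omega))$, its adjoint satisfies $K'\in\mathcal{L}(L^{2}(\Omega),L^{\infty}(\Omega))$, and the boundedness of $\Omega$ lets me bound $\|K'Ku\|_{\infty}\le C\|u\|_{\infty}$, hence $\|K'(Ku-f)\|_{\infty}\le 2\|u\|_{\infty}+\|K'f\|_{\infty}$. Letting $q\to\infty$ (so $|\Omega|^{1/q}\to 1$ and $\|u\|_{q}\to\|u\|_{\infty}$) yields
\[
\frac{d}{dt}\|u(t)\|_{\infty}\;\le\;2\lambda\,\|u(t)\|_{\infty}+\lambda\,\|K'f\|_{\infty},
\]
with $\|u(0)\|_{\infty}=\|f\|_{\infty}$. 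Grönwall's inequality then gives $\|u(t)\|_{\infty}\le e^{2\lambda t}\|f\|_{\infty}+\tfrac{1}{2}\|K'f\|_{\infty}(e^{2\lambda t}-1)\le e^{2\lambda t}\big(\|f\|_{\infty}+\|K'f\|_{\infty}\big)$, which is \eqref{prop:l-infty-estimate}. I expect the main obstacle to be precisely this reaction term: unlike the diffusion part it carries no sign, so a Stampacchia truncation is unavailable (as remarked before the statement), and one must route the estimate through the mapping properties of $K$ and $K'$ to obtain a bound that is linear in $\|u\|_{\infty}$ with constants uniform in $q$. The only other delicate points are justifying the chain rule for $\frac{d}{dt}\|u\|_{q}^{q}$ in this low-regularity Bochner framework and the passage to the limit $q\to\infty$.
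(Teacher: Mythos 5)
Your overall strategy—testing the $u$-equation in \eqref{eqn:main-weak} with power-type test functions, discarding the nonnegative diffusion term, applying Gr\"onwall, and sending the exponent to infinity—is exactly the paper's Moser-type argument, and the technical points you flag (admissibility of $\varphi$, the chain rule for $t\mapsto\|u(t)\|_q^q$, the passage to the limit) are treated the same way there. The genuine gap is the step $\|K'Ku\|_{\infty}\leqslant 2\|u\|_{\infty}$. You first write, correctly, $\|K'Ku\|_{\infty}\leqslant C\|u\|_{\infty}$, where the natural constant coming from $K\in\mathcal{L}(L^1(\Omega),L^2(\Omega))$, $K'\in\mathcal{L}(L^2(\Omega),L^{\infty}(\Omega))$ and $L^{\infty}(\Omega)\hookrightarrow L^{1}(\Omega)$ is $C=\|K'\|\,\|K\|\,|\Omega|$; you then silently replace $C$ by $2$. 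Nothing in the hypotheses forces $C\leqslant 2$, and since this constant sits in the Gr\"onwall rate, your argument as written only yields $\|u(t,\cdot)\|_{\infty}\leqslant e^{C\lambda t}\left(\|f\|_{\infty}+\|K'f\|_{\infty}\right)$ with an exponent depending on $\|K\|$ and $|\Omega|$, which is weaker than \eqref{prop:l-infty-estimate}.

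The structural difference from the paper is precisely where the factor $2$ comes from. The paper splits the reaction term $\lambda\int_{\Omega}K'(Ku-f)\,u^{2r-1}dx$ by Young's inequality with exponents $2r$ and $\frac{2r}{2r-1}$: each of the two source pieces then contributes $\frac{2r-1}{2r}\lambda\|u\|_{L^{2r}}^{2r}$ to the rate, giving $\frac{4r-2}{2r}\lambda\to 2\lambda$, while the $K$-dependent quantity $\int_{\Omega}|K'Ku|^{2r}dx$ enters only with the prefactor $\frac{1}{2r}$ and is damped away as $r\to\infty$; the limit rate $2\lambda$ is therefore independent of $K$ and $\Omega$. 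In your version the full $\mathcal{L}(L^{\infty})$-norm of $K'K$ is factored out in front of $\|u\|_{q}^{q-1}$ and thus placed directly into the Gr\"onwall rate, where it survives the limit $q\to\infty$; no choice of H\"older exponents in the bound $\|K'(Ku-f)\|_{\infty}|\Omega|^{1/q}\|u\|_{q}^{q-1}$ can undo this. To repair the proof, keep the $u$-dependence of the reaction term inside a Young split of the form $\frac{1}{2r}|K'Ku|^{2r}+\frac{2r-1}{2r}|u|^{2r}$, as the paper does, rather than extracting $\|K'Ku\|_{\infty}$ as an inhomogeneity. A minor additional point: the limit $q\to\infty$ is cleaner if you integrate the differential inequality in time first and pass to the limit in the integrated form, since differentiability of $t\mapsto\|u(t)\|_{\infty}$ is not automatic.
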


\begin{proof}
    Since $u(t)$ is H{\"o}lder continuous, we can take 
    $\varphi = u^{2r-1}(t)$ in \eqref{eqn:main-weak} with $r \geq 1$. Then we have 
    \begin{align*}
    \frac{1}{2r} \frac{d}{dt} \int_{\Omega} u^{2r}(t) dx 
    &+ (2r-1)\int_{\Omega} c\left(\left|\nabla^{\alpha} u\right|, v\right) 
    u^{2(r-1)} |\nabla u|^2 dx \\ 
    +& \lambda \int_{\Omega} K'Ku \cdot u^{2r-1} dx 
    - \lambda \int_{\Omega} K'f \cdot u^{2r-1} dx=0.
    \end{align*}
    Applying Young's and H{\"o}lder's inequalities, we obtain that 
    \begin{align*}
        \frac{1}{2r} \frac{d}{dt} \|u(t,\cdot)\|_{L^{2r}(\Omega)}^{2r} 
    & \leq \frac{\lambda}{2r}  \int_{\Omega} |K'Ku|^{2r} dx + 
    \frac{2r-1}{2r} \lambda \int_{\Omega} |u|^{2r} dx \\
    & \quad + \frac{\lambda}{2r} \int_{\Omega} |K'f|^{2r} dx + 
    \frac{2r-1}{2r} \lambda \int_{\Omega} |u|^{2r} dx \\
    & \leq \frac{\lambda}{2r} \|K'f\|_{L^{2r}(\Omega)}^{2r}
    + \frac{4r-2+\|K\|^2|\Omega|}{2r} \lambda\|u(t,\cdot)\|_{L^{2r}(\Omega)}^{2r}.
    \end{align*}
    It follows from Gr{\"o}nwall's inequality that 
    \[\|u(t,\cdot)\|_{L^{2r}(\Omega)}^{2r} 
    \leq \exp\{(4r-2+\|K\|^2|\Omega|)\lambda t\} 
    \left(\|u(0,\cdot)\|_{L^{2r}(\Omega)}^{2r} + 
    \lambda t \|K'f\|_{L^{2r}(\Omega)}^{2r}\right).\]
    Using the monotonicity of $\ell^p$ norms, we arrive the estimate 
    {\small \[\|u(t,\cdot)\|_{L^{2r}(\Omega)} \leq 
    \exp\left\{\left(2+\frac{\|K\|^2|\Omega|-2}{2r}\right)\lambda t\right\} 
    \left(\|u(0,\cdot)\|_{L^{2r}(\Omega)} 
    + (\lambda t)^{\frac{1}{2r}} \|K'f\|_{L^{2r}(\Omega)}\right)\]}
    which yields, as $r \to \infty$, the estimate 
    \eqref{prop:l-infty-estimate} holds.
\end{proof}

Now we apply the well-known generalized principle of linearized stability to 
conclude the following stability result for the equilibria of \eqref{eqn:main}. 
This result pertains to the case of $\lambda = 0$, which leads to a pure diffusion 
system.

\begin{theorem} \label{prop:stability}
    Let the assumptions of Theorem \ref{thm:well-posedness-weak} hold. 
    For any given $c_*>0$, there exists $r_0>0$ such that the unique weak 
    solution $(u,v)$ to problem \eqref{eqn:main}  
    {with $\lambda = 0$} and initial datum
    $f \in B_{W^{1-\frac{2}{p}, p}_{\pi}(\Omega)}(c_*,r_0)$ exists globally
    and converges exponentially in the topology 
    of $W^{1-\frac{2}{p}, p}_{\pi}(\Omega)^2$ to some 
    equilibrium $(u_{\infty},v_{\infty})$ of \eqref{eqn:main} as $t \to \infty$.
\end{theorem}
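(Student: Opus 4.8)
The plan is to recast the source-free system ($\lambda = 0$) as an autonomous quasilinear Cauchy problem $\dot w + A(w)w = 0$, $w(0) = (f,f)^{\mathrm T}$, in the same functional setting $E_0 = W^{-1,p}_\pi(\Omega)^2$, $E_1 = W^{1,p}_\pi(\Omega)^2$ used in the proof of Theorem \ref{thm:well-posedness-weak}, and then to apply the generalized principle of linearized stability for such problems. The hypotheses of that principle amount to: (a) the nonlinear map $A(\cdot)$ is $C^1$ on a neighborhood of the equilibrium in the phase space $E_{1-\frac{1}{p}} = W^{1-\frac{2}{p},p}_\pi(\Omega)^2$ and the linearization has maximal $L^p$-regularity (which is inherited from the proof of Theorem \ref{thm:well-posedness-weak}); (b) near the equilibrium the set of equilibria is a $C^1$-manifold $\mathcal E$; (c) the full linearization $A_0$ about the equilibrium $w_* = (c,c)$ is normally stable, i.e. $0$ is a semisimple eigenvalue with $N(A_0) = T_{w_*}\mathcal E$ and $\sigma(A_0)\setminus\{0\}$ contained in the open right half-plane. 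Once these hold, the principle yields global existence for initial data near $w_*$ together with exponential convergence to a nearby equilibrium.

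First I would identify the equilibria. Testing $\operatorname{div}(c(|\nabla^\alpha u|,v)\nabla u) = 0$ against $u$ and $\operatorname{div}(a(u,v)\nabla v) = 0$ against $v$, integrating over the periodic cell, and using the strong ellipticity bounds \eqref{eqn:strongly-elliptic-1}--\eqref{eqn:strongly-elliptic-2}, forces $\nabla u = \nabla v = 0$; hence the equilibria are exactly the constant pairs $(a,b)$, and locally $\mathcal E$ is the two-dimensional affine manifold of constant states with $T_{(c,c)}\mathcal E$ equal to the constants in each component. Because we work in a ball around the strictly positive constant $c$, the maps $s\mapsto s^\gamma$ and $s\mapsto s^\mu$ are smooth there and $b\in BUC^\infty$, so the coefficients, and hence $A(\cdot)$, are $C^1$ near $w_*$ on the phase space; this is exactly where positivity of $c$ (and of $f$) is needed, to stay away from the singularities of $s^\gamma$, $s^\mu$ at the origin.

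Next I would compute the linearization. The crucial simplification is that $\nabla^\alpha$ annihilates constants, so $\nabla^\alpha c = 0$ and $\nabla c = 0$ at $w_*$. Writing $u = c+\phi$, $v = c+\psi$, every variation of the diffusion coefficients is multiplied by $\nabla\phi$ or $\nabla\psi$, which is already first order, so those contributions are quadratic and drop out. The linearization therefore decouples into two scalar heat operators, $A_0 = \operatorname{diag}(-d_1\Delta, -d_2\Delta)$ with $d_1 = c^\gamma/M^\gamma$ and $d_2 = \lambda_1 c^\mu/M^\mu + (1-\lambda_1)c^\gamma/M^\gamma$, both strictly positive. For the periodic Laplacian the spectrum is $\{d_i\,4\pi^2|k|^2 : k\in\mathbb Z^N\}$: the eigenvalue $0$ is simple and semisimple (its Riesz projection is the integral mean onto the constants), while the remaining spectrum is bounded below by a positive spectral gap. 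Consequently $N(A_0)$ is the two-dimensional space of constant pairs, coinciding with $T_{(c,c)}\mathcal E$, the eigenvalue $0$ is semisimple, and $\sigma(A_0)\setminus\{0\}\subset\{\operatorname{Re}>0\}$, so $w_*$ is normally stable.

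With normal stability in hand, the principle provides $r>0$ such that for $f\in B_{W^{1-\frac{2}{p}, p}_\pi(\Omega)}(c,r)$ the weak solution exists globally and converges in $W^{1-\frac{2}{p}, p}_\pi(\Omega)^2$, at an exponential rate governed by the spectral gap, to some equilibrium $(u_\infty, v_\infty)\in\mathcal E$; testing the equations against the constant function shows $\int_\Omega u = \int_\Omega v = \int_\Omega f$ is conserved, which pins the limit down to $u_\infty = v_\infty = |\Omega|^{-1}\int_\Omega f$. The main obstacle is the verification of normal stability, namely establishing that $0$ is semisimple and that its eigenspace matches $T_{(c,c)}\mathcal E$ exactly so that there is no secular drift along the equilibrium manifold, together with the $C^1$-regularity of the quasilinear structure on the phase space, which is what confines the whole argument to a neighborhood of a strictly positive constant where the nonlinear coefficients are smooth.
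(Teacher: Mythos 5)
Your proposal is correct and takes essentially the same approach as the paper: recast the source-free system as a quasilinear abstract Cauchy problem in the setting $E_0 = W^{-1,p}_{\pi}(\Omega)^2$, $E_1 = W^{1,p}_{\pi}(\Omega)^2$, show the constant equilibrium $(c,c)$ is normally stable (diagonal linearization into two positive-coefficient periodic heat operators, semisimple zero eigenvalue whose kernel is the two-dimensional space of constant pairs and coincides with the tangent space of the equilibrium manifold, spectral gap for the rest of the spectrum), and conclude via the generalized principle of linearized stability of Pr\"uss--Simonett--Zacher. You differ only in minor sub-verifications---you classify the equilibria near $(c,c)$ by a direct energy/testing argument exploiting positivity and read off semisimplicity from the explicit Fourier spectrum of the periodic Laplacian, whereas the paper obtains semisimplicity through compact resolvent, eventual compactness of the semigroup and $N(A_0)=N(A_0^2)$, and identifies the local equilibrium manifold by a dimension count against the manifold $\mathcal{M}$ of equilibria---and your extra identification of the limit $(u_{\infty},v_{\infty})$ as the mean value of $f$ via mass conservation is a correct refinement not stated in the paper's proof.
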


\begin{proof}
    It is sufficient to prove that the equilibrium $(c_*, c_*)^{\mathrm{T}}$ is 
    normally stable in $W^{1-\frac{2}{p}, p}_{\pi}(\Omega)^2$ when $\lambda=0$. 
    For a definition of normally stable, see \cite{PRUSS2009}.
    Let $\mathcal{E}$ denote 
    the set of equilibrium of \eqref{eqn:quasilinear-abstract}, which 
    means that $w = (u,v)^{\mathrm{T}} \in \mathcal{E}$ 
    if and only if $w \in E_1$ and $A(w)w=0=F(w)$, i.e.
    for any $\varphi, \psi \in W_{\pi}^{1,p'}(\Omega)$, 
    \begin{equation} \label{eqn:equilibrium-weak}
        \int_{\Omega} c\left(\left|\nabla^{\alpha} u\right|,v\right)
        \nabla u \cdot \nabla \varphi d x =
        \int_{\Omega} a\left(u, v\right) \nabla v \cdot \nabla \psi d x = 0.
    \end{equation}
    It is clear that $\mathcal{E}_0 \coloneqq \mathbb{R}^2 \subset \mathcal{E}$. At each 
    $w^* = (c_*, c_*)^{\mathrm{T}} \in \mathcal{E}_0$, the linearization of $A$ is given by
    \[A_0 \coloneqq A(w^*) + \left.\frac{d}{dw} \left[A(w)w^*\right]\right|_{w=w^*}
    = \left(\begin{matrix}
        -c_{11} \Delta & 0 \\
        0         & -c_{22} \Delta
    \end{matrix}\right)\]
    where $c_{11}=c_{11}(c_*), c_{22}=c_{22}(c_*) > 0$, $\Delta: W_{\pi}^{1,p}(\Omega) 
    \subset W_{\pi}^{-1,p}(\Omega) \to W_{\pi}^{-1,p}(\Omega)$. Since the spectrum of
    $-\Delta$ is discrete and consists only of non-negative eigenvalues, we know that 
    \[\sigma(A_0) \setminus \{0\} \subset \mathbb{C}_+
    = \left\{z \in \mathbb{C}: \mathrm{Re} z > 0\right\}\]
    and $\{0\}$ is isolated in $\sigma(A_0)$. It follows from $W_{\pi}^{1,p}(\Omega) 
    \doublehookrightarrowalt W_{\pi}^{-1,p}(\Omega)$ that $A_0$ has 
    compact resolvent, which implies that $A_0$ generates an eventually compact 
    semigroup since $-\Delta$ is sectorial. By \cite[Corollary 5.3.2]{ENGEL2000}, we 
    can conclude that $0$ is a pole of $R(\cdot,A_0)$.

    To show 0 is a semi-simple eigenvalue, we will prove that 
    $N(A_0) = N(A_0^2)$. Clearly $N(A_0) = \mathcal{E}_0$, taking $z \in N(A_0^2)$, then 
    there exists $w \in \mathcal{E}_0$ such that $A_0 z = w$. The periodic condition 
    ensures that $w=0$, and it follows that $z \in N(A_0)$. Therefore,
    \cite[Remark A.2.4]{LUNARDI1995} yields that
    $0$ is semi-simple. As $\mathcal{E}_0$ is a subspace 
    of $W^{1-\frac{2}{p}, p}_{\pi}(\Omega)^2$, the tangent space at $w^*$ is isomorphic to
    $N(A_0)$.

    In \cite[Remark 2.2]{PRUSS2009} it is shown that all equilibria close to $w^*$ are contained 
    in a manifold $\mathcal{M}$ of dimension $\mathrm{dim}\left(N(A_0)\right) = 2$. Since 
    the dimension of $\mathcal{E}_0$ is also $2$, there exists an open neighborhood
    $U_0 \subset W^{1-\frac{2}{p}, p}_{\pi}(\Omega)^2$ of $w^*$ such that 
    $\mathcal{M} \cap U_0 = \mathcal{E}_0 \cap U_0$. Thus, $U_0$ contains no other 
    equilibria than the elements of $\mathcal{E}_0$, i.e. 
    $\mathcal{E}_0 \cap U_0 = \mathcal{E} \cap U_0$. 

    So all assumptions of the generalized principle of linearized stability 
    \cite[Theorem 2.1]{PRUSS2009} are satisfied. This principle concludes the proof.
\end{proof}

\begin{remark}
    {$ $}
    \begin{enumerate}[(i)] \item {In the general case ($\lambda \ne 0$), 
    the question of whether steady states even exist remains a highly challenging open problem,
    let alone the analysis of convergence of solutions to such states.
    In addition, the proposed model \eqref{eqn:main} is not of variational type, 
    making it generally difficult to define a corresponding Lyapunov functional. 
    However}, when $\lambda = 0$, {it possesses a natural} Lyapunov functional given by 
    \[E = \frac{1}{2} \int_{\Omega} (u^2+v^2) dx.\]
     
    \item Let the assumptions of Theorem \ref{thm:well-posedness-weak} hold and $(u,v)$ be 
    the weak solution to problem \eqref{eqn:main} on $[0,T)$ with 
    the initial datum $f$. {If $\lambda = 0$ or} $K=I$, the weak solution to 
    problem \eqref{eqn:main} has additionally the following properties:
    \begin{enumerate}[(a)]
        \item (Extremum principle) For every $t \in (0,T)$,
            \[\inf_{x \in \Omega} f(x) \leq u(t,x) 
                ,\ v(t,x) \leq \sup_{x \in \Omega} f(x), 
            \quad \mathrm{for~a.e.} \ x \in \Omega.\]
        \item (Average invariance) For every $t \in (0,T)$, it holds that
            \[\int_{\Omega} u(t,x) dx = \int_{\Omega} v(t,x) dx = \int_{\Omega} f(x) dx.\]
    \end{enumerate}
    The proofs are straightforward. 
\end{enumerate} 
\end{remark}

\section{Regularity results for the proposed model}\label{sec4}

In this section, we establish regularity results for the solutions of 
problem \eqref{eqn:main}. Firstly, when the regularity of the initial datum is 
improved, the local existence and uniqueness of strong solutions to 
problem \eqref{eqn:main} is obtained via Maximum Regularity Theory. 
{Let $p>2$. Given $f \in W_{\pi}^{2-\frac{2}{p},p}(\Omega)$, a couple of 
functions 
\[u, v \in L_{\mathrm{loc}}^p\left(0,T_{\max};W_{\pi}^{2,p}(\Omega)\right)   
    \cap W_{\mathrm{loc}}^{1,p}\left(0,T_{\max};L_{\pi}^{p}(\Omega)\right)\] 
is called a strong solution to problem \eqref{eqn:main}  
on $[0,T)$, if it satisfies \eqref{eqn:main} a.e. in $Q_T \coloneqq (0,T) \times \Omega$.}

\begin{theorem} \label{thm:well-posedness-strong}
    Assume that $p \in \left(\frac{2+N}{1-\alpha}, \infty\right)$, 
    $\beta \in \{1\} \cup [2,\infty)$, $\mu > 1$ and $\gamma \geq 1$. 
    Let $f \in W_{\pi}^{2-\frac{2}{p},p}(\Omega)$ with {$f \geq 0$ and 
    bounded away from zero a.e. in $\Omega$}.
    Then there exist a unique 
    maximal $T_{\max} \in (0,T_0]$ such that problem \eqref{eqn:main} possess 
    a unique strong solution $(u, v)$ on $[0, T_{\max})$.
\end{theorem}

\begin{proof}
    The method for proving the local existence and uniqueness of strong solutions is similar to the proof of 
    Theorem \ref{thm:well-posedness-weak}, with the main differences lying in establishing 
    the maximum $L^p$-regularity of the linearized operator matrix and the Lipschitz 
    estimate between the operator matrix and the source term. Firstly, we reformulate 
    an auxiliary problem 
    \begin{equation}
        \left\{
        \begin{array}{ll}
            \displaystyle \vspace{0.2em} 
            u_t = 
            c\left(\left|\nabla^{\alpha} u\right|, \overline{v}_{\varepsilon}\right) \Delta u
            + \nabla c\left(\left|\nabla^{\alpha} u\right|, v\right) 
            \cdot \nabla u
            - \lambda K'(Ku-f), & \textrm{in~~} (0,T_0) \times \Omega,  \\ 
            v_t = 
            a\left(u, \overline{v}_{\varepsilon}\right) \Delta v
            + \nabla a\left(u, v\right) \cdot \nabla v, 
            & \textrm{in~~} (0,T_0) \times \Omega, \\
            u, v \textrm{~~periodic on} \Omega & \textrm{for~~} (0,T_0), \\
            u(0,\cdot) = v(0,\cdot) = f, & \textrm{in~~} \Omega, 
        \end{array}
        \right. \label{eqn:auxiliary-strong}
    \end{equation}
    as a quasilinear abstract Cauchy problem. 
    In this section, we always choose
    \[E_0 = L_{\pi}^{p}(\Omega)^2, \quad E_1 = W_{\pi}^{2,p}(\Omega)^2,\]
    which is well-known to satisfy $E_1 \xhookrightarrow[]{d} E_0$. Then 
    \[E_0^{1/2} = L_{\pi}^{p}(\Omega), \quad E_1^{1/2} = W_{\pi}^{2,p}(\Omega), \quad 
    E_{1-\frac{1}{p}} = W_{\pi}^{2-\frac{2}{p},p}(\Omega)^2, \quad
    E_{1-\frac{1}{p}}^{1/2} = W_{\pi}^{2-\frac{2}{p},p}(\Omega).\]
    For fixed $w_0=(u_0,v_0)^{\mathrm{T}} \in \mathcal{W}^{1,p}(0,T;(E_1,E_0))$, 
    we define linear differential operators $A_{11}(w_0)$ and $A_{22}(w_0)$ as follows:
    \[\begin{array}{c}
        A_{11}(w_0)u = -c\left(\left|\nabla^{\alpha} u_0\right|, 
    \overline{v_0}_{\varepsilon}\right) \Delta u - 
    \nabla c\left(\left|\nabla^{\alpha} u_0\right|, v_0\right) 
    \cdot \nabla u, \quad u \in W_{\pi}^{1,p}(\Omega), \\[1ex]
        A_{22}(w_0)v = -a\left(u_0, \overline{v_0}_{\varepsilon}\right) \Delta v
        - \nabla a\left(u_0, v_0\right) \cdot \nabla v, \quad v \in W_{\pi}^{1,p}(\Omega). 
    \end{array}\]
    An operator matrix $A$ can then be given by
    \[A: \mathcal{W}^{1,p}(0,T;(E_1,E_0)) 
    \to L^{\infty} \left(0,T;\mathcal{L}\left(E_1,E_0\right)\right), \quad
    w \mapsto  \left(\begin{matrix}
    A_{11}(w) & 0 \\
    0         & A_{22}(w)
    \end{matrix}\right).\]
    Defining $F$ as in \eqref{eqn:source-matrix}, then
    problem \eqref{eqn:auxiliary-strong} can be rewritten as
    a quasilinear abstract Cauchy problem with the same form as 
    \eqref{eqn:quasilinear-abstract}:
    \begin{equation} \label{eqn:quasilinear-abstract-strong}
        \left\{
            \begin{array}{l} 
                \dfrac{d w}{d t} + A(w)w = F(w), \quad t \in (0,T_0), \vspace{0.2em} \\ 
                w(0) = (f,f)^{\mathrm{T}}, 
        \end{array} \right.
    \end{equation}
    
    Next, we check the maximal $L^p$-regularity of the linearized operator matrix. 
    {Again, embeddings \eqref{eqn:slobodeckij-to-bessel} and \eqref{eqn:bessel-to-holder}} imply 
    that $\delta \in \left(0,1-\alpha-\frac{2+N}{p}\right)$,
    \[\partial^{\alpha}_{x_j} \in \mathcal{L}\left(W^{2-\frac{2}{p},p}_{\pi}(\Omega), 
        C_{\pi}^{1,1-\alpha-\frac{N+2}{p}-\delta}(\overline{\Omega})\right), 
    \quad j = 1,2, \cdots, N.\]
    It follows from Amann's embedding theorem \eqref{eqn:amann-embedding} that there exists
    $\rho = 1-\alpha-\frac{N+2}{p}-\delta \in (0,1)$ such that for any
    $u \in \mathcal{W}^{1,p}\left(0,T;\left(E_1^{1/2},E_0^{1/2}\right)\right)$,
    \[\left|\nabla^{\alpha}u\right|
    \in C\left([0,T],C^{1,\rho}_{\pi}(\overline{\Omega})\right).\]
    The property of Nemytskii operators implies that 
    \[b\left(\left|\nabla^{\alpha}u\right|\right), \
      b'\left(\left|\nabla^{\alpha}u\right|\right) \in 
      C\left([0,T],C^{0,\rho}_{\pi}(\overline{\Omega})\right).\]
    Set $\rho_0 = 1-\alpha-\frac{N+2}{p}-\delta$, we choose $\rho_1 = \min\{\rho, \rho_0\}$ 
    if $\gamma = 1$ and $\rho_1 = \min \left\{\rho, \min\{1,\gamma-1\}\rho_0\right\}$ 
    if $\gamma > 1$, then for any
    $u, v \in \mathcal{W}^{1,p}\left(0,T;\left(E_1^{1/2},E_0^{1/2}\right)\right)$,
    \begin{equation} \label{eqn:u-c-holder-continuity}
    c\left(\left|\nabla^{\alpha} u\right|, \overline{v}_{\varepsilon}\right), \ 
    \partial_{x_j} c\left(\left|\nabla^{\alpha} u\right|, v\right) \in 
    C\left([0,T],C^{0,\rho_1}_{\pi}(\overline{\Omega})\right)
    \end{equation}
    since H{\"o}lder spaces {on $\overline{\Omega}$} are algebras. It follows that 
    \begin{equation}  \label{eqn:continuous-strong-1}
        \left[t \mapsto A_{11}(w(t)) \right] \in 
        C\left([0,T], \mathcal{L}\left(E_1^{1/2},E_0^{1/2}\right)\right)
    \end{equation}
    for any $w=(u,v)^{\mathrm{T}} \in \mathcal{W}^{1,p}(0,T;(E_1,E_0))$. On 
    the other hand, {as in the proof of 
    Theorem \ref{thm:well-posedness-weak},} for {each fixed} $t \in (0,T)$ we 
    consider the principal symbol of $\mathbf{A}_{11}^t = -A_{11}(w(t))${, given by} 
    \[{\mathcal{A}_{11,\pi}^t(x,\xi) \coloneqq
    c\left(\left|\nabla^{\alpha} u(t)\right|, \overline{v}_{\varepsilon}(t)\right) |\xi|^2, 
    \quad x \in \Omega, \ \xi \in \mathbb{R}^N \setminus \{0\}.}\] 
    It follows from \eqref{eqn:u-c-holder-continuity} that there exists $C_1>0$ such 
    that for any $x \in \Omega$ and $|\xi|=1$, 
    \[0< \frac{1}{C_1} < {\mathcal{A}_{11,\pi}^t(x,\xi)} < C_1,\]
    which implies that $\left\{\lambda \in \mathbb{C}: \mathrm{Re} \lambda 
    < \frac{1}{C_1} \right\} \subset \rho(\mathcal{A}_{11,\pi}^{t})$ holds 
    for all $x \in \Omega$ and $|\xi|=1$. $\mathcal{A}_{11,\pi}$ 
    is real, so there exists $\theta \in \left(0,\frac{\pi}{2}\right)$ such that 
    for all $x \in \Omega$ and $|\xi|=1$,
    \[\sigma(\mathcal{A}_{11,\pi}^{t}) \subset \left\{\lambda \in \mathbb{C} : 
    \mathrm{Re} \lambda \geq \frac{1}{C_1} \right\} 
    \cap \mathbb{R} \subset \left\{z \in \mathbb{C}  \setminus 
    \{0\} : |\mathrm{arg}(z)|<\theta \right\}.\]
    Thus, $-A_{11}(w(t))$ is uniformly $\left(C_1, \theta\right)$-elliptic 
    {in the sense of \cite{AMANN1994,KUNSTMANN2004}} for every 
    $t \in (0,T)$. From \cite[Corollary 9.5]{AMANN1994}, it follows that
    $A_{11}(w(t))$ is a sectorial operator on $L^2(\Omega)$, i.e. $A_{11}(w(t))$ is the generator 
    of an analytic semigroup $T_2(t)$ on $L^2(\Omega)$. \cite[Theorem 9.4.2]{FRIEDMAN1983} 
    implies that $T_2(t)$ can be represented by a kernel satisfying a Gaussian bound. 
    It follows from \cite[Theorem 3.1]{HIEBER1997} that for any $t \in (0,T)$,
    \begin{equation} \label{eqn:maximal-regularity-strong-1}
        A_{11}(w(t)) \in \mathcal{MR}^p\left(E_1^{1/2},E_0^{1/2}\right).
    \end{equation} 
    Set $\rho_3 = 1-\frac{N+2}{p}$, we choose $\rho_2 = \min\{1, \mu - 1\}\rho_3$ 
    if $\gamma = 1$ and $\rho_2 = \min \{1,\gamma-1, \mu - 1 \} \rho_3$ 
    if $\gamma > 1$, it is easy to see that for any 
    $u, v \in \mathcal{W}^{1,p}\left(0,T;\left(E_1^{1/2},E_0^{1/2}\right)\right)$,
    \[a\left(u, \overline{v}_{\varepsilon}\right), \ 
    \partial_{x_j} a\left(u, v\right) \in 
    C\left([0,T],C^{0,\rho_2}_{\pi}(\overline{\Omega})\right),\]
    from which it follows that 
    \begin{equation}  \label{eqn:continuous-strong-2}
        \left[t \mapsto A_{22}(w(t)) \right] \in 
        C\left([0,T], \mathcal{L}\left(E_1^{1/2},E_0^{1/2}\right)\right)
    \end{equation}
    for any $w=(u,v)^{\mathrm{T}} \in \mathcal{W}^{1,p}(0,T;(E_1,E_0))$. The process of 
    obtaining \eqref{eqn:maximal-regularity-strong-1} reveals that the maximal $L^p$-
    regularity of the linearized operator ultimately boils down to sufficient regularity
    of the coefficients. A similar argument yields that for any $t \in (0,T)$,
    \begin{equation} \label{eqn:maximal-regularity-strong-2}
        A_{22}(w(t)) \in \mathcal{MR}^p\left(E_1^{1/2},E_0^{1/2}\right).
    \end{equation} 
    From \eqref{eqn:maximal-regularity-strong-1} and 
    \eqref{eqn:maximal-regularity-strong-2}, it follows that for all $t \in (0,T)$ and 
    each $G \in L^p(0,T)$, the linear abstract Cauchy problem 
    \[\left\{
    \begin{array}{l} 
        \dfrac{d z}{d \tau} + A(w(t))z = G(\tau), \quad \tau \in (0,T), \vspace{0.2em} \\ 
        z(0) = (0,0)^{\mathrm{T}}
    \end{array} \right.\] 
    has a unique strong $L^p$ solution on $(0,T)$, i.e. 
    \begin{equation} \label{eqn:maximal-regularity-strong-each-t}
        A(w(t)) \in \mathcal{MR}^p\left(E_1,E_0\right).
    \end{equation} 
    By \eqref{eqn:continuous-strong-1}, \eqref{eqn:continuous-strong-2}, 
    \eqref{eqn:maximal-regularity-strong-each-t} and \cite[Theorem 7.1]{AMANN2004}, 
    we can conclude that for any $w \in \mathcal{W}^{1,p}(0,T;(E_1,E_0))$,
    \[A(w) \in \mathcal{MR}^p(0,T;(E_1,E_0)).\]
     
    Now we establish Lipschitz estimates of the operator matrix. For $R>0$, 
    take $w_1=(u_1,v_1)^{\mathrm{T}}, w_2=(u_2,v_2)^{\mathrm{T}} 
    \in B_{\mathcal{W}^{1,p}(0,T;(E_1,E_0))}(0,R)$, then 
    \begin{align*}
        & \|A(w_1)-A(w_2)\|_{L^{\infty}(0,T;\mathcal{L}(E_1,E_0))} \\
        &=   \sup_{t \in [0,T]} 
        \sup_{\substack{w=(u,v)^{\mathrm{T}} \\ \|w\|_{E_1}=1}} 
        \|A(w_1(t))w-A(w_2(t))w\|_{E_0} \\
        &=   \sup_{t \in [0,T]} \sup_{\|u\|_{E_1^{1/2}}=1} 
         \|A_{11}(w_1(t))u-A_{11}(w_2(t))u\|_{E_0^{1/2}} \\
         &\quad + \sup_{t \in [0,T]} \sup_{\|v\|_{E_1^{1/2}}=1}
        \|A_{22}(w_1(t))v-A_{22}(w_2(t))v\|_{E_0^{1/2}} \\
        &\coloneqq  I_1 + I_2.
    \end{align*}
    We estimate $I_1$ and $I_2$ separately. From now on $C=C(R)$ denotes a positive constant 
    which can take different value in different places. Through a process similar to the 
    Lipschitz estimate of $A_{11}$ in the proof of Theorem \ref{thm:well-posedness-weak}, 
    we obtain the following estimate by applying the trick of adding and subtracting 
    the same term repeatedly. 
    \begin{align*}
        I_1 \leq \, & \sup_{t \in [0,T]} \sup_{\|u\|_{W^{2,p}_{\pi}(\Omega)}=1} 
        \left\|\left(c\left(\left|\nabla^{\alpha} u_1\right|, 
        \overline{v_1}_{\varepsilon}\right) - c\left(\left|\nabla^{\alpha} u_2\right|, 
        \overline{v_2}_{\varepsilon}\right)\right) \Delta u \right\|_{L^p_{\pi}(\Omega)} \\
        &+ \sup_{t \in [0,T]} \sup_{\|u\|_{W^{2,p}_{\pi}(\Omega)}=1} 
        \left\|\left(\nabla c\left(\left|\nabla^{\alpha} u_1\right|, v_1\right) 
        - \nabla c\left(\left|\nabla^{\alpha} u_2\right|, v_2\right)\right) \cdot \nabla u
        \right\|_{L^p_{\pi}(\Omega)}  \\
        \leq \, & C \sup_{t \in [0,T]}  
        \left\|\overline{v_1}_{\varepsilon}(t)^{\gamma} 
        b\left(\left|\nabla^{\alpha} u_1(t)\right|\right) 
        - \overline{v_2}_{\varepsilon}(t)^{\gamma} 
        b\left(\left|\nabla^{\alpha} u_2(t)\right|\right)\right\|_{\infty} \\
        &+ C \sum_{j=1}^N \sup_{t \in [0,T]} \left\|v_1(t)^{\gamma} 
        b\left(\left|\nabla^{\alpha} u_1(t)\right|\right) \right\|_{\infty} 
        \left\| \partial_{x_j} \left(v_1(t) - v_2(t)\right)\right\|_{\infty} \\
        &+ C \sum_{j=1}^N \sup_{t \in [0,T]} 
        \left\| \partial_{x_j} v_2(t) \right\|_{\infty} 
        \left\|v_1(t)^{\gamma-1} b\left(\left|\nabla^{\alpha} u_1(t)\right|\right) 
        - v_2(t)^{\gamma-1} b\left(\left|\nabla^{\alpha} u_2(t)\right|\right) \right\|_{\infty} \\
        &+ C \sum_{j=1}^N \sup_{t \in [0,T]} \left\|v_1(t)^{\gamma}\right\|_{\infty}
        \left\| \partial_{x_j} \left(b\left(\left|\nabla^{\alpha} u_1(t)\right|\right)
         - b\left(\left|\nabla^{\alpha} u_2(t)\right|\right)\right)\right\|_{\infty} \\
        &+ C \sum_{j=1}^N \sup_{t \in [0,T]} 
        \left\|\partial_{x_j} b \left(\left|\nabla^{\alpha} u_2(t)\right|\right) \right\|_{\infty}
        \left\|  v_1(t)^{\gamma} - v_2(t)^{\gamma}\right\|_{\infty} \\
        \leq \, & C \sup_{t \in [0,T]} 
        \left\| u_1(t) - u_2(t)\right\|_{W^{2-\frac{2}{p}, p}_{\pi}(\Omega)} 
        + C \sup_{t \in [0,T]} 
        \left\| v_1(t) - v_2(t)\right\|_{W^{2-\frac{2}{p}, p}_{\pi}(\Omega)} \\
        & + C \sum_{j=1}^N \left\|b'\left(\left|\nabla^{\alpha}u_1(t)\right|\right)\right\|_{\infty} 
        \left\|\partial_{x_j}\left(\left|\nabla^{\alpha}u_1(t)\right| 
        - \left|\nabla^{\alpha}u_2(t)\right|\right)\right\|_{\infty} \\
        & + C \sum_{j=1}^N \left\| \partial_{x_j}\left(\left|\nabla^{\alpha}u_2(t)\right|\right) \right\|_{\infty}
         \left\|b'\left(\left|\nabla^{\alpha}u_1(t)\right|\right)
        - b'\left(\left|\nabla^{\alpha}u_2(t)\right|\right)\right\|_{\infty} \\
        \leq \, & C \left\| u_1 - u_2\right\|_{C\left([0,T],W^{2-\frac{2}{p}, p}_{\pi}(\Omega)\right)} 
        + C \left\| v_1 - v_2\right\|_{C\left([0,T],W^{2-\frac{2}{p}, p}_{\pi}(\Omega)\right)} \\
        & + C \sup_{t \in [0,T]} \left\|\left|\nabla^{\alpha}u_1(t)\right| 
        - \left|\nabla^{\alpha}u_2(t)\right|\right\|_{C^{1,\rho_1}(\overline{\Omega})} \\
        \leq \, & C \left\| w_1 - w_2\right\|_{\mathcal{W}^{1,p}(0,T;(E_1,E_0))}.
    \end{align*}
    In the same way, we have  
    \[I_2 \leq C \left\| w_1 - w_2\right\|_{\mathcal{W}^{1,p}(0,T;(E_1,E_0))}\]
    and furthermore
    \[\|A(w_1)-A(w_2)\|_{L^{\infty}(0,T;\mathcal{L}(E_1,E_0))}  
    \leq \, C \left\| w_1 - w_2\right\|_{\mathcal{W}^{1,p}(0,T;(E_1,E_0))},\]
    which implies that 
    \begin{equation} \label{eqn:operator-matrix-bounded-lip-strong}
    A \in \mathcal{C}_{\mathrm{Volt}}^{0,1}\left(\mathcal{W}^{1,p}(0,T;(E_1,E_0)),
    \mathcal{MR}^p(0,T;(E_1,E_0))\right).
    \end{equation}
    The Lipschitz estimate of the source term is similar to that done in the 
    proof of Theorem \ref{thm:well-posedness-weak}. We have 
    \begin{equation} \label{eqn:source-term-bounded-lip-strong}
        F \in \mathcal{C}_{\mathrm{Volt}}^{0,1}\left(\mathcal{W}^{1,p}(0,T;(E_1,E_0)); 
        L^{\infty} \left(0,T;E_0\right), L^{p} \left(0,T;E_0\right)\right).
    \end{equation}
        
    It follows from \eqref{eqn:operator-matrix-bounded-lip-strong}, 
    \eqref{eqn:source-term-bounded-lip-strong} and 
    Theorem \ref{thm:amann-main} that there exist a maximal $T_{\max} \in \left(0,T_{0}\right]$ and 
    a unique solution $w=(u,v)^{\mathrm{T}} \in \mathcal{W}_{\mathrm{loc}}^{1,p}(0,T_{\max};(E_1,E_0))$ 
    of the quasilinear abstract Cauchy problem \eqref{eqn:quasilinear-abstract-strong}.  
    {Now set $\varepsilon = \inf_{x \in \Omega} f(x)$.} The minimum principle 
    $v \geq \varepsilon$ implies that $v = \overline{v}_{\varepsilon}$. 
    {At this point, the concept of solution to quasilinear abstract Cauchy 
    problem \eqref{eqn:quasilinear-abstract-strong} coincides with the concept 
    of strong solution to \eqref{eqn:main}.}
    Therefore, $(u,v)$ is the unique strong solution to problem \eqref{eqn:main} on 
    $[0, T_{\max})$, which concludes the proof.
\end{proof}

Now we present a regularity result for the strong solutions to problem \eqref{eqn:main}. 

\begin{theorem} \label{thm:regularity-strong}
    Let the assumptions of Theorem \ref{thm:well-posedness-strong} hold. 
    Let $(u,v)$ be the strong solution to problem \eqref{eqn:main} on 
    its maximal interval of existence $[0, T_{\max})$. Then there 
    exists $\rho \in (0,1)$ such that 
    $u, v \in C^{2+\rho, 1+\frac{\rho}{2}}(\overline{Q_T})$ for 
    every $T \in (0, T_{\max})$.
\end{theorem}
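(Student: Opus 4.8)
The plan is to freeze the strong solution $(u,v)$ of Theorem~\ref{thm:well-posedness-strong} into the coefficients, so that each equation of \eqref{eqn:auxiliary-strong} becomes a \emph{scalar linear} uniformly parabolic equation, and then to invoke the classical parabolic Schauder estimates on the periodic cell. Since the minimum principle gives $v=\bar v_\varepsilon$, the pair satisfies, for a.e.\ $(x,t)\in Q_T$,
\[\partial_t u-c(|\nabla^\alpha u|,v)\,\Delta u-\nabla c(|\nabla^\alpha u|,v)\cdot\nabla u=-\lambda K'(Ku-f),\qquad \partial_t v-a(u,v)\,\Delta v-\nabla a(u,v)\cdot\nabla v=0.\]
Writing $\mathfrak a_1:=c(|\nabla^\alpha u|,v)$, $\mathfrak b_1:=\nabla c(|\nabla^\alpha u|,v)$, $\mathfrak a_2:=a(u,v)$ and $\mathfrak b_2:=\nabla a(u,v)$, I regard these as prescribed coefficients. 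If $\mathfrak a_i,\mathfrak b_i$ and the right-hand side lie in the parabolic Hölder class $C^{\rho,\rho/2}(\overline{Q_T})$ \emph{up to and including} $t=0$, then the linear Schauder theory (e.g.\ \cite{FRIEDMAN1983}) on the torus produces a $C^{2+\rho,1+\rho/2}(\overline{Q_T})$ solution with datum $f$, which by uniqueness must coincide with $(u,v)$; this delivers the claim for every $T\in(0,T_{\max})$.

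The core estimate is coefficient regularity at the initial time. By Theorem~\ref{thm:well-posedness-strong} and the Amann embedding \eqref{eqn:amann-embedding} (now with $E_1^{1/2}=W^{2,p}_\pi(\Omega)$, $E_0^{1/2}=L^p_\pi(\Omega)$), one has $u,v\in C([0,T];W^{2-\frac2p,p}_\pi(\Omega))\hookrightarrow BUC([0,T];C^{1,\rho_0}_\pi(\overline\Omega))$ for some $\rho_0\in(0,1-\tfrac{N+2}{p})$, so $u,\nabla u,v,\nabla v$ are spatially Hölder \emph{uniformly} on $[0,T]$, $t=0$ included. The hypothesis $p>\tfrac{2+N}{1-\alpha}$ is exactly what makes \eqref{eqn:fractional-differential-map} yield $|\nabla^\alpha u|\in C([0,T];C^{1,\rho}_\pi(\overline\Omega))$, hence $\nabla|\nabla^\alpha u|$ is spatially Hölder up to $t=0$ as well. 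For the time modulus I would interpolate the endpoints $L^p(0,T;W^{2,p}_\pi)$ and $W^{1,p}(0,T;L^p_\pi)\hookrightarrow C^{1-\frac1p}([0,T];L^p_\pi)$ (see \cite{AMANN1995}) to get $u,v\in C^{s}([0,T];W^{s',p}_\pi)$ with $s>0$ and $s'$ close to $2-\tfrac2p$, transferring a positive time-Hölder exponent onto $\nabla u,\nabla v,\nabla|\nabla^\alpha u|$. Because $\beta\in\{1\}\cup[2,\infty)$ renders $b,b'$ admissible Nemytskii symbols and $\gamma\geqslant1$, $\mu>1$ do the same for $s\mapsto s^\gamma,s^{\gamma-1},s^\mu,s^{\mu-1}$, the estimates of \cite[Theorem 3.1]{CHIAPPINELLI1995} give $\mathfrak a_i,\mathfrak b_i\in C^{\rho,\rho/2}(\overline{Q_T})$; the source $-\lambda K'(Ku-f)$ is spatially smooth and Hölder in $t$, and uniform parabolicity on $\overline{Q_T}$ follows from \eqref{eqn:strongly-elliptic-1}--\eqref{eqn:strongly-elliptic-2} with $v\geqslant\varepsilon>0$.

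With Hölder data in hand on the closed cylinder, each scalar equation falls under the linear parabolic Schauder theorem. The decisive structural feature is periodicity: the periodic boundary condition makes $\Omega$ a compact manifold without boundary, so \emph{no} spatial boundary compatibility conditions are imposed, and the only compatibility needed for regularity up to $t=0$ is that the initial datum belong to $C^{2+\rho}_\pi(\overline\Omega)$—whereupon $\partial_t u(\cdot,0)=\mathfrak a_1(\cdot,0)\Delta f+\mathfrak b_1(\cdot,0)\cdot\nabla f-\lambda K'(Kf-f)$ is automatically in $C^\rho_\pi(\overline\Omega)$ and the Schauder estimate closes on $\overline{Q_T}$. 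The Schauder solution so produced is in particular a strong solution, so it agrees with $(u,v)$ by the uniqueness part of Theorem~\ref{thm:well-posedness-strong} (equivalently, by the maximum principle for the frozen linear equations). Choosing a single $\rho\in(0,1)$ below all the Hölder exponents generated above then gives $u,v\in C^{2+\rho,1+\frac{\rho}{2}}(\overline{Q_T})$.

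The hard part is twofold. First, the joint space--time Hölder control of the \emph{first-order} coefficients $\mathfrak b_1,\mathfrak b_2$ up to $t=0$: these carry $\nabla v$, $b'(|\nabla^\alpha u|)\nabla|\nabla^\alpha u|$, $v^{\gamma-1}$ and $u^{\mu-1}$, so their time modulus at the initial instant must be propagated simultaneously through the mixed-derivative embedding, the fractional operator \eqref{eqn:fractional-differential-map}, and the merely $C^1$ outer nonlinearities, and it is precisely here that $\beta\in\{1\}\cup[2,\infty)$, $\gamma\geqslant1$, $\mu>1$ are consumed to keep the Nemytskii maps Lipschitz on $C^{\rho,\rho/2}$. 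Second, the passage to the closed cylinder \emph{at} $t=0$ hinges entirely on the periodic geometry making the parabolic compatibility conditions vacuous, so that the regularity of the initial slice $f$ is not obstructed by any boundary matching; this is what allows the Schauder estimate—interior in time by nature—to be carried all the way down to the initial time and to cover $\overline{Q_T}=\overline\Omega\times[0,T]$.
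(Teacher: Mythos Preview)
Your proposal follows essentially the same strategy as the paper: freeze $(u,v)$ inside the coefficients so that each equation becomes a linear uniformly parabolic scalar problem, use the maximal-regularity embedding together with an interpolation between $L^p(0,T;W^{2,p}_\pi)$ and $W^{1,p}(0,T;L^p_\pi)$ to manufacture joint space--time H\"older regularity of $\mathfrak a_i,\mathfrak b_i$ and of the source term, then invoke classical linear parabolic Schauder theory (the paper cites Solonnikov rather than Friedman, but the content is the same) and identify the Schauder solution with the strong solution by uniqueness.

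One point of divergence: you explicitly require the initial datum to lie in $C^{2+\rho}_\pi(\overline\Omega)$ in order to close the Schauder estimate on the full closed cylinder $\overline\Omega\times[0,T]$. Note that the standing hypothesis of Theorem~\ref{thm:well-posedness-strong} only gives $f\in W^{2-\frac2p,p}_\pi(\Omega)$, which does \emph{not} embed into $C^{2+\rho}(\overline\Omega)$, so this requirement is not supplied by the assumptions. The paper does not isolate or discuss this compatibility at $t=0$; it simply appeals to the classical linear theory for H\"older coefficients and right-hand side. In other words, you have been more careful in articulating a point the paper passes over, but as written your argument imposes an extra hypothesis on $f$ that the theorem statement does not grant. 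Apart from this, the two arguments are the same in substance.
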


\begin{proof}
    We will accomplish the proof using the techniques provided in \cite[Section 14]{AMANN1993}. 
    Henceforth, $T$ can take any value in 
    $(0, T_{\max})$, $C>0$ denotes a constant which can take different
    value in different places. Theorem \ref{thm:well-posedness-strong} and  
    Amann's embedding theorem imply that 
    \[u, v \in L^p\left(0,T;W_{\pi}^{2,p}(\Omega)\right)   
    \cap W^{1,p}\left(0,T;L_{\pi}^{p}(\Omega)\right) 
    \cap C\left([0,T],W_{\pi}^{2-\frac{2}{p},p}(\Omega)\right).\]
    Utilizing H{\"o}lder's inequality and the interpolation inequality of Slobodeckij spaces, it can be inferred that
    \begin{align*}
    \|u(t)-u(s)\|_{W_{\pi}^{\left(2-\frac{2}{p}\right)\theta,p}(\Omega)}
    \leq \, & C \|u(t)-u(s)\|_{L_{\pi}^{p}(\Omega)}^{1-\theta}
    \|u(t)-u(s)\|_{W_{\pi}^{2-\frac{2}{p},p}(\Omega)}^{\theta} \\
    \leq \, & C \left(\int_s^t 1 d \tau\right)^{\frac{1-\theta}{p'}}
    \left(\int_s^t \left\|\dot{u}(\tau)\right\|_{L_{\pi}^{p}(\Omega)}^p 
    d \tau\right)^{\frac{1-\theta}{p}} \\
    \leq \, & C |t-s|^{\frac{(1-\theta)(p-1)}{p}}
    \end{align*}
    holds for all $0 < s \leq t < T$ and $0<\theta<1$, from which it follows that
    \[u \in C^{\frac{(1-\theta)(p-1)}{p}}\left([0,T],
    W_{\pi}^{2 \theta -\frac{2 \theta}{p},p}(\Omega)\right).\]
    Using arguments similar to those employed in the proofs of 
    Theorems \ref{thm:well-posedness-weak} and \ref{thm:well-posedness-strong}, 
    it can be deduced that
    \[u \in C^{\frac{(1-\theta)(p-1)}{p}}\left([0,T],
    C_{\pi}^{1,2 \theta - 1 - \frac{2\theta+N}{p}}(\overline{\Omega})\right)\]
    and 
    \[\left|\nabla^{\alpha}u\right| \in C^{\frac{(1-\theta)(p-1)}{p}}\left([0,T],
    C_{\pi}^{1,2 \theta - 1 - \frac{2\theta+N}{p} - \alpha-\delta}(\overline{\Omega})\right)\]
    for $\theta \in \left(\frac{N+(\alpha+1)p}{2(p-1)},1\right)$ and 
    $\delta \in \left(0,2 \theta - 1 - \frac{2\theta+N}{p} - \alpha\right)$. Similarly,
    \[v \in C^{\frac{(1-\theta)(p-1)}{p}}\left([0,T],
    C_{\pi}^{1,2 \theta - 1 - \frac{2\theta+N}{p}}(\overline{\Omega})\right).\]
    Set 
    \[\rho_1 = \frac{(1-\theta)(p-1)}{p}, \ 
    \rho_2 =2 \theta - 1 - \frac{2\theta+N}{p}, \
    \rho_3 = 2 \theta - 1 - \frac{2\theta+N}{p} - \alpha-\delta, \]
    we choose $\rho = \min\{1,\mu-1\} \min\{\rho_1, \rho_3\}$ if 
    $\gamma=1$, and $\rho = \min\{1,\gamma-1,\mu-1\} \underset{i=1,2,3}{\min}~ \rho_i$ 
    if $\gamma>1$, then
    \[\partial_{x_j} c\left(\left|\nabla^{\alpha} u\right|,v\right), \
    \partial_{x_j} a(u,v), \
    -\lambda K'(Ku-f)
    \in C^{\rho,\frac{\rho}{2}}\left(\overline{Q_T}\right)\]
    since H{\"o}lder spaces {on $\overline{\Omega}$} are algebras. Rewrite
    \[\begin{array}{c}
        c\left(\left|\nabla^{\alpha} u\right|,v\right) \coloneqq a_{11}(t,x), \ 
        a(u,v) \coloneqq a_{22}(t,x), \ 
        -\lambda K'(Ku-f) \coloneqq f_1(t,x), \\[1ex]
        \nabla c\left(\left|\nabla^{\alpha} u\right|,v\right) \coloneqq \mathbf{b_1}(t,x), \ 
        \nabla a(u,v) \coloneqq \mathbf{b_2}(t,x), 
    \end{array}\]
    then $u,v$ {turn out to satisfy a.e. in $Q_T$ the following}
    linear parabolic system {formulated for $\widetilde{u}$ and $\widetilde{v}$:}
    \begin{equation}
        \left\{
        \begin{array}{ll}
            \displaystyle \vspace{0.2em} 
            \widetilde{u}_t = 
            a_{11}(t,x) \Delta \widetilde{u} + \mathbf{b_1}(t,x) \cdot \nabla \widetilde{u} 
            - f_1(t,x), & \textrm{in~~} (0,T_0) \times \Omega,  \\ 
            \widetilde{v}_t = 
            a_{22}(t,x) \Delta \widetilde{v} + \mathbf{b_2}(t,x) \cdot \nabla \widetilde{v} 
            , & \textrm{in~~} (0,T_0) \times \Omega,   \\
            \widetilde{u}, \widetilde{v} \textrm{~~periodic on} \Omega & \textrm{for~~} (0,T_0), \\
            \widetilde{u}(0,\cdot) = \widetilde{v}(0,\cdot) = f, & \textrm{on~~} \Omega, 
        \end{array} \label{eqn:main-linearized}
        \right. 
    \end{equation}
    {since $(u,v)$ is a strong solution to problem \eqref{eqn:main}. 
    System \eqref{eqn:main-linearized}} has H{\"o}lder continuous coefficients 
    and a H{\"o}lder continuous right-hand side. 
    It follows from classical results \cite[Section 14-18]{SOLONNIKOV1965} 
    that \eqref{eqn:main-linearized} possesses a unique classical solution 
    {$u^{\bullet}, v^{\bullet}$} $\in C^{2+\rho, 1+\frac{\rho}{2}}(\overline{Q_T})$.
    {Since both $(u,v)$ and $(u^{\bullet}, v^{\bullet})$ satisfy \eqref{eqn:main-linearized} in $Q_T$, 
    we conclude that $u = u^{\bullet}$, $v = v^{\bullet}$}. 
    Therefore, $u,v \in C^{2+\rho, 1+\frac{\rho}{2}}(\overline{Q_T})$.
\end{proof}

\begin{corollary} \label{cor:regularity-infty}
    Assume that $\frac{\beta}{2}, \frac{\mu}{2}, \gamma$ are integers,
    $0<f \in C_{\pi}^{\infty}(\overline{\Omega})$. 
    Let $(u,v)$ be the strong solution to problem \eqref{eqn:main} on 
    its maximal interval of existence $[0, T_{\max})$. Then 
    $u, v \in C^{\infty}(\overline{Q_T})$ for 
    every $T \in (0, T_{\max})$. 
\end{corollary}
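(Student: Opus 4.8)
The plan is to bootstrap the Schauder regularity already obtained in Theorem \ref{thm:regularity-strong} by exploiting the fact that, under the present integrality hypotheses, every nonlinearity appearing in \eqref{eqn:main} is a $C^{\infty}$ function of $u$, $v$ and the fractional-gradient components $\partial^{\alpha}_{x_j}u$. The key observation is that $\tfrac{\beta}{2},\tfrac{\mu}{2},\gamma\in\mathbb{N}$ removes all the absolute values: one has $|\nabla^{\alpha}u|^{\beta}=\bigl(\sum_j(\partial^{\alpha}_{x_j}u)^2\bigr)^{\beta/2}$ and $|u|^{\mu}=(u^2)^{\mu/2}$, which are polynomials in smooth quantities, while $r\mapsto 1/(1+k_1 r^{\beta/2})$ is real-analytic for $r\geqslant 0$ since its denominator never vanishes. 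Moreover, the minimum principle established in the proof of Theorem \ref{thm:well-posedness-strong} guarantees $v\geqslant\varepsilon>0$, so that $|v|^{\gamma}=v^{\gamma}$ is smooth as well. Consequently the maps
\[(v,\xi)\longmapsto c(|\xi|,v),\qquad (u,v)\longmapsto a(u,v)\]
and their spatial gradients are $C^{\infty}$ in their arguments. Via the superposition-operator estimate already invoked in the excerpt (cf. \cite{CHIAPPINELLI1995}), whenever $u,v$ lie in an anisotropic parabolic Hölder class the composed coefficients inherit the same class, the only loss being the $\alpha$ orders consumed by $\partial^{\alpha}_{x_j}$. This smoothness is precisely the feature absent in Theorem \ref{thm:regularity-strong}, where $b(\cdot)$ and the powers were only finitely differentiable.

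I would then set up the induction on the linear system \eqref{eqn:main-linearized}, whose principal part is diagonal, so each component obeys a scalar parabolic equation. Theorem \ref{thm:regularity-strong} provides the base step $u,v\in C^{2+\rho,1+\frac{\rho}{2}}(\overline{Q_T})$. Assume inductively that $u,v\in C^{k+\rho_k,\frac{k+\rho_k}{2}}(\overline{Q_T})$ for some integer $k\geqslant 2$ and $\rho_k\in(0,1)$. Using the Hölder-scale analogue of \eqref{eqn:fractional-differential-map}, namely that $\partial^{\alpha}_{x_j}$ lowers the spatial Hölder index by $\alpha$, together with the smoothness of the nonlinearities, I would estimate the regularity of the coefficients of \eqref{eqn:main-linearized}. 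The drift terms $\mathbf{b}_1=\nabla c(|\nabla^{\alpha}u|,v)$ and $\mathbf{b}_2=\nabla a(u,v)$ are the bottleneck: $\mathbf{b}_1$ loses one full derivative plus $\alpha$ further orders through the factor $b'(|\nabla^{\alpha}u|)\nabla|\nabla^{\alpha}u|$, whereas $\mathbf{b}_2$ loses only a single derivative. Feeding these Hölder coefficients and the right-hand side into the classical Schauder estimates \cite[Section 14.18]{SOLONNIKOV1965} upgrades each component by two orders, so that along one cycle the $u$-component gains $1-\alpha$ orders while the $v$-component stays one order ahead of the current $u$-regularity.

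Since $0<\alpha<1$, the increment $1-\alpha$ is bounded away from zero, so iterating drives the Hölder indices of both $u$ and $v$ above any prescribed level, and letting the iteration run indefinitely yields $u,v\in C^{\infty}(\overline{Q_T})$ for every $T\in(0,T_{\max})$. Throughout, the spatial smoothness of the source term $f_1=-\lambda K'(Ku-f)$ is inherited from the smoothing of $K'$ (the mechanism already used to place $f_1\in C^{\rho,\frac{\rho}{2}}$ in Theorem \ref{thm:regularity-strong}), while its temporal regularity improves together with that of $u$; hence $f_1$ never obstructs the bootstrap.

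The main obstacle I anticipate is the bookkeeping in the anisotropic parabolic Hölder scale, where one must track simultaneously the single derivative lost to the outer gradient in $\mathbf{b}_1,\mathbf{b}_2$ and the fractional $\alpha$ orders lost to $\nabla^{\alpha}$, and then confirm that the Schauder gain of two orders still leaves a strictly positive net improvement for the $u$-component. Verifying that $\partial^{\alpha}_{x_j}$ acts with the claimed $\alpha$-order loss uniformly along the whole scale, and commutes appropriately with the integer-order derivatives needed to form $\mathbf{b}_1$, is the technical crux; once this is in place the induction closes automatically because $1-\alpha$ is a fixed positive number.
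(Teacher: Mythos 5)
Your proposal is correct and takes essentially the same route as the paper: the paper's proof is precisely an iteration of the Schauder bootstrap from Theorem \ref{thm:regularity-strong} via \cite[Section 14.18]{SOLONNIKOV1965}, made possible because the integrality of $\frac{\beta}{2}$, $\frac{\mu}{2}$, $\gamma$ (together with $v\geqslant\varepsilon>0$) turns all the nonlinearities into smooth functions that can be differentiated repeatedly. Your explicit accounting of the $1-\alpha$ gain per cycle for the $u$-component is simply a more quantitative rendering of the iteration that the paper states in one sentence.
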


\begin{proof}
    The bootstrapping argument used in the proof of Theorem \ref{thm:regularity-strong}
    can be iterated in order to see that for $k \geq 2$,
    if $f \in  W^{k-\frac{2}{p},p}(\Omega)$, 
    then there exists $\rho \in (0,1)$ such that 
    $u,v \in C^{k+\rho, \frac{k+\rho}{2}}(\overline{Q_T})$ by means of classical results 
    \cite[Section 14-18]{SOLONNIKOV1965}, as repeated 
    differentiations can be achieved under the assumption. 
    The arbitrariness of $k$ concludes the proof.
\end{proof}

\section{Numerical experiments}\label{sec5}

\begin{figure}[!htbp]
    \centering
    \begin{minipage}[t]{.2\linewidth}
        \centering
        \includegraphics[width=\textwidth]{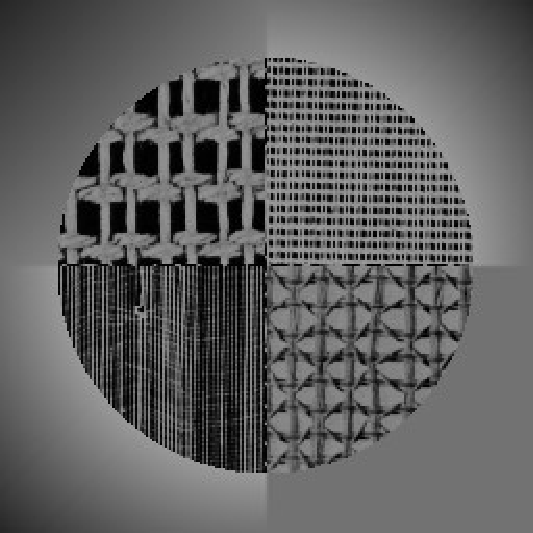}
        \subcaption{{Original}} \label{hybrid-original0}
    \end{minipage} 
    \begin{minipage}[t]{.2\linewidth}
        \centering
        \includegraphics[width=\textwidth]{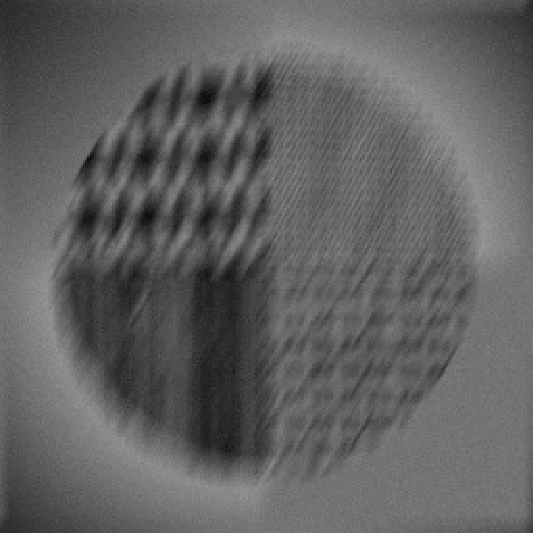}
        \subcaption{{Blurred}} \label{hybrid-blurred0}
    \end{minipage}
    \begin{minipage}[t]{.2\linewidth}
        \centering
        \includegraphics[width=\textwidth]{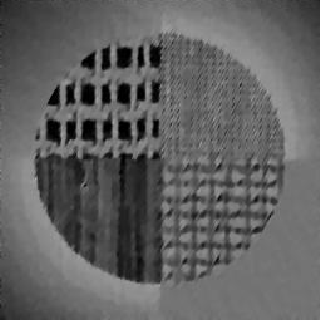}
        \subcaption{{$\lambda=5$}} \label{hybrid5l}
    \end{minipage}

    \begin{minipage}[b]{.2\linewidth}
        \centering
        \includegraphics[width=\textwidth]{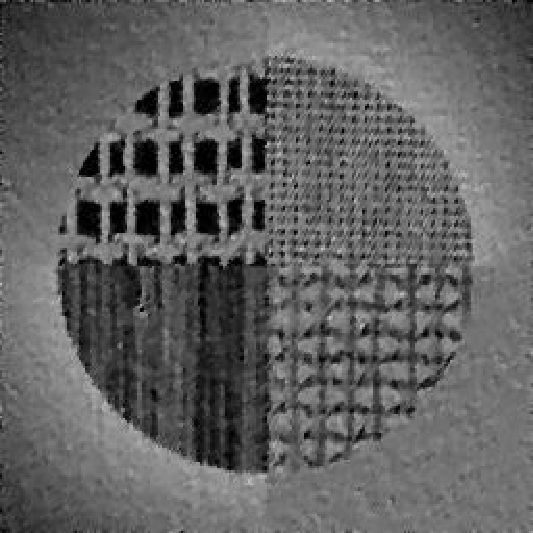}
        \subcaption{{$\lambda=15$}} \label{hybrid-Ours0}
    \end{minipage}
    \begin{minipage}[b]{.2\linewidth}
        \centering
        \includegraphics[width=\textwidth]{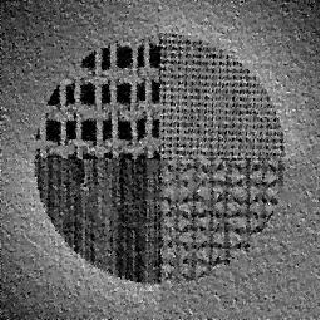}
        \subcaption{{$\lambda=50$}} \label{hybrid50l}
    \end{minipage}
    \begin{minipage}[b]{.2\linewidth}
        \centering
        \includegraphics[width=\textwidth]{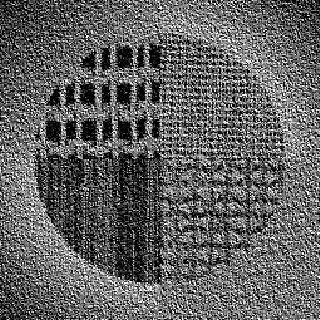}
        \subcaption{{$\lambda=500$}} \label{hybrid500l}
    \end{minipage}
    \caption{{Recovery results of Algorithm \ref{alg:alg1} on a hybrid 
    image with motion blur and corrupted by 
    the noise of standard deviation $\sigma = 3$ under different $\lambda$ values.
    (a) original image. (b) noisy blurred image. 
    (c)-(f) recovered images with $\lambda=5, 15, 50$ and $500$, respectively.}} \label{hybrid0}
\end{figure}

In this section, we present several numerical experiment examples to 
illustrate the effectiveness of the proposed model in image restoration. 
Firstly, a numerical discretization of \eqref{eqn:main} is derived. 
Assume that the discrete image to be $I \times  I$ pixels, $\tau$ to be 
the time step size and $h$ the space grid size. Then the equidistant 
spatio-temporal grid is given by
\[\mathcal{T}_{\tau,h}=\left\{(t_n,x_i,y_j): t_n=n\tau, x_i=ih, y_j=jh,
n = 0,1,2, \cdots, \ i, j = 0,1, \cdots, I-1\right\}.\]
Denote by $u^n = \{u_{i,j}^n\}_{\forall i, j}$ the grid function
at time $t_n$, which approximates the values of $u(t_n,\cdot)$ at grid points. 
Some other notations and assumptions are 
given for the following numerical scheme. 
\[\begin{array}{l}
    u^0_{i,j} = f_{i,j} \coloneqq f(x_i,y_j),\\[1ex]
    u^n_{i,0} = u^n_{i,I},\  u^n_{0,j} = u^n_{I,j},
    u^n_{i,-1} = u^n_{i,I-1},\  u^n_{-1,j} = u^n_{I-1,j},\\[1ex]
    {D_x^{\pm}} u^n_{i,j} \coloneqq \pm \frac{u^n_{i \pm 1,j} - u^n_{i,j}}{h},\  
    {D_y^{\pm}} u^n_{i,j} \coloneqq \pm \frac{u^n_{i, j\pm 1} - u^n_{i,j}}{h}, \\[1ex]
    \delta_x^2 u^n_{i,j} \coloneqq u^n_{i+1,j} - 2 u^n_{i,j} + u^n_{i-1,j}, \ 
    \delta_y^2 u^n_{i,j} \coloneqq u^n_{i,j+1} - 2 u^n_{i,j} + u^n_{i,j-1}. 
\end{array}\]
Similar notations and assumptions are used for $v$ and other functions. 
Denote by $\hat{g}$ the 2-D discrete Fourier transform (DFT) of a 
grid function ${g}$ and $\mathrm{F}^{-1}$ the 2-D inverse 
discrete Fourier transform (IDFT) operator.
To approximate $|\nabla^{\alpha} u^n|$, we use a central difference scheme 
provided by \cite{BAI2007}. The fractional-order difference can be defined as
\begin{equation} \label{eqn:fractional-order-difference}
    \begin{array}{l}
D_{x}^{\alpha}u^n = \mathrm{F}^{-1} \left[\left(1-e^{-\mathbbm{i} \omega_1 h}\right)^{\alpha}
e^{\mathbbm{i} \alpha \omega_1 \frac{h}{2}}  \hat{u}^{n}(\omega_1, \omega_2)\right], \\[1.5ex]
D_{y}^{\alpha}u^n = \mathrm{F}^{-1} \left[\left(1-e^{-\mathbbm{i} \omega_2 h}\right)^{\alpha}
e^{\mathbbm{i} \alpha \omega_2 \frac{h}{2}}  \hat{u}^{n}(\omega_1, \omega_2)\right],
\end{array}
\end{equation}
and the discrete fractional-order 
gradient $\nabla^{\alpha}_{h} {u^n} = (D_{x}^{\alpha}u^n, D_{y}^{\alpha}u^n)$.
{In this section, we focus on the case where $K$ is a 
convolution operator, namely $Ku = k * u$, where $k$ denotes the convolution kernel.}
The numerical approximation of 
\eqref{eqn:main} could be derived by local linearization method.
Denote by $a^n = a(u^n,v^n)$ the values at
the grid points of the diffusion coefficient $a(\cdot,\cdot)$.
If we have a numerical solution $(u^n, v^n)$, 
an explicit finite difference scheme 
\begin{equation} \label{eqn:explicit-scheme-v}
    \frac{v_{i,j}^{n+1}-v_{i,j}^n}{\tau} = \, {D_x^-} 
    \left(a_{i,j}^n {D_x^+} v_{i,j}^{n}\right) + 
    {D_y^-} \left(a_{i,j}^n {D_y^+} v_{i,j}^n\right), 
\end{equation}
can be used to obtain $v^{n+1}$,
then we can use a semi-implicit scheme
\begin{align} \label{eqn:semi-implicit-scheme-u}
    \frac{u_{i,j}^{n+1}-u_{i,j}^n}{\tau} = \, & {D_x^-} 
        \left(c_{i,j}^n {D_x^+} u_{i,j}^{n}\right) + {D_y^-} 
        \left(c_{i,j}^n {D_y^+} u_{i,j}^n\right) \notag \\
        &- \lambda \left(\mathbf{K}' * \mathbf{K} * u^{n+1}\right)_{i,j} 
        + \lambda \left(\mathbf{K}' * f\right)_{i,j},
\end{align}
to obtain $u^{n+1}$. Here $c^n = c\left(|\nabla^{\alpha}_{h} u^n|, v^{n+1}\right)$, 
$\nabla^{\alpha}_{h}$ is the discrete fractional-order 
gradient, $\mathbf{K} =(k_{i,j})$ is a discrete convolution kernel with 
adjoint $\mathbf{K}'$, which is obtained from the convolution
kernel $k$, i.e. $k_{i,j}=k(ih,jh)$. The function $f$ here is only defined on the 
grid, but we do not denote it. 

In the numerical implementation of the proposed model,
the explicit scheme \eqref{eqn:explicit-scheme-v} can 
be directly computed, and the semi-implicit scheme 
\eqref{eqn:semi-implicit-scheme-u} can 
be solved using the DFT and IDFT. The 
image restoration process based on the proposed model 
is summarized as Algorithm \ref{alg:alg1}. {The symbol $\odot$ appearing 
in step 10 denotes element-wise (pointwise) multiplication, i.e. the Hadamard product.}

\begin{algorithm} 
    \caption{The restoration algorithm based on the proposed model}
    \label{alg:alg1} 
    \begin{algorithmic}[1]
        \renewcommand{\algorithmicrequire}{\textbf{Input:}}
		\REQUIRE {Degraded} image $f$, $\tau$, $\alpha$, $\beta$, $k_1$, $\lambda$, 
        $\mathbf{K}$, $\gamma$, $\mu$, $\lambda_1$, $tol$, $maxIter$
        \renewcommand{\algorithmicrequire}{\textbf{Initialize:}}
        \REQUIRE $u^0 = v^0 = f$, $n = 0$
        \renewcommand{\algorithmicensure}{\textbf{Output:}}
		\ENSURE $u^n$
        \STATE Compute $M = \displaystyle \max_{i,j} f_{i,j}$
        \FOR {each image $u^n$, $v^n$}
        \STATE Compute $a^n = \lambda_1 \frac{|u^n|^{\mu}}{M^{\mu}} 
        + \left(1-\lambda_1\right) \frac{|v^n|^{\gamma}}{M^{\gamma}}$ \label{ste:begin}
        \STATE Compute $v^{n+1}$ by \eqref{eqn:explicit-scheme-v} 
        \STATE Compute $D_{x}^{\alpha}u^n$ and $D_{y}^{\alpha}u^n$ 
        by \eqref{eqn:fractional-order-difference}
        \STATE Compute $c^n = \frac{|v^{n+1}|^{\gamma}}{M^{\gamma}} 
        \frac{1}{1 + k_1 \left(D_{x}^{\alpha}u^n + D_{y}^{\alpha}u^n\right)^{\beta / 2}}$
        \FOR {$i,j = 0,1, \cdots, I-1$}
        \STATE Compute {\small $ d_{i,j}^n \coloneqq u_{i,j}^n +
        \tau  \left({D_x^-}\left(c_{i,j}^n {D_x^+} u_{i,j}^{n}\right) + {D_y^-} 
        \left(c_{i,j}^n {D_y^+} u_{i,j}^n\right)\right) +
        \tau \lambda \left(\mathbf{K}' * f\right)_{i,j}$} 
        \ENDFOR
        \STATE Compute $u^{n+1} = \mathrm{F}^{-1} \left[\frac{\hat{d^n}}
        {\hat{I} + \tau \lambda \hat{\mathbf{K}}' {\odot} \hat{\mathbf{K}}}\right]$ \label{ste:end}
        \IF{$\frac{\|u^{n+1} - {u^n}\|_2}{\|u^{n+1}\|_2} > tol$ \AND $i < maxIter$}
        \STATE Set $n = n + 1$
        \STATE Repeat \ref{ste:begin}-\ref{ste:end} steps for $u^n$ and $v^n$
        \ENDIF
        \ENDFOR
	\end{algorithmic}
\end{algorithm}

In order to quantify the restoration effect, for the original image {$u^{\dagger}$}
and the compared image $u$, the restoration performance is measured in terms of the peak 
signal noise ratio (PSNR) 
\[\mathrm{PSNR}(u,{u^{\dagger}}) = 10 \log_{10} \left(\dfrac{\Sigma_{i,j} 255^2}
    {\Sigma_{i,j} (u_{i,j} - {u^{\dagger}_{i,j}})^2}\right), \] 
and the structural similarity index measure (SSIM)
\[\mathrm{SSIM}(u,{u^{\dagger}}) = \dfrac{(2\mu_u \mu_{u^{\dagger}} +c_1)(2 \sigma_{u{u^{\dagger}}} + c_2)}
    {\mu_u^2 + \mu_{{u^{\dagger}}}^2 + c_1 + \sigma_u^2 + \sigma_{{u^{\dagger}}}^2 + c_2},\]
where  $c_1, c_2$ are two variables to stabilize the division with weak denominator, 
$\mu_u, \mu_{{u^{\dagger}}}, \sigma_u, \sigma_{{u^{\dagger}}}$ and $\sigma_{u{u^{\dagger}}}$ are the local means, 
standard deviations and cross-covariance for image $u,{u^{\dagger}}$, respectively. The 
better quality image will have higher values of PSNR and SSIM.

\begin{figure}[!htbp] 
    \centering
    \begin{minipage}[b]{.44\linewidth}
        \centering
        \includegraphics[width=\textwidth]{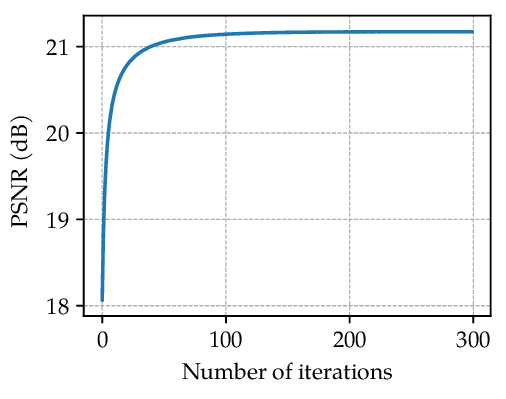} 
        \subcaption{} \label{fig:rd_psnr}
    \end{minipage} %\par
    \begin{minipage}[b]{.548\linewidth}
        \centering
        \includegraphics[width=\textwidth]{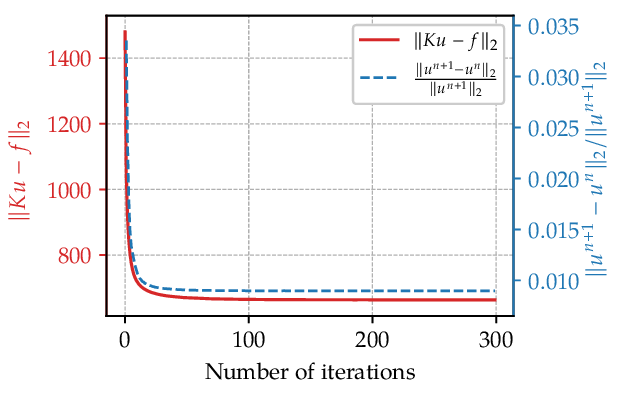} 
        \subcaption{} \label{fig:rd_err_rel}
    \end{minipage}
    \caption{{Evolution of PSNR, the data fidelity and the relative error with 
    respect to the iteration number in Algorithm \ref{alg:alg1}, corresponding 
    to the case $\lambda = 15$ in Fig. \ref{hybrid0}.}} \label{fig:psnr-err-rel}
\end{figure}

\begin{table}[htbp]
\centering
{\begin{tabular}{cccc}
\hline
 & $\lambda$ & PSNR/SSIM (noisy) & PSNR/SSIM (recovered) \\
\hline
$\sigma = 3$ & 15  & 18.07/0.4382 & 21.17/0.6281 \\
$\sigma = 5$ & 4.5 & 18.01/0.3738 & 20.38/0.6155 \\
$\sigma = 7$ & 2.5 & 17.90/0.3091 & 19.99/0.5905 \\
\hline
\end{tabular}}
\caption{{Comparison of the optimal $\lambda$ values and the corresponding PSNRs/SSIMs at different noise 
levels $\sigma$ for the experiments shown in Figs. \ref{hybrid0} and \ref{hybrid-5-7}.}}
\label{tab:psnr-ssim-5-7}
\end{table}

\begin{figure}[!htbp]
    \centering
    \begin{minipage}[t]{.2\linewidth}
        \centering
        \includegraphics[width=\textwidth]{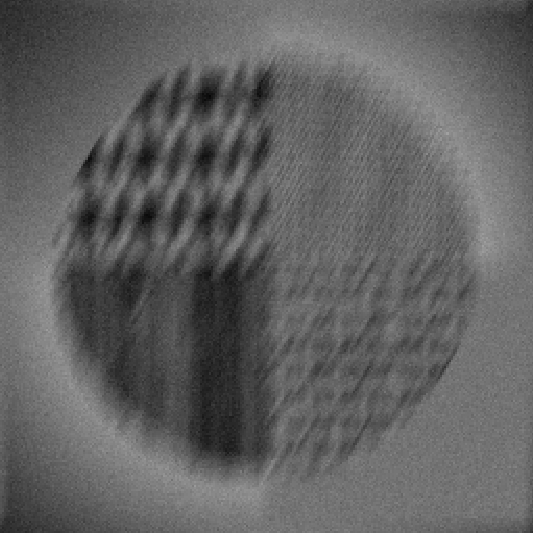}
    \end{minipage} 
    \begin{minipage}[t]{.2\linewidth}
        \centering
        \includegraphics[width=\textwidth]{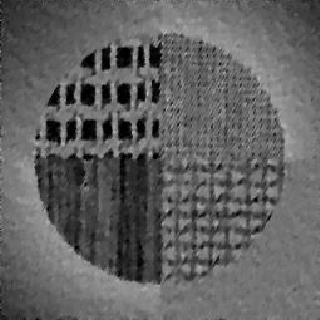}
    \end{minipage}
    \begin{minipage}[t]{.2\linewidth}
        \centering
        \includegraphics[width=\textwidth]{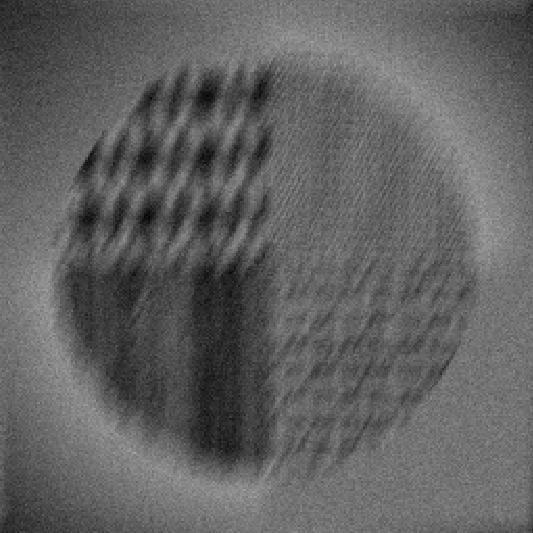}
    \end{minipage}
    \begin{minipage}[t]{.2\linewidth}
        \centering
        \includegraphics[width=\textwidth]{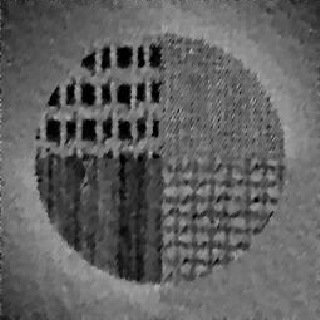}
    \end{minipage}
    \caption{{Noisy blurred images and the corresponding restorations by 
    Algorithm \ref{alg:alg1} with noise levels $\sigma = 5$ (left two columns) 
    and $\sigma = 7$ (right two columns).}} \label{hybrid-5-7}
\end{figure}

{We apply Algorithm \ref{alg:alg1} to a hybrid test image
to demonstrate the smoothing effect induced by 
the diffusion term in the proposed model, as well as the influence of the regularization 
parameter $\lambda$. As shown in Fig. \ref{hybrid-blurred0}, the test image 
(originally shown in Fig. \ref{hybrid-original0}) is blurred by a motion blur kernel with an 
angle of $\frac{\pi}{3}$ and a length of $20$, and further distorted by additive isotropic 
Gaussian noise with a standard deviation of $\sigma = 3$. In this experiment, we set $h = 1$, 
and use the parameter configuration $\tau = 0.5$, $\alpha = \lambda_1 = 0.9$, 
$\beta = \gamma = k_1 = 1$ and $\mu = 0.4$ in Algorithm \ref{alg:alg1}, with $300$ iterations performed. 
Figures \ref{hybrid5l}-\ref{hybrid500l} show the restoration results obtained with different 
values of $\lambda$. It can be observed that when $\lambda = 5$, the image becomes 
over-smoothed, with the central textured region completely flattened under the smoothing 
effect of diffusion. When $\lambda = 15$, the algorithm yields the best visual 
restoration. For $\lambda = 50$, textures appear sharper but the image becomes noticeably 
noisier, reducing perceived quality. As $\lambda$ increases to $500$, the image is largely 
buried in noise. Similar behavior is often seen when minimizing the energy 
\eqref{eqn:energy}; this is consistent with viewing the diffusion and reaction 
terms in system \eqref{eqn:main} as playing roles similar to the regularization 
and fidelity terms in \eqref{eqn:energy}. In practice, choosing $\lambda$ appropriately 
helps balance the contributions of these two terms and can prevent 
over-smoothing or over-fitting \cite{LANGER2017}. Figure \ref{fig:psnr-err-rel} presents the convergence 
behavior of Algorithm~\ref{alg:alg1} for the experiment with $\lambda = 15$. As 
shown in Fig. \ref{fig:rd_psnr}, PSNR increases rapidly at early iterations and 
then levels off. Fig. \ref{fig:rd_err_rel} displays the evolution of the data-fidelity 
residual $\|Ku-f\|_2$ and the relative iteration 
error $\frac{\|u^{n+1}-u^n\|_2}{\|u^{n+1}\|_2}$, both of which decay quickly at first and 
subsequently stabilize.}

{Next, keeping the blur kernel fixed and increasing the added noise to 
standard deviation $\sigma=5$ or $\sigma=7$, we examine how the value 
of $\lambda$ required for best visual restoration 
changes (all other parameters in Algorithm \ref{alg:alg1} remain the same). 
Table \ref{tab:psnr-ssim-5-7} reports the optimal $\lambda$ values and the 
corresponding PSNR and SSIM for $\sigma=3,5,7$. One observes that as the noise 
level increases, $\lambda$ needs to be reduced in order to achieve better 
visual quality without overfitting; this behavior is similar to the empirical observations \cite{GALATSANOS1992}
made when minimizing the energy \eqref{eqn:energy}. 
Figure \ref{hybrid-5-7} shows the noisy blurred images 
for $\sigma=5$ and $\sigma=7$ and the corresponding restorations produced 
by Algorithm \ref{alg:alg1} using the $\lambda$ values listed in 
Table \ref{tab:psnr-ssim-5-7}.}

\begin{figure}[htbp] 
    \centering
    \begin{minipage}[b]{.3278\linewidth}
        \centering
        \includegraphics[width=\textwidth]{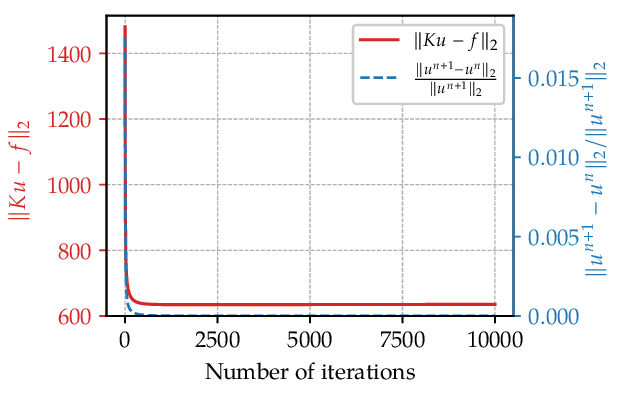} 
        \subcaption{} \label{fig:rd_err_rel_0.1_10000}
    \end{minipage}
    \begin{minipage}[b]{.3278\linewidth}
        \centering
        \includegraphics[width=\textwidth]{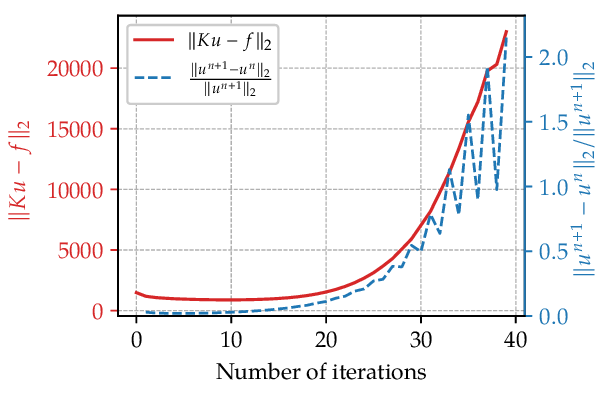} 
        \subcaption{} \label{fig:rd_err_rel_0.15_40}
    \end{minipage}
    \begin{minipage}[b]{.3278\linewidth}
        \centering
        \includegraphics[width=\textwidth]{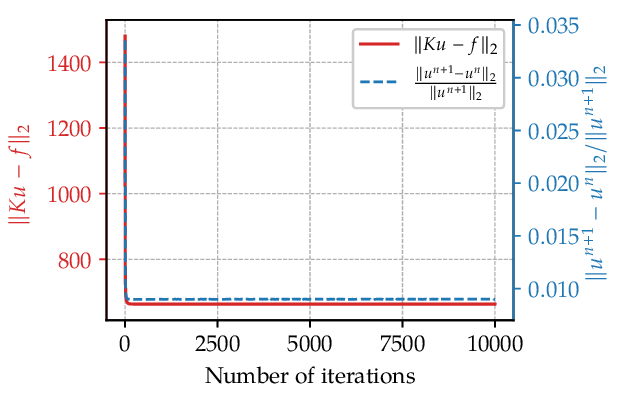} 
        \subcaption{} \label{fig:rd_err_rel_0.5_10000}
    \end{minipage}
    \caption{{Evolution of the data fidelity and the relative error 
    with respect to the iteration number for the restoration of 
    Fig. \ref{hybrid-blurred0} using the proposed model with $\lambda=15$. 
    (a) Explicit scheme \eqref{eqn:explicit-scheme-u}, $\tau=0.1$, $10 \ 000$ iterations. 
    (b) Explicit scheme \eqref{eqn:explicit-scheme-u}, $\tau=0.15$, $40$ iterations. 
    (c) Semi-implicit scheme \eqref{eqn:semi-implicit-scheme-u}, $\tau=0.5$, 
    $10 \ 000$ iterations.}} \label{fig:err-rel}
\end{figure}

{Next we discuss the stability of the numerical scheme for computing $u^n$.
We first consider the stability condition for the semi-implicit scheme
\eqref{eqn:semi-implicit-scheme-u}.}
Freezing $c_{i,j}^n$ as a constant $c$, consider the following constant 
    coefficient difference equation
    \begin{equation} \label{eqn:frozen-scheme-u}
        u_{i,j}^{n+1} = u_{i,j}^{n} 
        + \frac{\tau {c}}{h^2}\left(\delta_x^2 u_{i,j}^{n} + \delta_y^2 u_{i,j}^{n}\right)
        - \tau \lambda \left(\mathbf{K}' * \mathbf{K} * u^{n+1}\right)_{i,j},
    \end{equation}
    whose stability conditions can be obtained by von Neumann analysis. 
    {The term $\lambda(\mathbf{K}'*f)_{i,j}$ in 
    \eqref{eqn:semi-implicit-scheme-u} is omitted because, by Duhamel's principle, 
    the nonhomogeneous term does not alter the amplification 
    factor (see \cite[Section 9.3]{STRIKWERDA2004}).} The values of 
    the grid function $u^n$ at each grid point can be represented by its DFT:
    \begin{equation} \label{eqn:fourier-u}
        u_{i,j}^{n} = {\frac{1}{I^2}} \sum_{p=0}^{I-1} \sum_{q=0}^{I-1} \hat{u}^{n}(\omega_1, \omega_2) 
    e^{\mathbbm{i}(\omega_1 x_i + \omega_2 y_j)}, \quad \omega_1 = \frac{2 \pi p}{I {h}}, \ 
    \omega_2 = \frac{2 \pi q}{I{h}}.
    \end{equation}
    Noting that the DFT of $\mathbf{K}'$ always equals the complex conjugate of 
    the DFT of $\mathbf{K}$, utilizing the convolution property of the DFT, we have
    \begin{equation} \label{eqn:fourier-convolution}
        \left(\mathbf{K}' * \mathbf{K} * u^{n+1}\right)_{i,j} 
    = {\frac{1}{I^2}} \sum_{p=0}^{I-1} \sum_{q=0}^{I-1} \hat{u}^{n+1}(\omega_1, \omega_2) 
    |\hat{\mathbf{K}}(\omega_1, \omega_2)|^2 
    e^{\mathbbm{i}(\omega_1 x_i + \omega_2 y_j)}.
    \end{equation}
    Substituting \eqref{eqn:fourier-u} and \eqref{eqn:fourier-convolution} 
    into \eqref{eqn:frozen-scheme-u} and comparing coefficients, we obtain that
    \begin{align} \label{eqn:fourier-frozen-scheme-u}
        \hat{u}^{n+1}(\omega_1, \omega_2) =& \left[1-\frac{4 \tau c}{h^2} 
        \left(\sin^2\frac{\omega_1 h}{2}+\sin^2\frac{\omega_2 h}{2}\right)\right]
        \hat{u}^{n}(\omega_1, \omega_2) \notag \\
        &-\tau \lambda |\hat{\mathbf{K}}(\omega_1, \omega_2)|^2 
        \hat{u}^{n+1}(\omega_1, \omega_2).
    \end{align}
    Then the amplification factor can be derived by simple calculation:
    \[G(\omega_1,\omega_2) = \frac{\hat{u}^{n+1}(\omega_1, \omega_2)}
    {\hat{u}^{n}(\omega_1, \omega_2)} = \frac{1-\frac{4c \tau}{h^2} 
    \left(\sin^2\frac{\omega_1 h}{2}+\sin^2\frac{\omega_2 h}{2}\right)}
    {1+\tau \lambda |\hat{\mathbf{K}}(\omega_1, \omega_2)|^2}.\]
    {The strict stability of the frozen-coefficient scheme \eqref{eqn:frozen-scheme-u} in 
    the $\ell^2$ norm requires} the strict von Neumann 
    condition $|G(\omega_1,\omega_2)| \leq 1$ {for all $\omega_1,\omega_2$. 
    $G(\omega_1,\omega_2) \leq 1$ holds automatically, whereas 
    ensuring $G(\omega_1,\omega_2)\geq -1$ requires 
    \[\frac{4 \tau c}{h^2} \left(\sin^2\frac{\omega_1 h}{2}+\sin^2\frac{\omega_2 h}{2}\right)
    \leq 2 + \tau \lambda |\hat{\mathbf{K}}(\omega_1, \omega_2)|^2 \]
    for all $\omega_1,\omega_2$. 
    In particular this must hold at the Nyquist frequency 
    point $(\omega_1^*,\omega_2^*)=(\frac{\pi}{h},\frac{\pi}{h})$, so we require
    $\frac{8 \tau c}{h^2} \leq 2 + \tau \lambda K_{\mathrm{m}}$, 
    where $K_{\mathrm{m}} \coloneqq |\hat{\mathbf{K}}(\omega_1^*,\omega_2^*)|^2$.}
    Therefore, 
    \[{\frac{8 \tau}{h^2} \max_{n,i,j} c\left(|\nabla^{\alpha}_{h} u^n|, v^{n+1}\right) \leq 2 + \tau \lambda K_{\mathrm{m}}}\]
    becomes the necessary condition to ensure that the 
    scheme \eqref{eqn:semi-implicit-scheme-u} is strictly stable 
    {in the $\ell^2$ norm.}

{On the other hand, by a similar procedure one obtains} for the explicit scheme
\begin{equation} \label{eqn:explicit-scheme-u}
    \frac{u_{i,j}^{n+1}-u_{i,j}^n}{\tau} = {D_x^-}
    \left(c_{i,j}^n {D_x^+} u_{i,j}^{n}\right) + {D_y^-}
    \left(c_{i,j}^n {D_y^+} u_{i,j}^n\right) 
    - \lambda \left(\mathbf{K}' * (\mathbf{K} * u^{n} - f)\right)_{i,j}
\end{equation} 
{that the frozen-coefficient scheme has the amplification factor}
\[{\widetilde{G}(\omega_1,\omega_2) = 1-\frac{4c \tau}{h^2} 
\left(\sin^2\frac{\omega_1 h}{2}+\sin^2\frac{\omega_2 h}{2}\right) 
-\tau \lambda |\hat{\mathbf{K}}(\omega_1, \omega_2)|^2.}\]
{Requiring $|\widetilde{G}(\omega_1,\omega_2)|\leq 1$ yields}
\[{\frac{4 \tau c}{h^2} \left(\sin^2\frac{\omega_1 h}{2}+\sin^2\frac{\omega_2 h}{2}\right) 
+ \tau \lambda |\hat{\mathbf{K}}(\omega_1, \omega_2)|^2 \leq 2}\] 
{for all $\omega_1,\omega_2$. Besides the restriction at the Nyquist 
frequency point}
\[{\frac{8 \tau}{h^2} \max_{n,i,j} c\left(|\nabla^{\alpha}_{h} u^n|, v^{n+1}\right) 
+ \tau \lambda K_{\mathrm{m}} \leq 2,}\]
{strict stability of the explicit scheme in the $\ell^2$ norm also 
requires the constraint at the zero-frequency point $(\omega_1,\omega_2)=(0,0)$,}
\begin{equation} \label{eqn:explicit-scheme-u-condition-2}
{\tau\lambda\bigl|\widehat{\mathbf{K}}(0,0)\bigr|^2 \le 2,}
\end{equation}
which imposes stricter restriction on {the choice of} the time step size. 
{It is well known that $\widehat{\mathbf{K}}(0,0)=\sum_{i,j} k_{i,j}$, 
and the discrete convolution kernel is usually assumed 
to satisfy $\sum_{i,j} k_{i,j}=1$ (see \cite{CHAN2005,AUBERT1997}), 
so \eqref{eqn:explicit-scheme-u-condition-2} reduces to $\tau \le \frac{2}{\lambda}$. 
For example, when restoring Fig.~\ref{hybrid-blurred0} with $\lambda=15$, 
choosing $\tau=0.1$ and using the explicit scheme \eqref{eqn:explicit-scheme-u} yields 
stable behavior after $10 \ 000$ iterations, as shown in Fig. \ref{fig:rd_err_rel_0.1_10000}, 
and produces a visually convincing restoration (see Fig. \ref{fig:hybrid_0.1_10000}). However, 
when $\tau$ is increased to $0.15 > \frac{2}{\lambda} = \frac{2}{15} \approx 0.133$, 
the iteration becomes clearly unstable shortly after it starts, 
as in Fig. \ref{fig:hybrid_0.15_40}, and the restoration is 
rendered meaningless (see Fig. \ref{fig:rd_err_rel_0.15_40}). By contrast, using 
the semi-implicit scheme \eqref{eqn:semi-implicit-scheme-u} with $\tau=0.5$, the iteration 
remains stable after $10 \ 000$ iterations (Fig. \ref{fig:rd_err_rel_0.1_10000}) and yields 
a good visual restoration result (Fig. \ref{fig:hybrid_0.5_10000}).}

\begin{figure}[htbp] 
    \centering
    \begin{minipage}[b]{.2\linewidth}
        \centering
        \includegraphics[width=\textwidth]{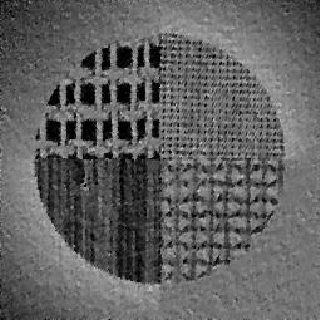} 
        \subcaption{} \label{fig:hybrid_0.1_10000}
    \end{minipage}
    \begin{minipage}[b]{.2\linewidth}
        \centering
        \includegraphics[width=\textwidth]{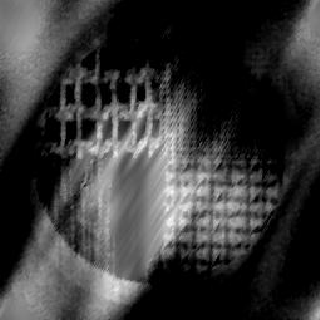} 
        \subcaption{} \label{fig:hybrid_0.15_40}
    \end{minipage}
    \begin{minipage}[b]{.2\linewidth}
        \centering
        \includegraphics[width=\textwidth]{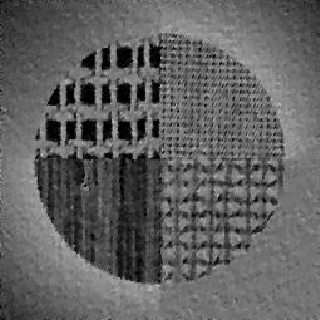} 
        \subcaption{} \label{fig:hybrid_0.5_10000}
    \end{minipage}
    \caption{{Recovery results of 
    Fig. \ref{hybrid-blurred0} using the proposed model with $\lambda=15$. 
    (a) Explicit scheme \eqref{eqn:explicit-scheme-u}, $\tau=0.1$, $10 \ 000$ iterations. 
    (b) Explicit scheme \eqref{eqn:explicit-scheme-u}, $\tau=0.15$, $40$ iterations. 
    (c) Semi-implicit scheme \eqref{eqn:semi-implicit-scheme-u}, $\tau=0.5$, 
    $10 \ 000$ iterations.}} \label{fig:hybrid_ie}
\end{figure}

{In deblurring tasks, for relatively small noise levels $\sigma$ one typically chooses 
a larger $\lambda$ to improve restoration fidelity; for example, when $\sigma=3$ we empirically 
found that $\lambda$ on the order of $10^1$ gives the best visual 
results (see the experiments later). 
When the explicit scheme is employed, the time step $\tau$ is constrained by 
the condition $\tau \le \frac{2}{\lambda}$; for practical choices of $\lambda$, 
this yields an upper bound on $\tau$ that lies roughly between $0.02$ and $0.2$},
resulting in a considerable number of time steps for the deblurring process. 
{The semi-implicit scheme, by contrast, does not incur this additional restriction.} 
To mitigate computational cost, we {therefore} adopt the semi-implicit 
scheme \eqref{eqn:semi-implicit-scheme-u} in the following experiments.

\begin{remark}  
    {$ $}
    \begin{enumerate}[(i)] 
        {\item The strict stability in the $\ell^2$ norm demands 
        that $\|u^{n+1}\|_2 \leq \|u^0\|_2 = \|f\|_2$. Since $u$ generally does not 
        satisfy a maximum principle, this condition is somewhat stringent. One may be 
        interested in employing the generalized von Neumann condition 
        $|G(\omega_1,\omega_2)| \leq 1 + O(\tau)$ to analyze the 
        explicit and semi-implicit schemes in order to obtain sharper stability conditions, 
        but this becomes more involved and requires further investigation. Moreover, as 
        shown in Fig. \ref{fig:rd_err_rel_0.15_40} and Fig. \ref{fig:hybrid_0.15_40}, 
        at the very least, the growth induced by explicitly treating the reaction 
        term is unacceptable.
        \item The conditionally stable semi-implicit 
        scheme \eqref{eqn:semi-implicit-scheme-u} is sufficient for the numerical 
        experiments presented in this paper. However, for images of higher resolution, 
        using larger time steps to reduce computational cost becomes appealing. 
        One possible improvement is to treat the diffusion term implicitly, 
        which leads to the following scheme:
        \begin{align} \label{eqn:implicit-scheme-u} 
            \frac{u_{i,j}^{n+1}-u_{i,j}^n}{\tau} 
            = \, & D_x^- \left(c_{i,j}^n D_x^+ u_{i,j}^{n+1}\right) 
            + D_y^- \left(c_{i,j}^n D_y^+ u_{i,j}^{n+1}\right) \notag \\
            &- \lambda \left(\mathbf{K}' * \mathbf{K} * u^{n+1}\right)_{i,j} 
        + \lambda \left(\mathbf{K}' * f\right)_{i,j},
        \end{align}
        Because of the variable coefficients, this scheme can no longer be efficiently 
        solved using the DFT. Some operator-splitting methods may be required, and the 
        presence of an implicitly treated reaction term makes the corresponding algorithmic 
        design calls for more in-depth study. 
        In addition, acceleration techniques such as Fast Explicit Diffusion \cite{WEICKERT2016} could 
        also be explored to speed up computations based 
        on \eqref{eqn:semi-implicit-scheme-u}, which would be an interesting topic 
        for future work.}
\end{enumerate} 
\end{remark}

{We proceed to conduct a series of comparative experiments in which}
three common types of blur kernels are considered: the motion blur 
kernel with angle $\frac{\pi}{3}$ and length $20$, disk kernel with a radius of $3$
and $5 \times 5$ average kernel.
We test several noisy blurred images, distorted by {additive isotropic} Gaussian noise with a standard 
deviation $\sigma = 3$ and by blurring with one of the previously mentioned kernels, to 
verify the effectiveness of the proposed model. For illustration, the result for 
the texture1 ($256 \times 256$ pixels), texture2 ($256 \times 256$ pixels), 
hybrid ($256 \times 256$ pixels), satellite1($512 \times 512$ pixels), 
satellite2($512 \times 512$ pixels) and satellite3($512 \times 512$ pixels) are presented, 
see the original test images in Fig. \ref{original}. 

\begin{figure}[htbp] 
    \centering
    \begin{minipage}[b]{.2\linewidth}
        \centering
        \includegraphics[width=\textwidth]{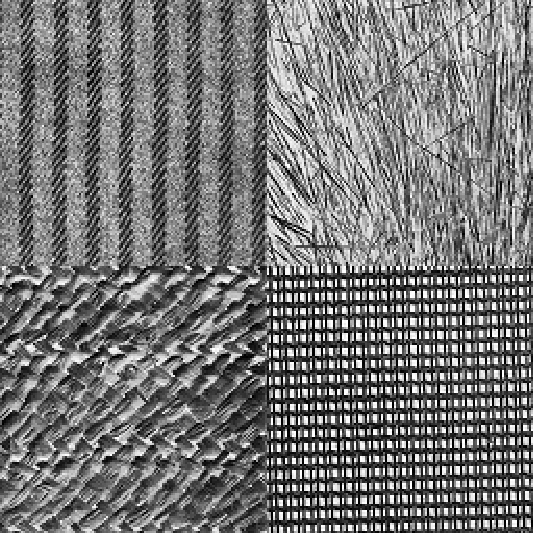}
        \subcaption{Texture1}
    \end{minipage} %\par
    \begin{minipage}[b]{.2\linewidth}
        \centering
        \includegraphics[width=\textwidth]{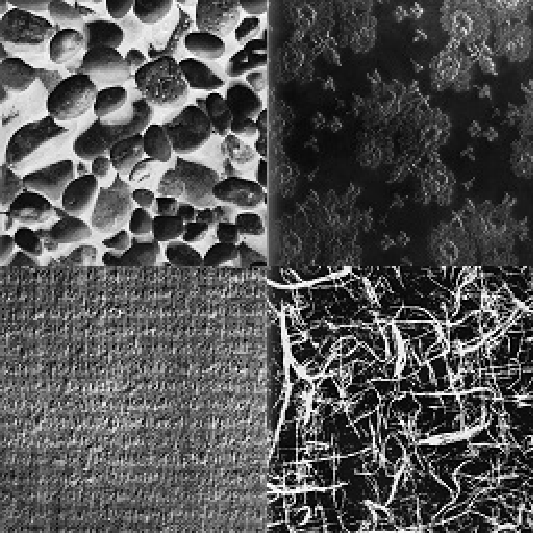}
        \subcaption{Texture2}
    \end{minipage}
    \begin{minipage}[b]{.2\linewidth}
        \centering
        \includegraphics[width=\textwidth]{hybrid.eps}
        \subcaption{Hybrid}
    \end{minipage}

    \begin{minipage}[b]{.2\linewidth}
        \centering
        \includegraphics[width=\textwidth]{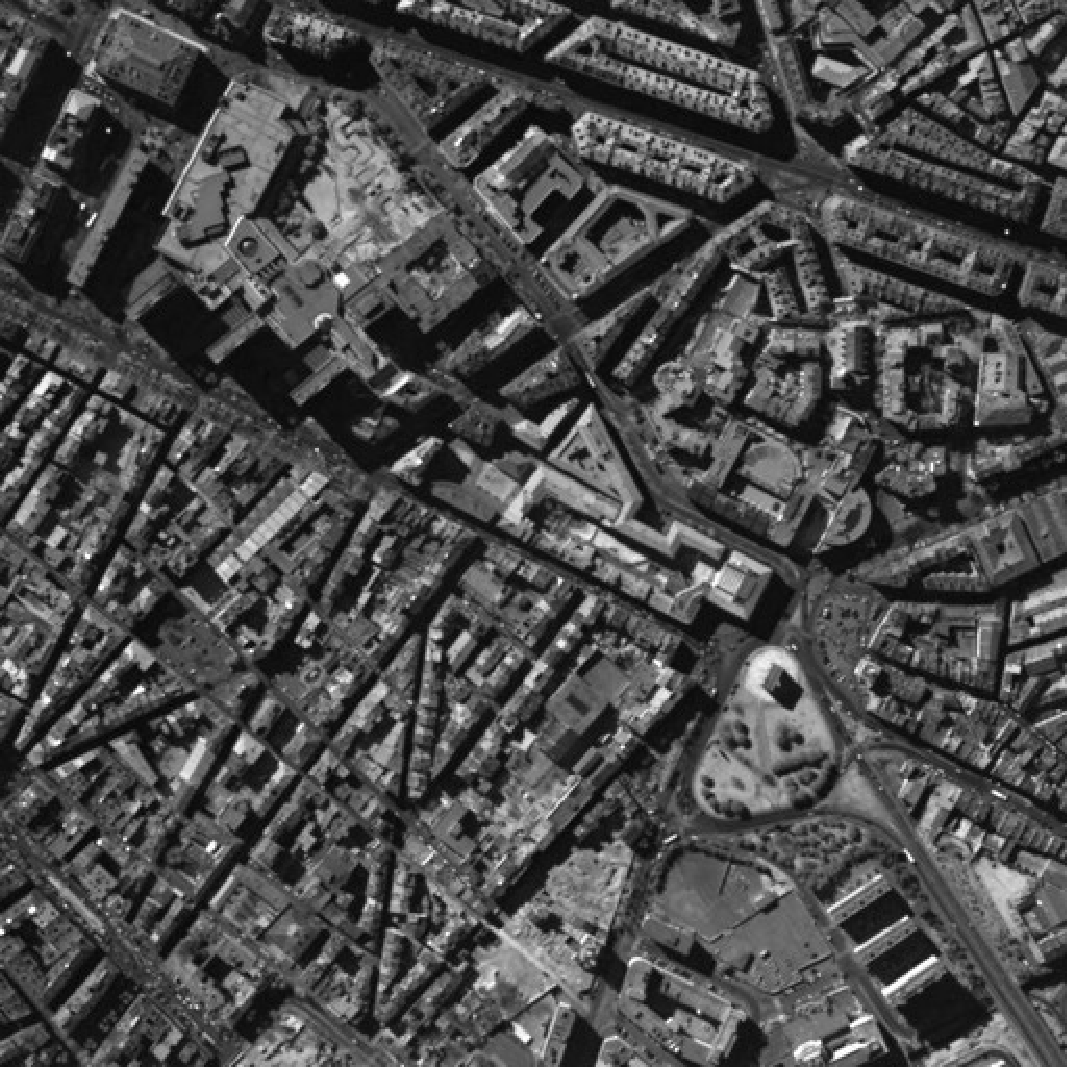}
        \subcaption{Satellite1}
    \end{minipage}
    \begin{minipage}[b]{.2\linewidth}
        \centering
        \includegraphics[width=\textwidth]{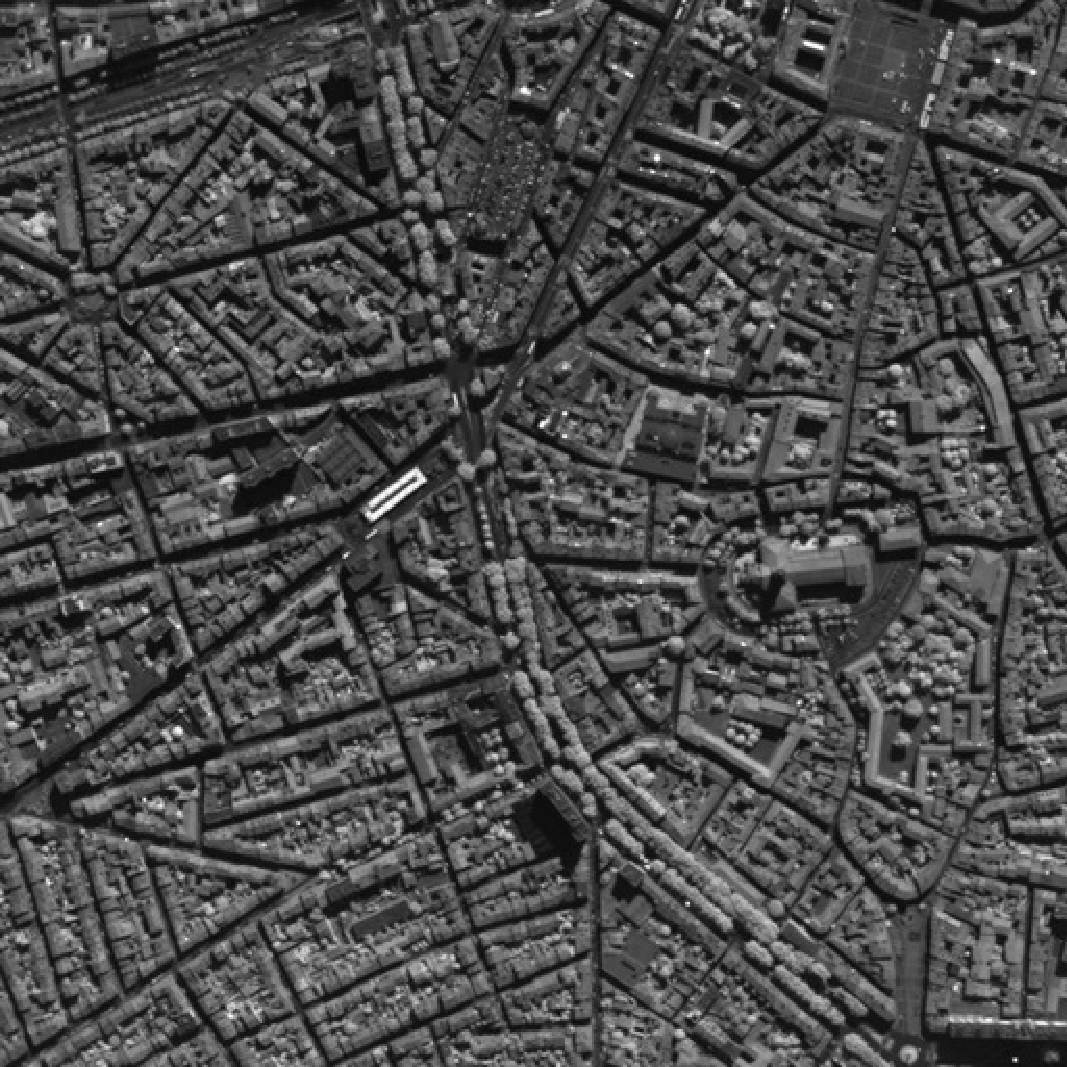}
        \subcaption{Satellite2}
    \end{minipage}
    \begin{minipage}[b]{.2\linewidth}
        \centering
        \includegraphics[width=\textwidth]{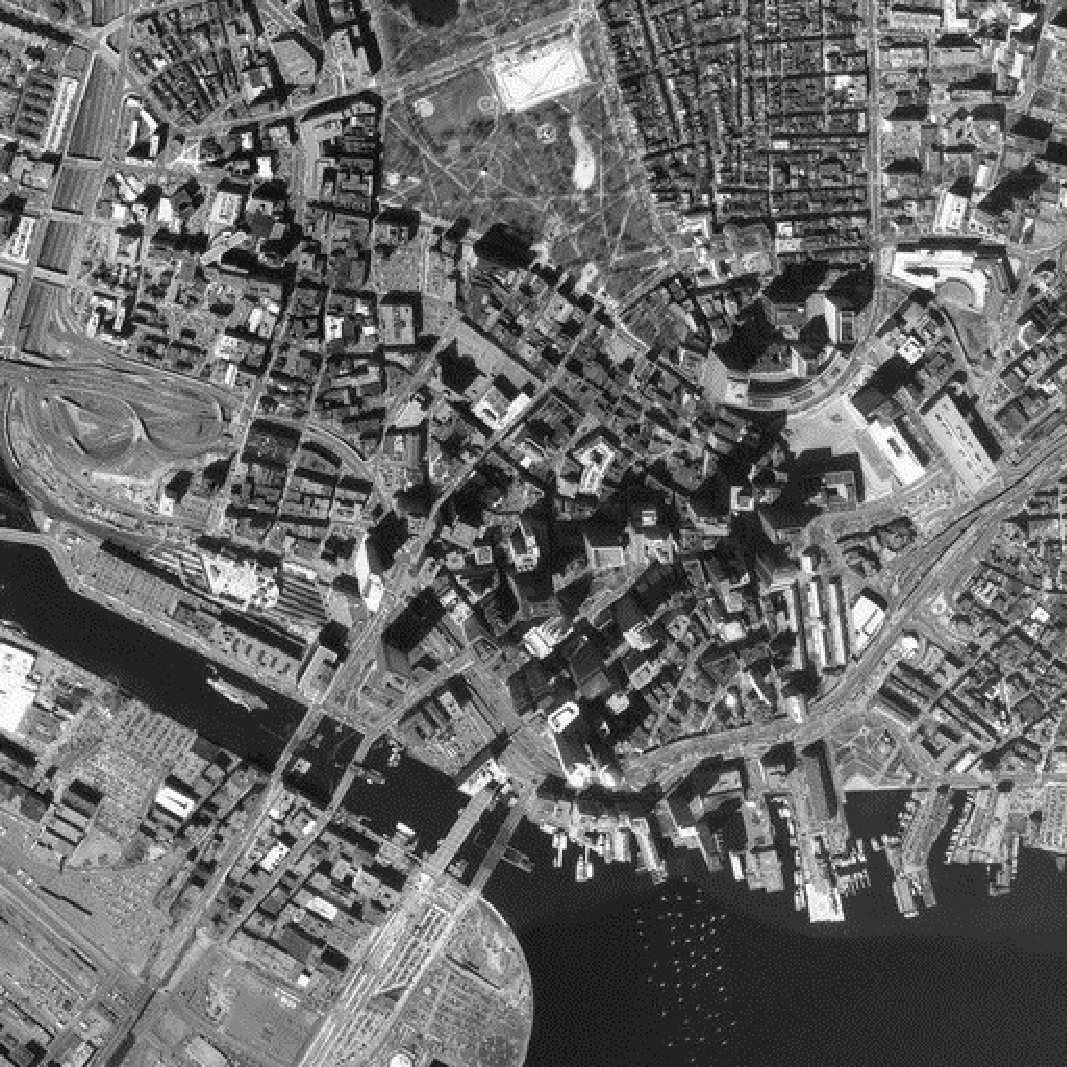}
        \subcaption{Satellite3}
    \end{minipage}
    \caption{Test images.} \label{original}
\end{figure}

The proposed model is compared with 
fast total variation (FastTV) minimization \cite{HUANG2008}, nonlocal total 
variation (NLTV) method \cite{GILBOA2009, ZHANG2010},
nonlocal adaptive biharmonic (NLABH) regularization \cite{WEN2023}, 
a nonlinear fractional diffusion (NFD) model \cite{GUO2019} and 
a Perona-Malik type equation regularized by the $p$-Laplacian (PLRPM) \cite{GUIDOTTI2013}. 
The free parameters for FastTV, NLTV, NLABH, NFD and PLRPM are set as suggested in the reference papers.
For the proposed model, while there may be more suitable parameter choices for each image, 
in {the following experiments}, we set $\tau=0.5$, $\alpha = 0.9$, $\gamma = 1$, $\mu = 0.4$, 
$k_1 = 1$, $\lambda_1 = 0.9$, $maxIter = 500$, and $tol = 0.005$ 
throughout all experiments. For the texture1, texture2, hybrid, satellite 1, 
satellite 2 and satellite 3 images, we select $\beta=0.96, 0.63, 1 , 0.7, 0.9$ 
and $0.8$ respectively. The parameter $\lambda$ is chosen based on the 
specific image and the type of blur kernel. For a comparison of the performance quantitatively, 
we list the PSNR and SSIM values of the restored results in 
Table \ref{results-1} and Table \ref{results-2}.

\begin{table}[htbp] 
    \centering
    \caption{Comparison of PSNR and SSIM for different models in the experiments on 
    texture1, texture2 and hybrid image. Bold values indicate the best result.}
    \label{results-1}
    \begin{tabular}{c*{7}c}
        \toprule
        \multirow{1}*{} 
        & \multicolumn{2}{c}{Figure \ref{texture1}} 
        & \multicolumn{2}{c}{Figure \ref{texture2}} 
        & \multicolumn{2}{c}{Figure \ref{hybrid}} \\
        \cmidrule{2-7} & PSNR  & SSIM  & PSNR  & SSIM & PSNR  & SSIM \\
        \midrule
         
         {Degraded} & {12.77} & {0.1901}          
                  & {15.60} & {0.3640}
                  & {18.07} & {0.4382} \\
         FastTV   & 15.17          & 0.5320          & 19.00          & 0.6887
                  & 21.00          & \textbf{0.7136} \\
         NLTV     & \textbf{16.35} & 0.6599          & 19.26          & \textbf{0.7427}
                  & 21.14          & 0.7087          \\
         NLABH    & 16.11          & 0.6698          & 18.89          & 0.6710
                  & 20.21          & 0.4598          \\
         NFD      & 16.18          & 0.6738          & 19.06          & 0.6763
                  & 20.36          & 0.4723          \\
         PLRPM    & 16.30          & 0.6599          & 19.29          & 0.7068
                  & \textbf{21.17} & 0.6415          \\
         Ours     & 16.25          & \textbf{0.6745} & \textbf{19.39} & 0.6992
                  & \textbf{21.17} & 0.6281          \\
        \bottomrule
    \end{tabular}
\end{table}

\begin{table}[htbp] 
    \centering
    \caption{Comparison of PSNR and SSIM for different models in the experiments on 
    three satellite images. Bold values indicate the best result.}
    \label{results-2}
    \begin{tabular}{c*{7}c}
        \toprule
        \multirow{1}*{} 
        & \multicolumn{2}{c}{Figure \ref{satellite1}} 
        & \multicolumn{2}{c}{Figure \ref{satellite2}} 
        & \multicolumn{2}{c}{Figure \ref{satellite3}} \\
        \cmidrule{2-7} & PSNR  & SSIM  & PSNR  & SSIM & PSNR  & SSIM \\
        \midrule
        {Degraded} & {18.64} & {0.5171}          
                  & {21.68} & {0.7761}
                  & {19.20} & {0.7680} \\
         FastTV  & 23.97          & 0.8888          & 24.91          & 0.9322          
                 & 22.41          & 0.9229          \\
         NLTV    & 23.87          & 0.8838          & 25.46          & 0.9387
                 & \textbf{22.59} & \textbf{0.9291} \\
         NLABH   & 22.56          & 0.8475          & 24.82          & 0.9325
                 & 22.00          & 0.9049          \\
         NFD     & 22.67          & 0.8548          & 24.77          & 0.9301
                 & 22.06          & 0.9059          \\
         PLRPM   & 24.04          & 0.8949          & 25.28          & 0.9380
                 & 22.55          & 0.9205          \\
         Ours    & \textbf{24.44} & \textbf{0.9029} & \textbf{25.47} & \textbf{0.9395}
                 & 22.55          & 0.9212          \\
        \bottomrule
    \end{tabular}
\end{table}

\begin{figure}[!htbp]
    \centering
    \begin{minipage}[t]{.2\linewidth}
        \centering
        \includegraphics[width=\textwidth]{texture1.eps}
        \subcaption{Original} \label{texture1-original}
    \end{minipage} 
    \begin{minipage}[t]{.2\linewidth}
        \centering
        \includegraphics[width=\textwidth]{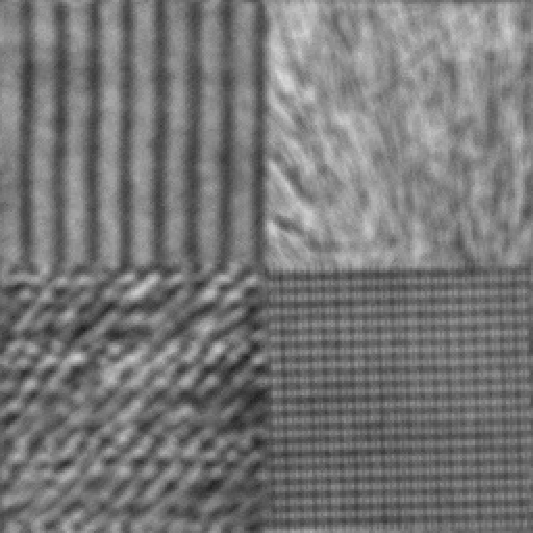}
        \subcaption{Blurred} \label{texture1-blurred}
    \end{minipage}
    \begin{minipage}[t]{.2\linewidth}
        \centering
        \includegraphics[width=\textwidth]{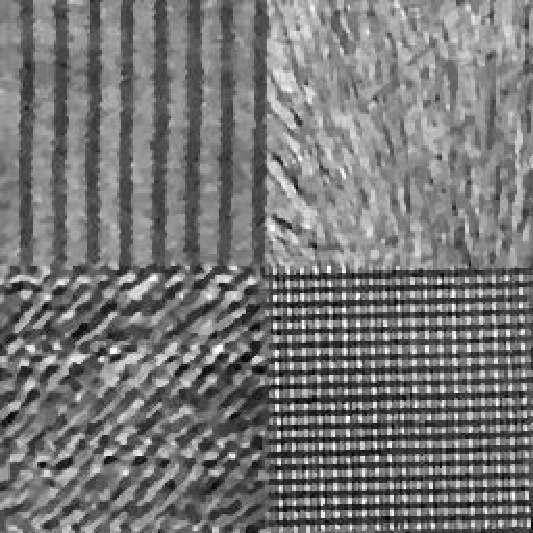}
        \subcaption{FastTV} \label{texture1-FastTV}
    \end{minipage}
    \begin{minipage}[t]{.2\linewidth}
        \centering
        \includegraphics[width=\textwidth]{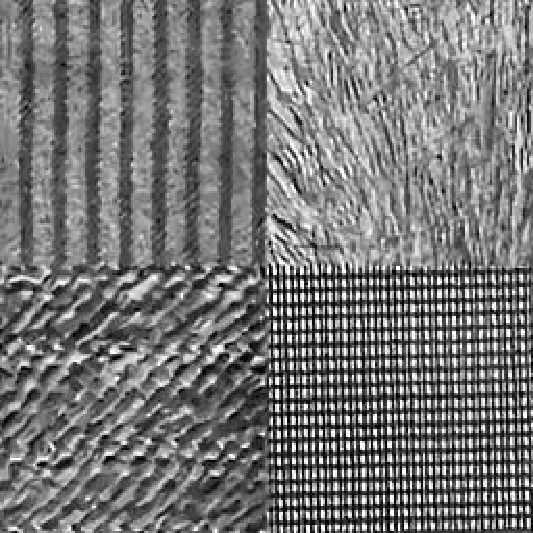}
        \subcaption{NLTV} \label{texture1-NLTV}
    \end{minipage}
    
    \begin{minipage}[b]{.2\linewidth}
        \centering
        \includegraphics[width=\textwidth]{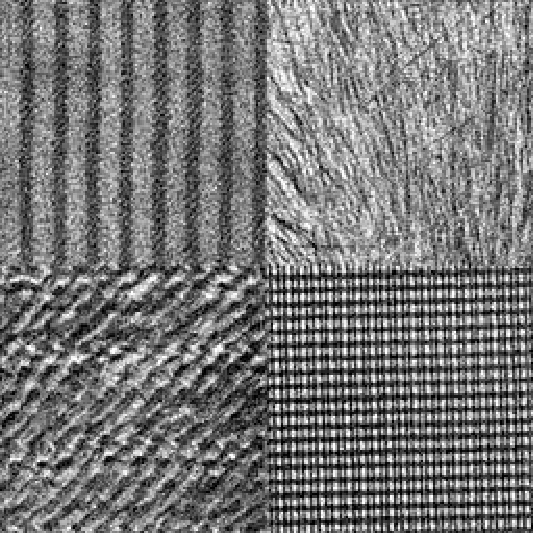}
        \subcaption{NLABH} \label{texture1-NLABH}
    \end{minipage}
    \begin{minipage}[b]{.2\linewidth}
        \centering
        \includegraphics[width=\textwidth]{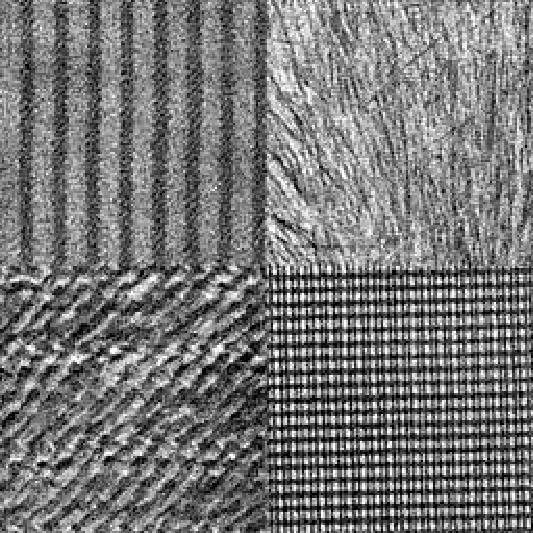}
        \subcaption{NFD} \label{texture1-NFD}
    \end{minipage}
    \begin{minipage}[b]{.2\linewidth}
        \centering
        \includegraphics[width=\textwidth]{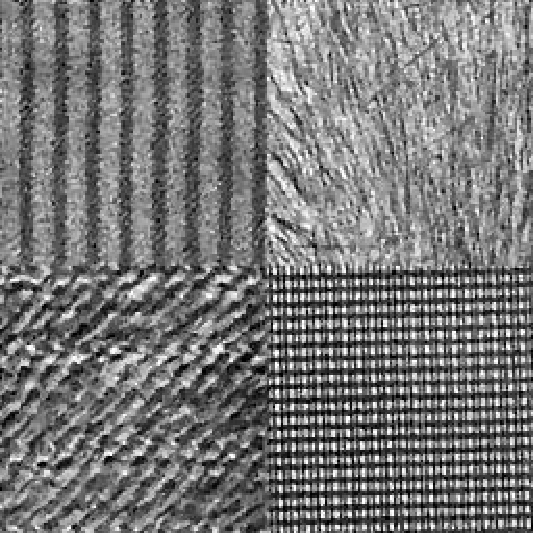}
        \subcaption{PLRPM} \label{texture1-PLRPM}
    \end{minipage}
    \begin{minipage}[b]{.2\linewidth}
        \centering
        \includegraphics[width=\textwidth]{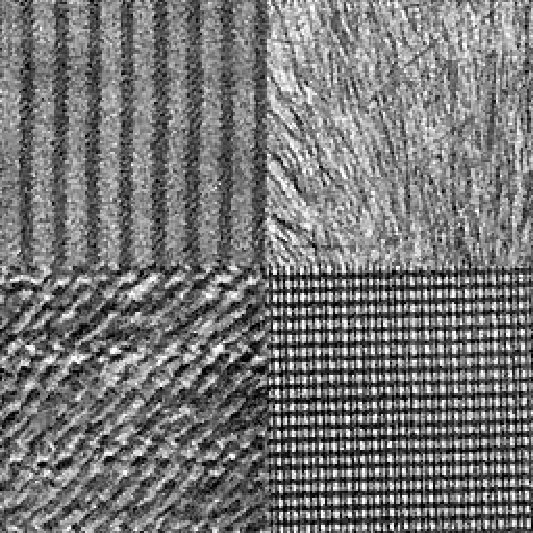}
        \subcaption{Ours} \label{texture1-Ours}
    \end{minipage}
    \caption{Recovery results for the texture1 image with disk blur and corrupted by 
    the noise of standard deviation $\sigma = 3$. (a) original image. 
    (b) noisy blurred image, PSNR=$12.77$. (c)-(h) recovered images.} \label{texture1}
\end{figure}

\begin{figure}[!htbp]
    \centering
    \begin{minipage}[t]{.2\linewidth}
        \centering
        \includegraphics[width=\textwidth]{texture2.eps}
        \subcaption{Original} \label{texture2-original}
    \end{minipage} 
    \begin{minipage}[t]{.2\linewidth}
        \centering
        \includegraphics[width=\textwidth]{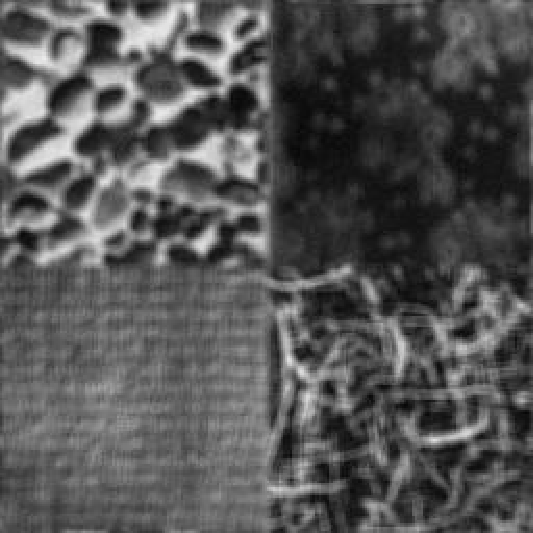}
        \subcaption{Blurred} \label{texture2-blurred}
    \end{minipage}
    \begin{minipage}[t]{.2\linewidth}
        \centering
        \includegraphics[width=\textwidth]{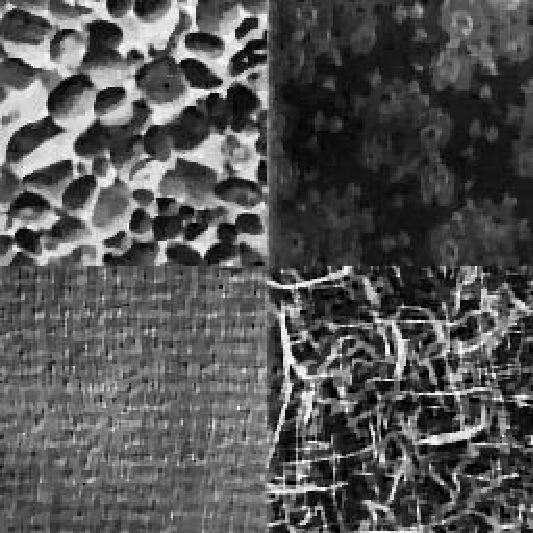}
        \subcaption{FastTV} \label{texture2-FastTV}
    \end{minipage}
    \begin{minipage}[t]{.2\linewidth}
        \centering
        \includegraphics[width=\textwidth]{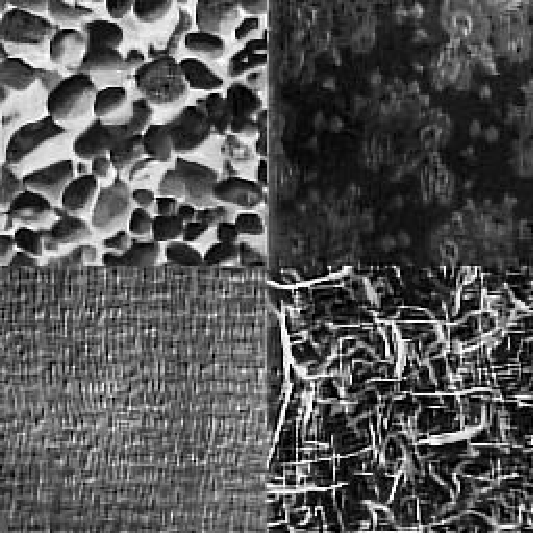}
        \subcaption{NLTV} \label{texture2-NLTV}
    \end{minipage}
    
    \begin{minipage}[b]{.2\linewidth}
        \centering
        \includegraphics[width=\textwidth]{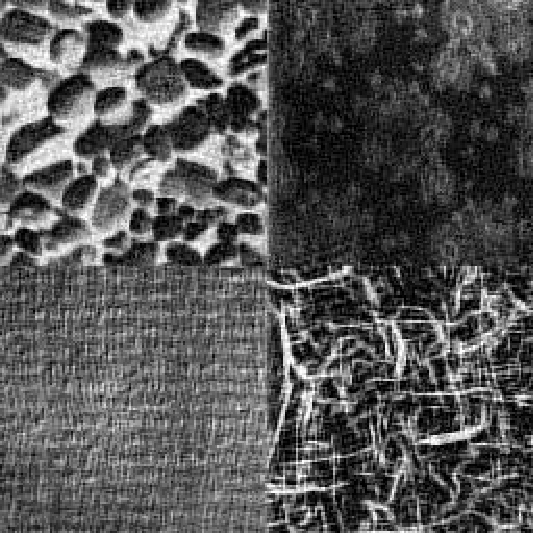}
        \subcaption{NLABH} \label{texture2-NLABH}
    \end{minipage}
    \begin{minipage}[b]{.2\linewidth}
        \centering
        \includegraphics[width=\textwidth]{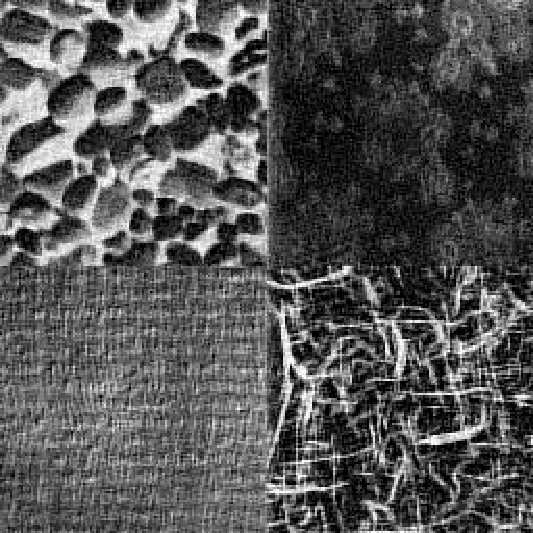}
        \subcaption{NFD} \label{texture2-NFD}
    \end{minipage}
    \begin{minipage}[b]{.2\linewidth}
        \centering
        \includegraphics[width=\textwidth]{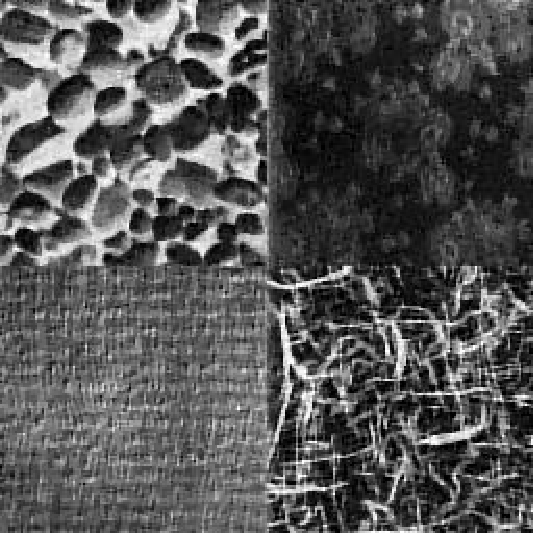}
        \subcaption{PLRPM} \label{texture2-PLRPM}
    \end{minipage}
    \begin{minipage}[b]{.2\linewidth}
        \centering
        \includegraphics[width=\textwidth]{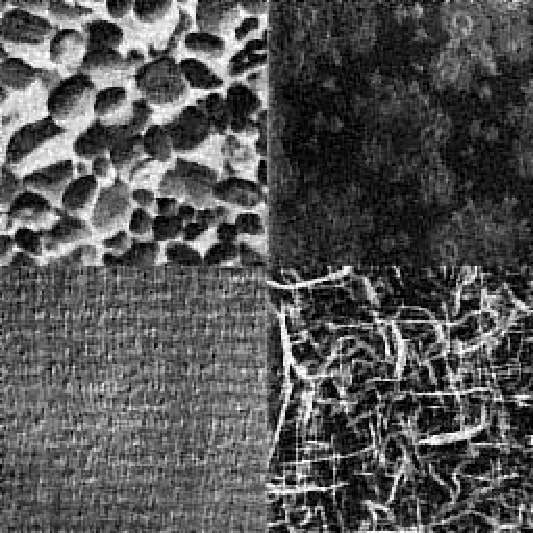}
        \subcaption{Ours} \label{texture2-Ours}
    \end{minipage}
    \caption{Recovery results for the texture2 image with average blur and corrupted by 
    the noise of standard deviation $\sigma = 3$. (a) original image. 
    (b) noisy blurred image, PSNR=$15.60$. (c)-(h) recovered images.} \label{texture2}
\end{figure}

\begin{figure}[!htbp]
    \centering
    \begin{minipage}[t]{.2\linewidth}
        \centering
        \includegraphics[width=\textwidth]{hybrid.eps}
        \subcaption{Original} \label{hybrid-original}
    \end{minipage} 
    \begin{minipage}[t]{.2\linewidth}
        \centering
        \includegraphics[width=\textwidth]{hybridb.eps}
        \subcaption{Blurred} \label{hybrid-blurred}
    \end{minipage}
    \begin{minipage}[t]{.2\linewidth}
        \centering
        \includegraphics[width=\textwidth]{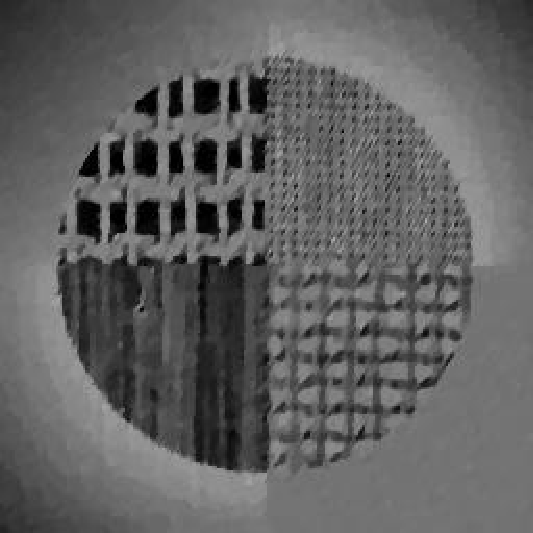}
        \subcaption{FastTV} \label{hybrid-FastTV}
    \end{minipage}
    \begin{minipage}[t]{.2\linewidth}
        \centering
        \includegraphics[width=\textwidth]{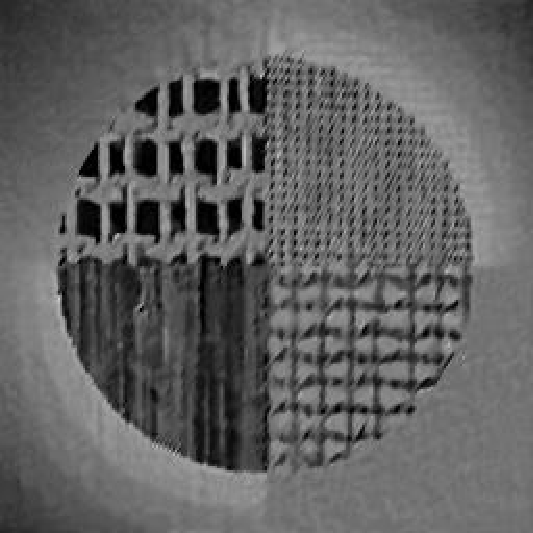}
        \subcaption{NLTV} \label{hybrid-NLTV}
    \end{minipage}
    
    \begin{minipage}[b]{.2\linewidth}
        \centering
        \includegraphics[width=\textwidth]{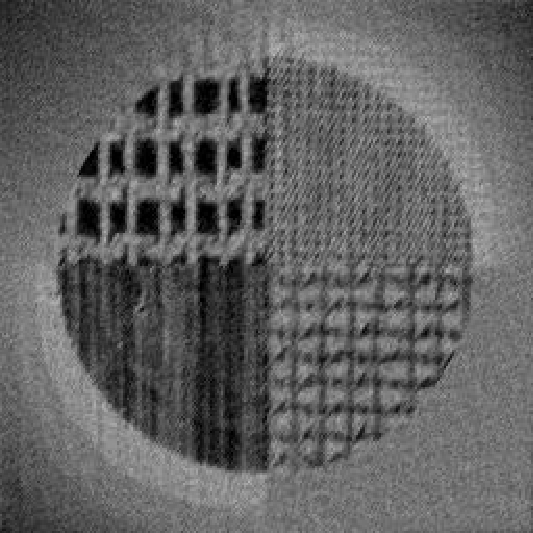}
        \subcaption{NLABH} \label{hybrid-NLABH}
    \end{minipage}
    \begin{minipage}[b]{.2\linewidth}
        \centering
        \includegraphics[width=\textwidth]{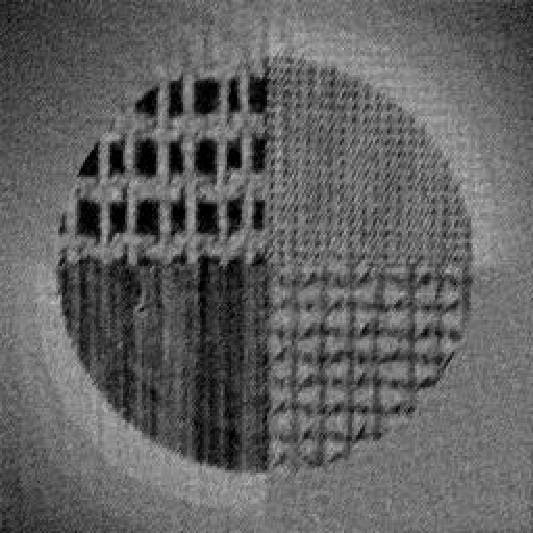}
        \subcaption{NFD} \label{hybrid-NFD}
    \end{minipage}
    \begin{minipage}[b]{.2\linewidth}
        \centering
        \includegraphics[width=\textwidth]{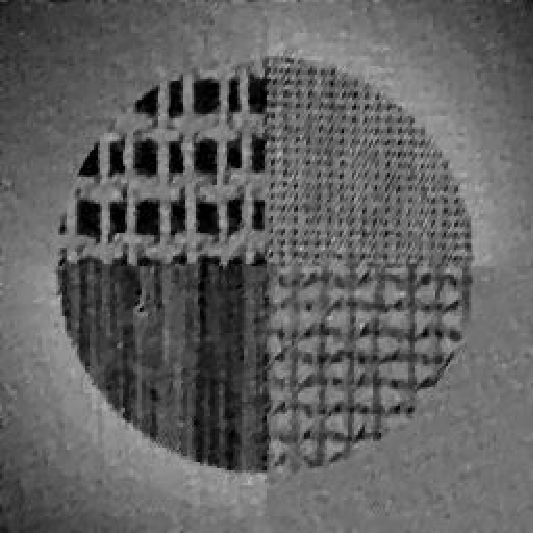}
        \subcaption{PLRPM} \label{hybrid-PLRPM}
    \end{minipage}
    \begin{minipage}[b]{.2\linewidth}
        \centering
        \includegraphics[width=\textwidth]{hybridh.eps}
        \subcaption{Ours} \label{hybrid-Ours}
    \end{minipage}
    \caption{Recovery results for the hybrid image with motion blur and corrupted by 
    the noise of standard deviation $\sigma = 3$. (a) original image. 
    (b) noisy blurred image, PSNR=$18.07$. (c)-(h) recovered images.} \label{hybrid}
\end{figure}

\begin{figure}[!htbp]
    \centering
    \begin{minipage}[t]{.2\linewidth}
        \centering
        \includegraphics[width=\textwidth]{satellite1.eps}
        \subcaption{Original} \label{satellite1-original}
    \end{minipage} 
    \begin{minipage}[t]{.2\linewidth}
        \centering
        \includegraphics[width=\textwidth]{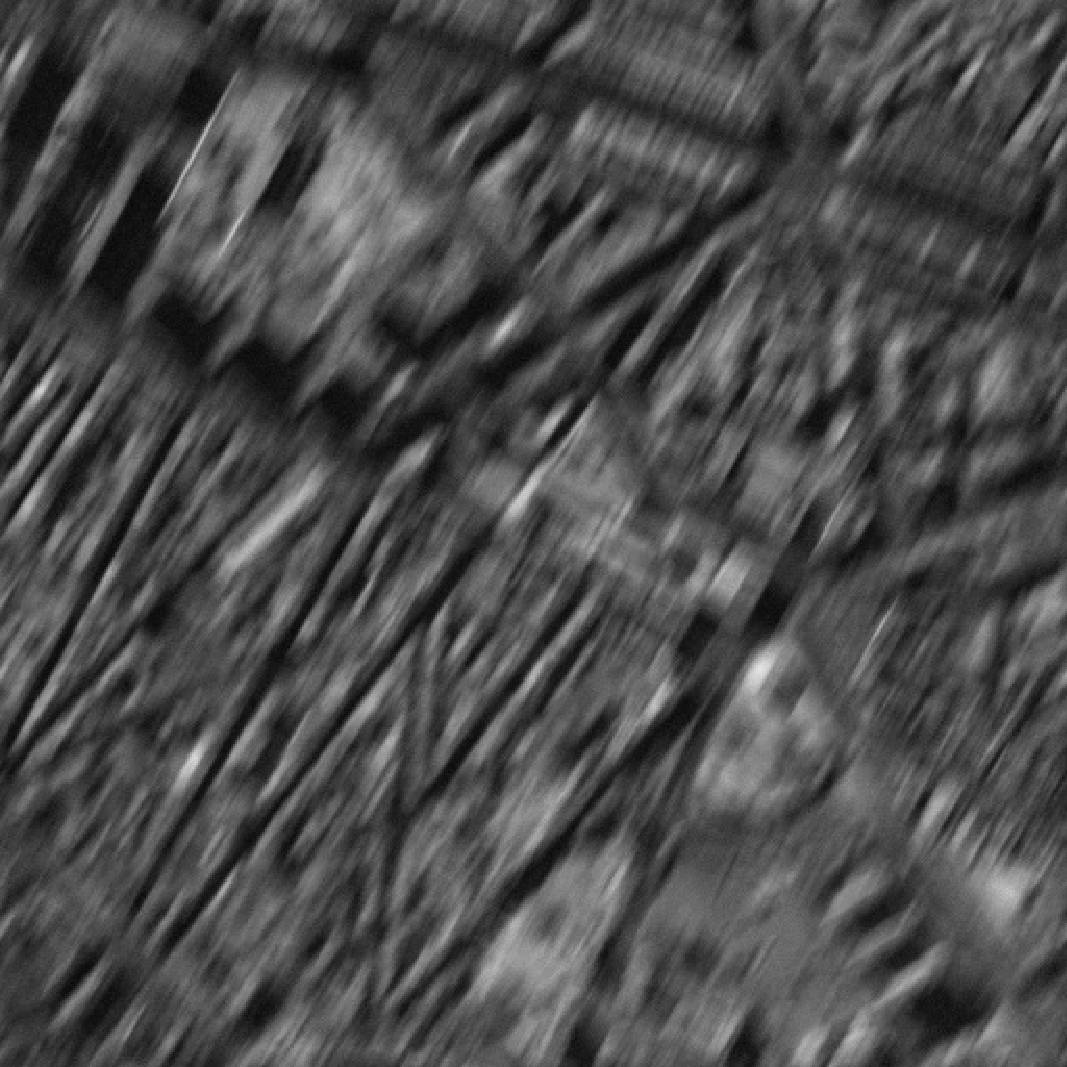}
        \subcaption{Blurred} \label{satellite1-blurred}
    \end{minipage}
    \begin{minipage}[t]{.2\linewidth}
        \centering
        \includegraphics[width=\textwidth]{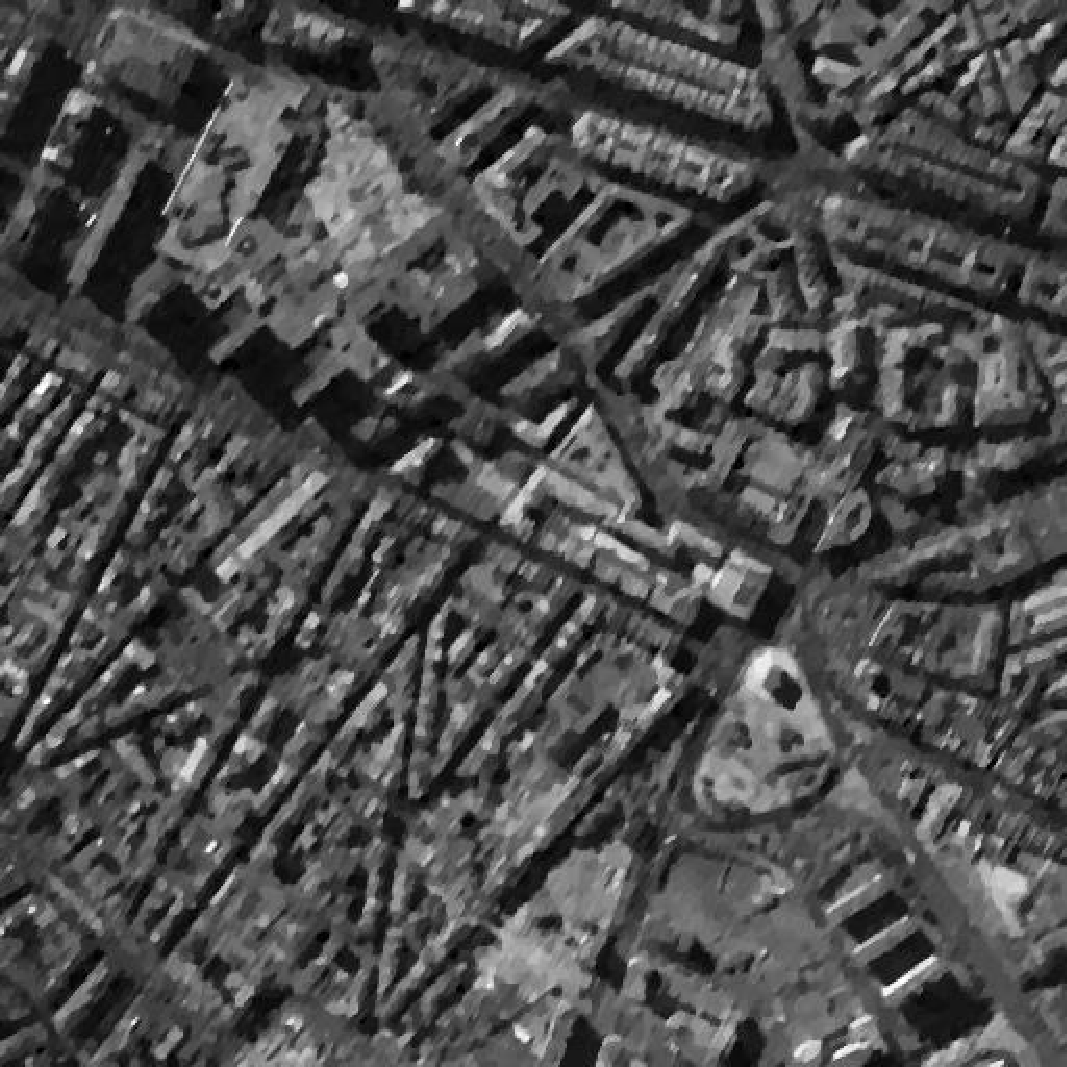}
        \subcaption{FastTV} \label{satellite1-FastTV}
    \end{minipage}
    \begin{minipage}[t]{.2\linewidth}
        \centering
        \includegraphics[width=\textwidth]{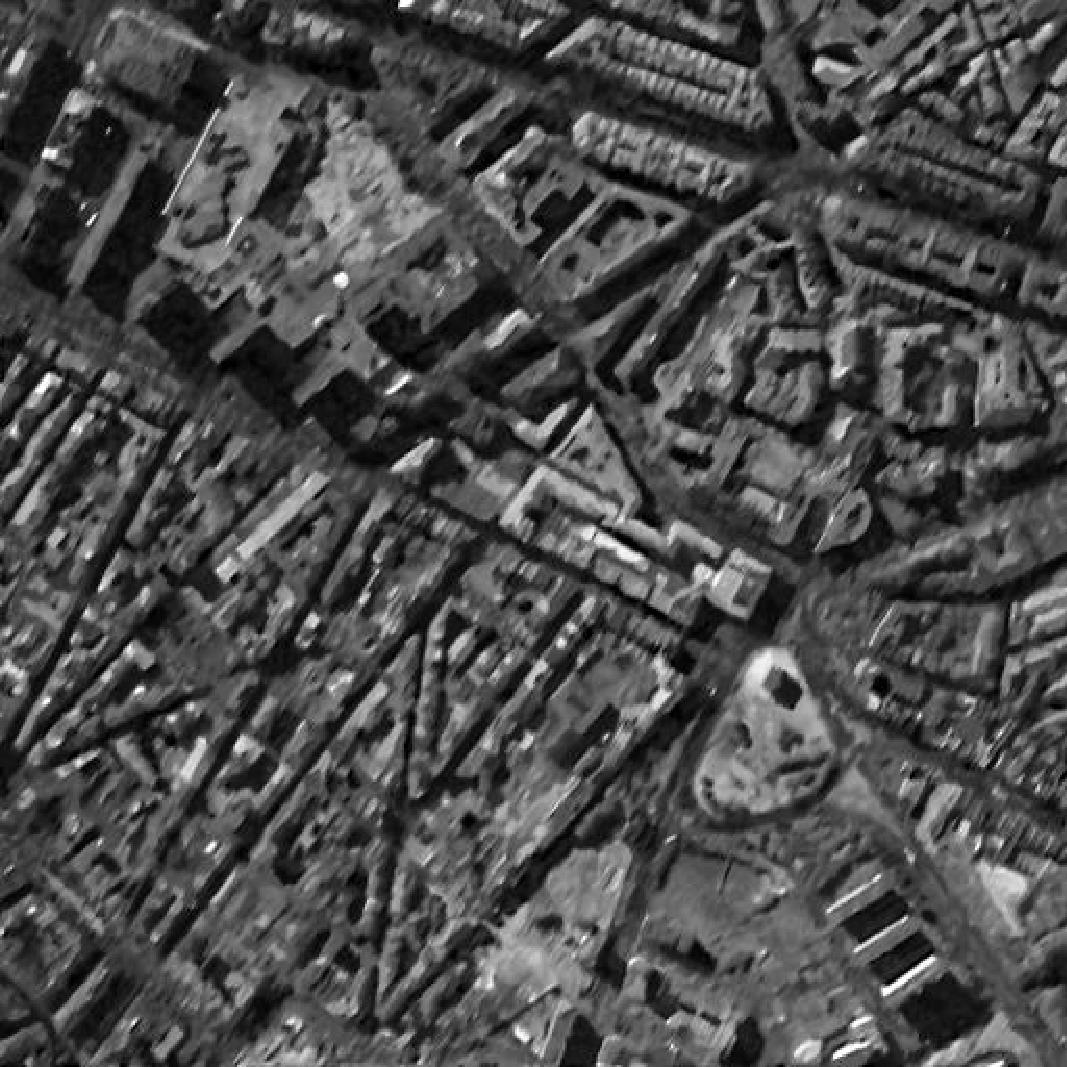}
        \subcaption{NLTV} \label{satellite1-NLTV}
    \end{minipage}
    
    \begin{minipage}[b]{.2\linewidth}
        \centering
        \includegraphics[width=\textwidth]{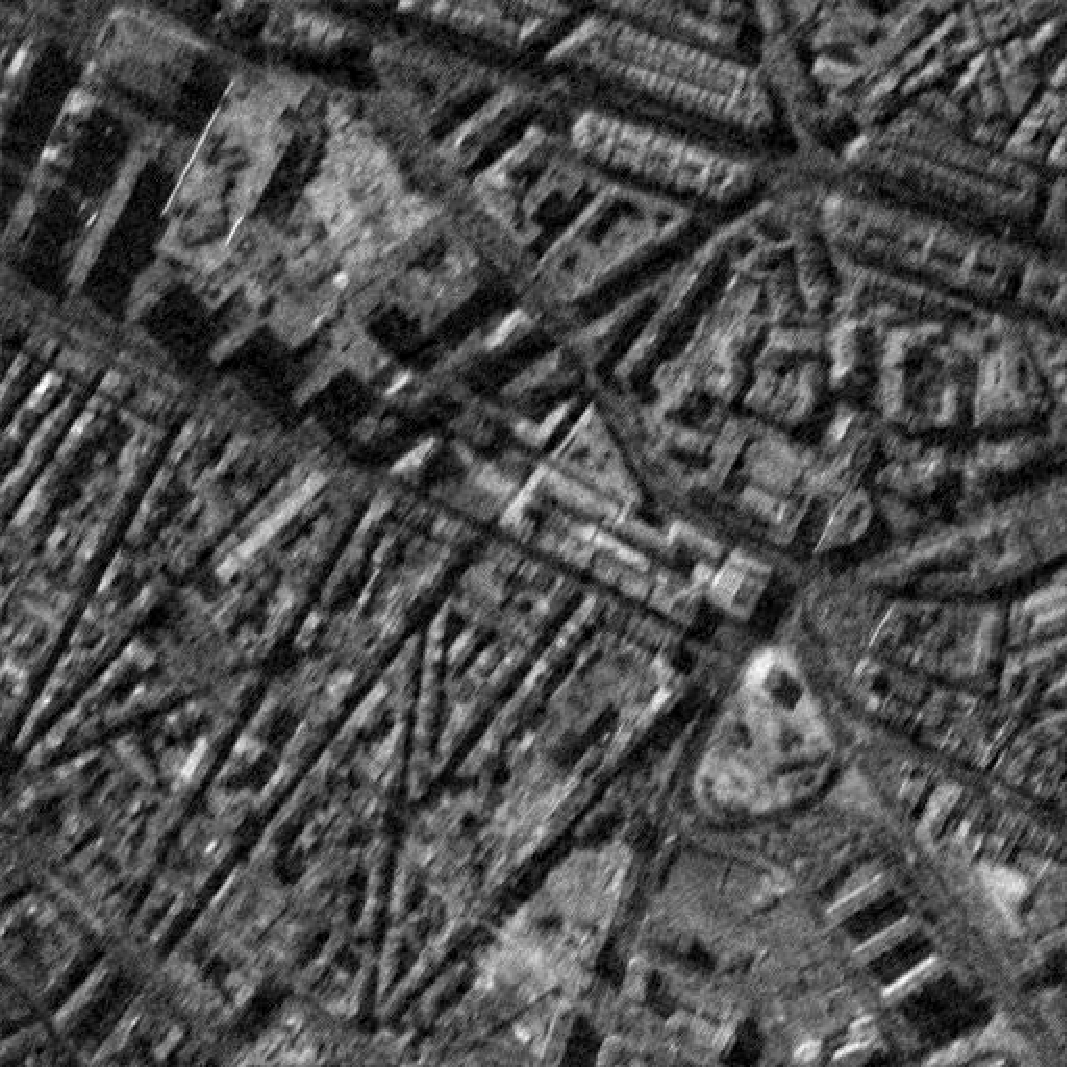}
        \subcaption{NLABH} \label{satellite1-NLABH}
    \end{minipage}
    \begin{minipage}[b]{.2\linewidth}
        \centering
        \includegraphics[width=\textwidth]{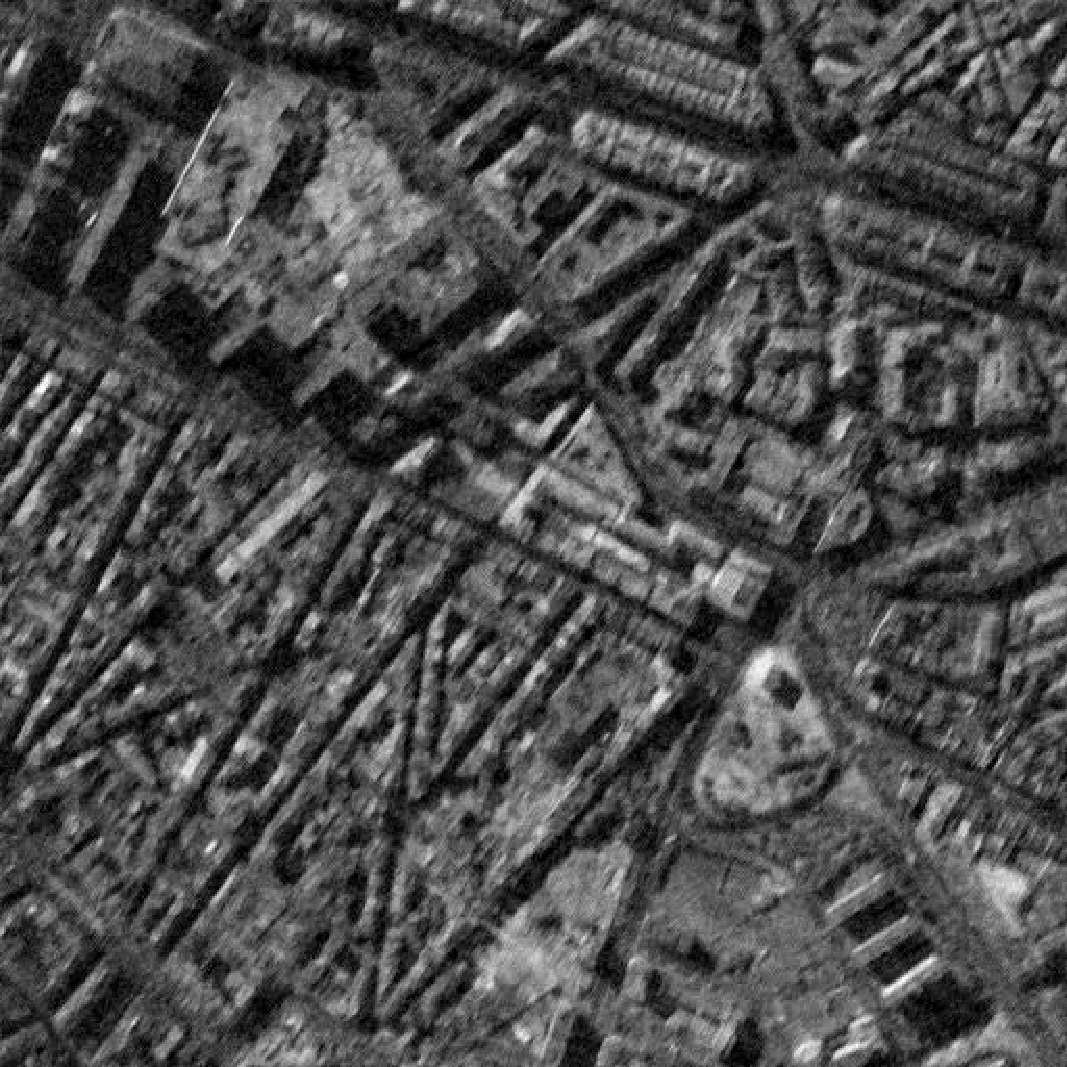}
        \subcaption{NFD} \label{satellite1-NFD}
    \end{minipage}
    \begin{minipage}[b]{.2\linewidth}
        \centering
        \includegraphics[width=\textwidth]{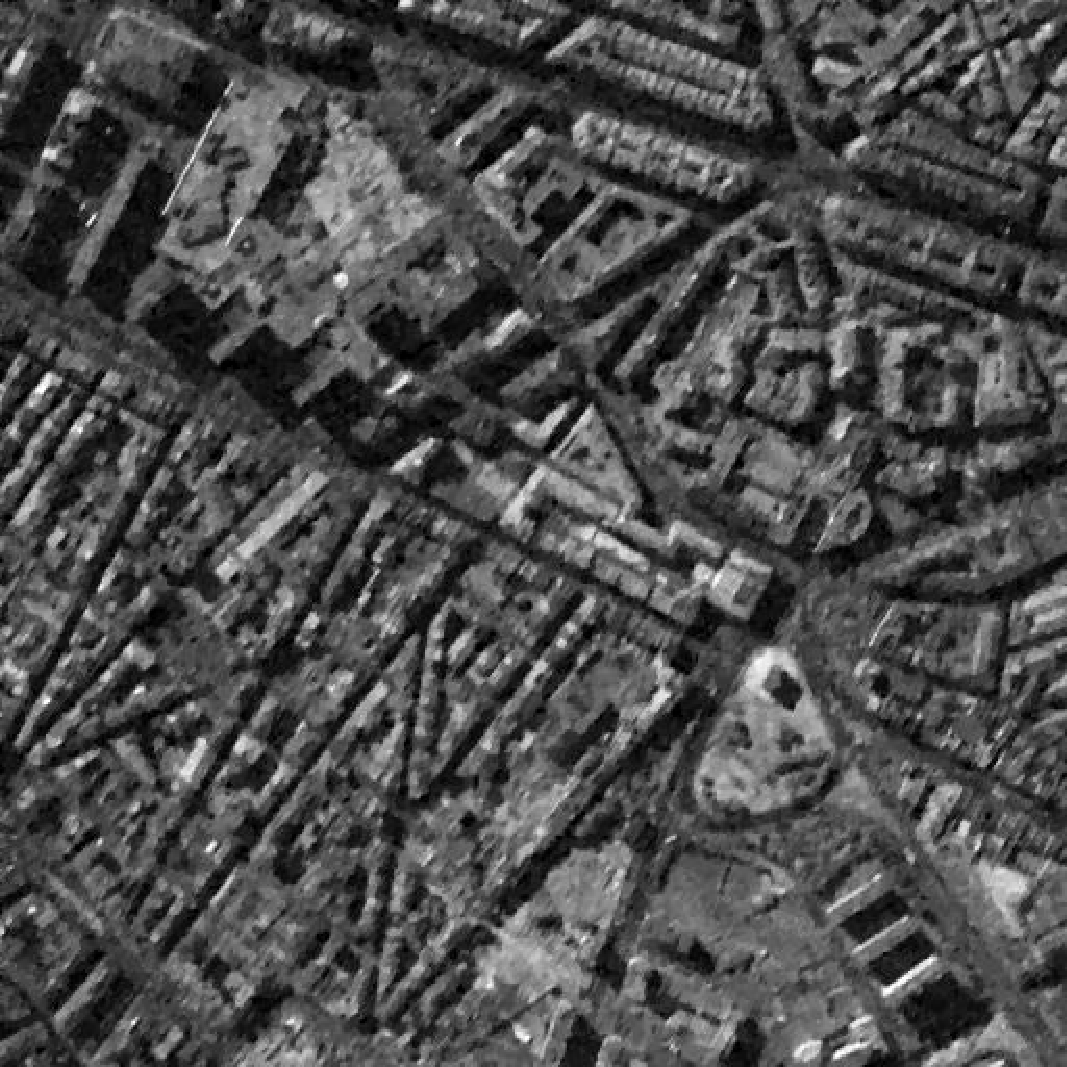}
        \subcaption{PLRPM} \label{satellite1-PLRPM}
    \end{minipage}
    \begin{minipage}[b]{.2\linewidth}
        \centering
        \includegraphics[width=\textwidth]{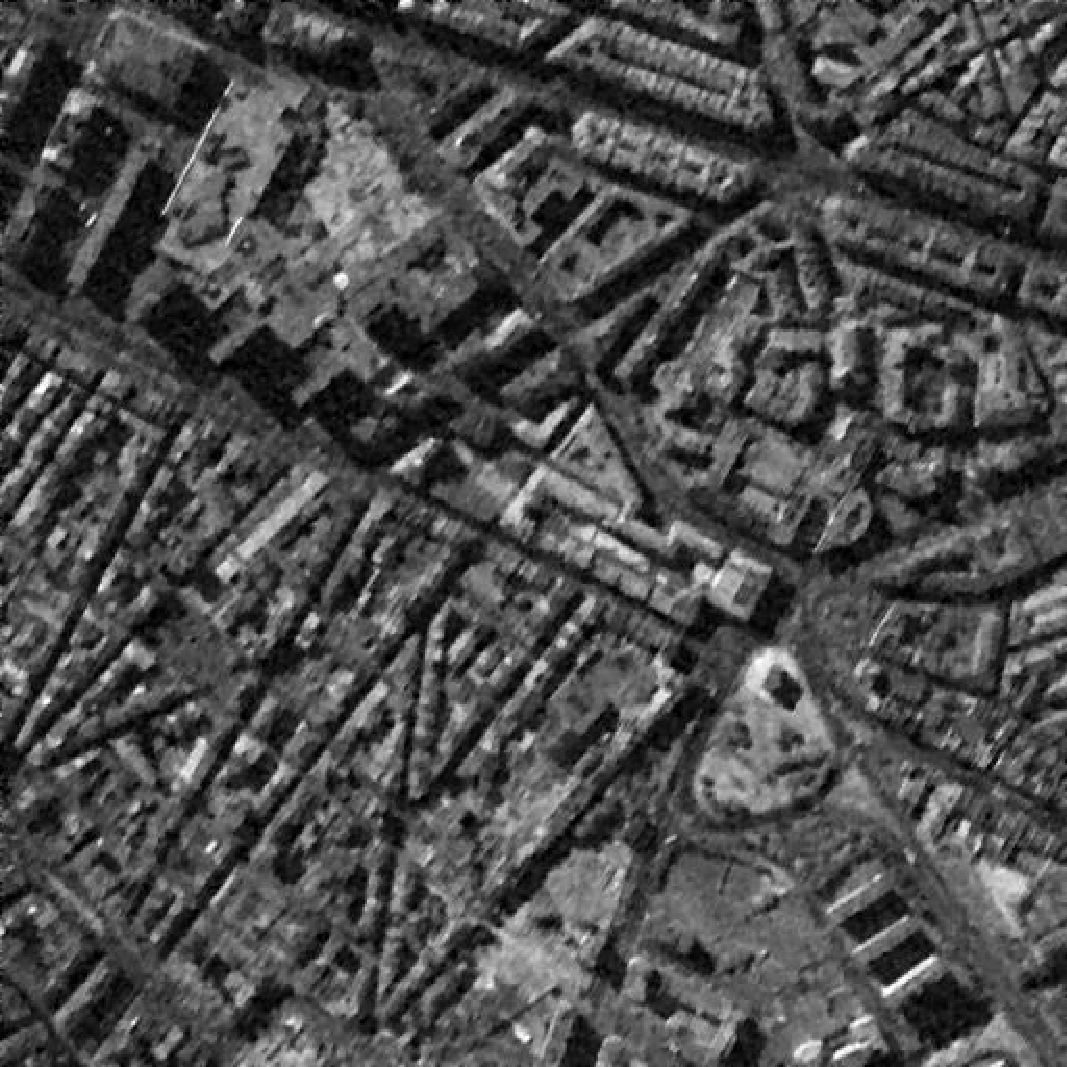}
        \subcaption{Ours} \label{satellite1-Ours}
    \end{minipage}
    \caption{Recovery results for the satellite1 image with motion blur and corrupted by 
    the noise of standard deviation $\sigma = 3$. (a) original image. 
    (b) noisy blurred image, PSNR=$18.64$. (c)-(h) recovered images.} \label{satellite1}
\end{figure}

\begin{figure}[!htbp]
    \centering
    \begin{minipage}[t]{.2\linewidth}
        \centering
        \includegraphics[width=\textwidth]{satellite2.eps}
        \subcaption{Original} \label{satellite2-original}
    \end{minipage} 
    \begin{minipage}[t]{.2\linewidth}
        \centering
        \includegraphics[width=\textwidth]{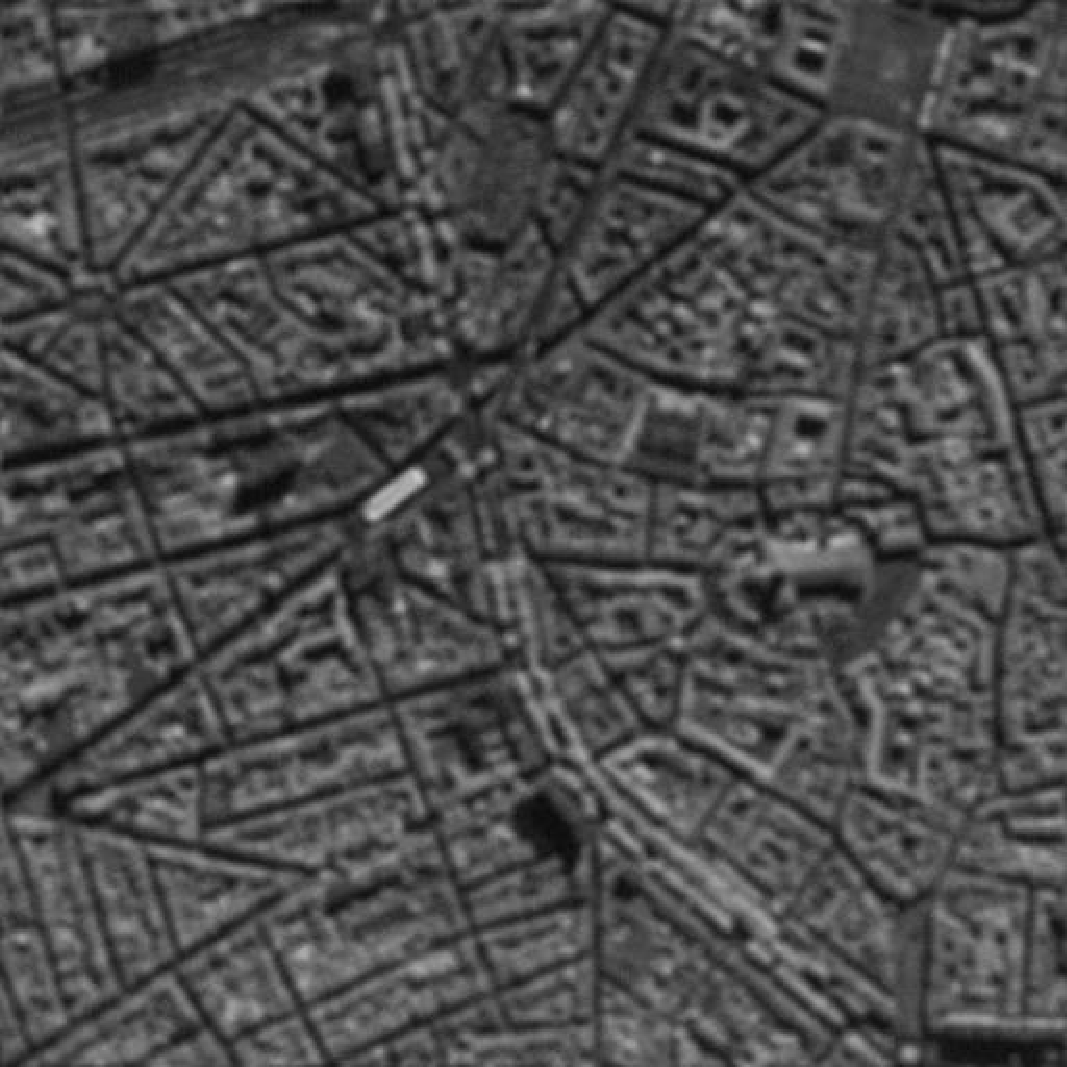}
        \subcaption{Blurred} \label{satellite2-blurred}
    \end{minipage}
    \begin{minipage}[t]{.2\linewidth}
        \centering
        \includegraphics[width=\textwidth]{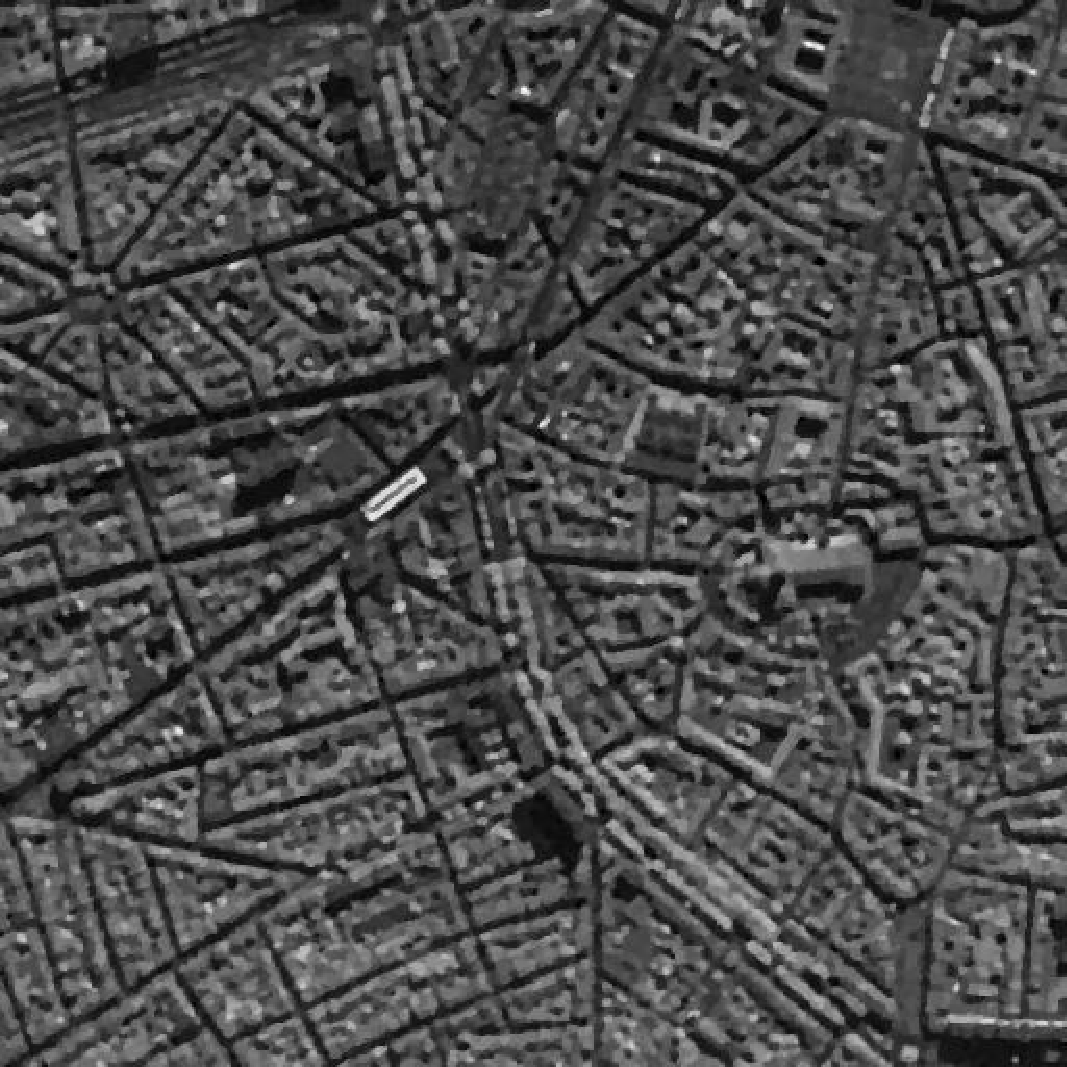}
        \subcaption{FastTV} \label{satellite2-FastTV}
    \end{minipage}
    \begin{minipage}[t]{.2\linewidth}
        \centering
        \includegraphics[width=\textwidth]{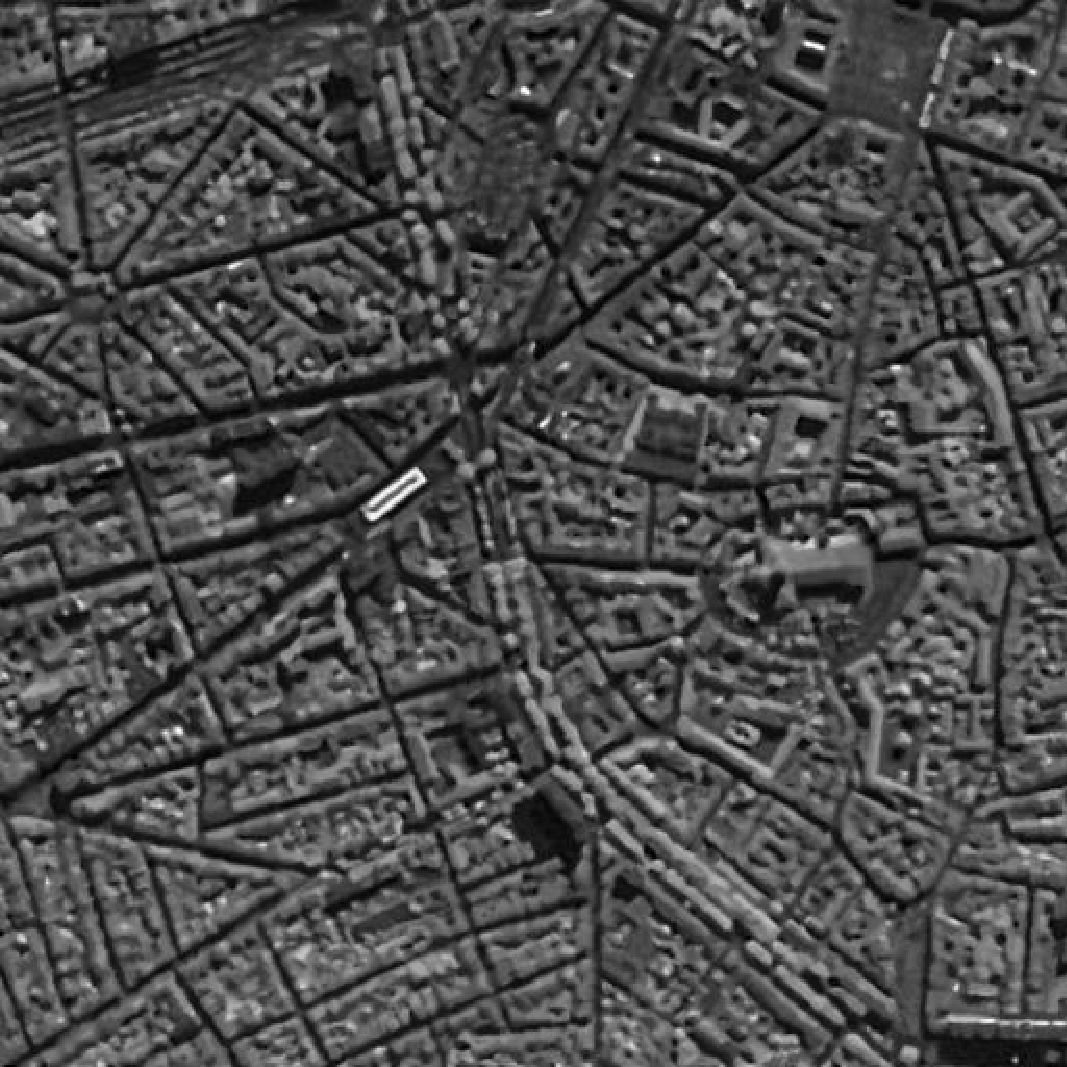}
        \subcaption{NLTV} \label{satellite2-NLTV}
    \end{minipage}
    
    \begin{minipage}[b]{.2\linewidth}
        \centering
        \includegraphics[width=\textwidth]{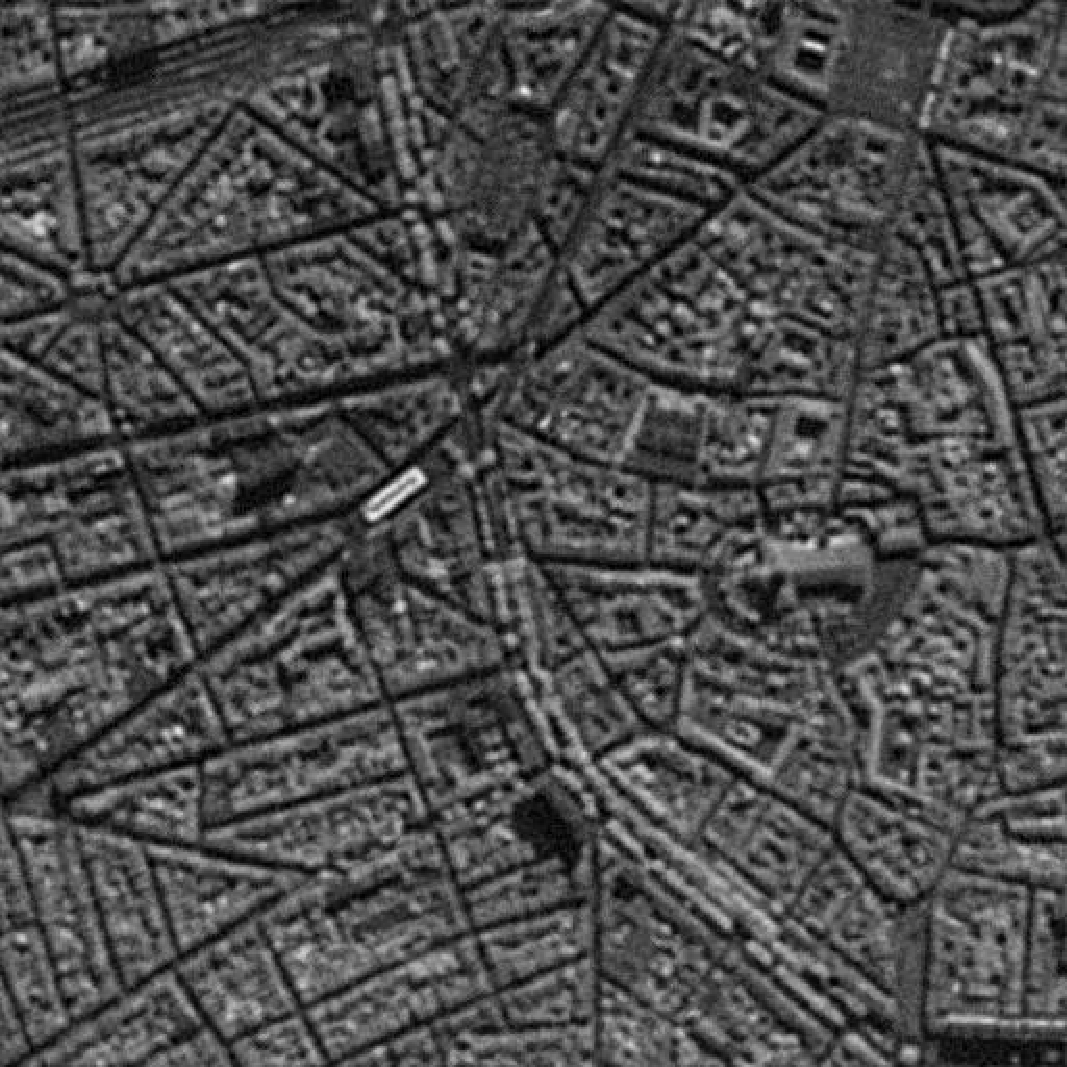}
        \subcaption{NLABH} \label{satellite2-NLABH}
    \end{minipage}
    \begin{minipage}[b]{.2\linewidth}
        \centering
        \includegraphics[width=\textwidth]{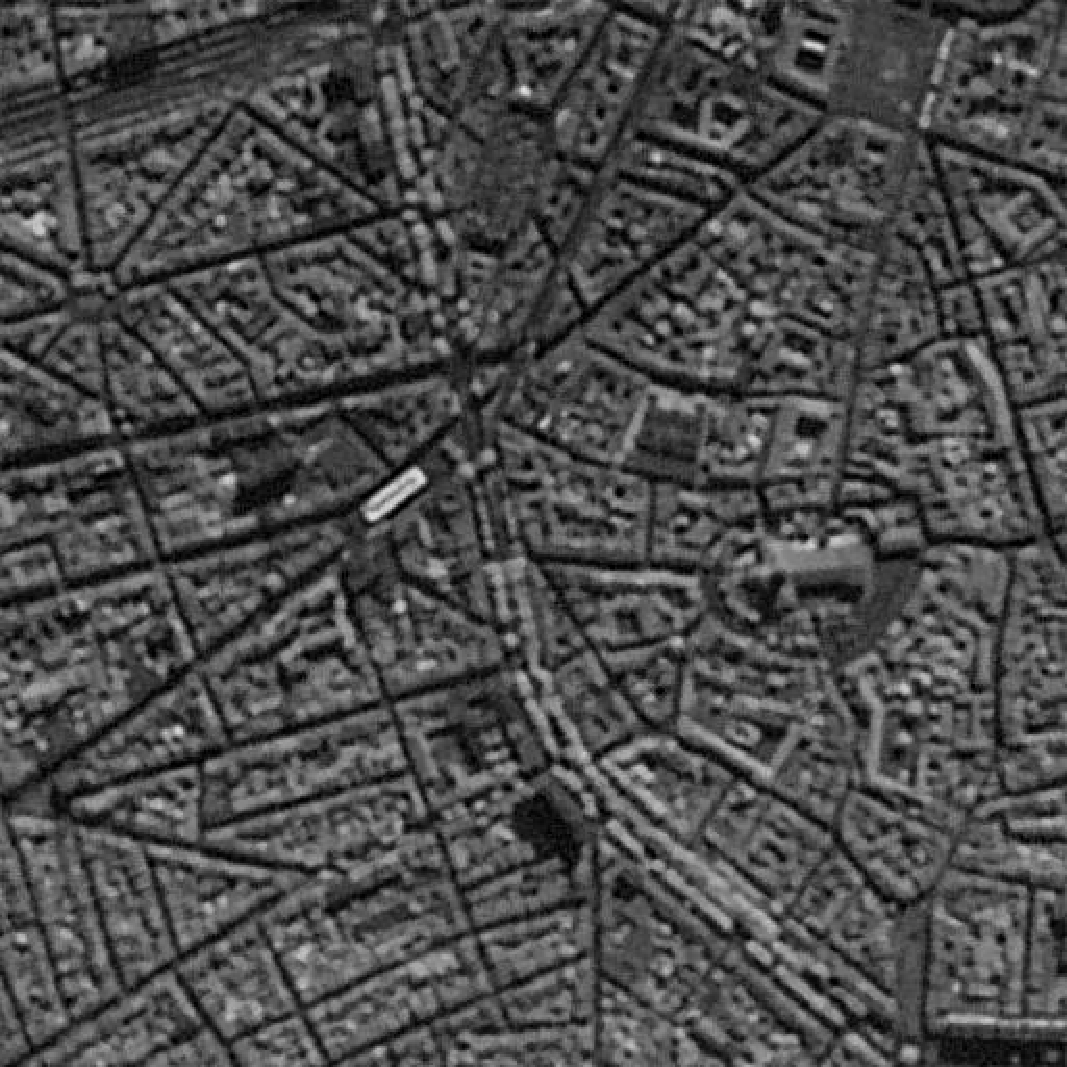}
        \subcaption{NFD} \label{satellite2-NFD}
    \end{minipage}
    \begin{minipage}[b]{.2\linewidth}
        \centering
        \includegraphics[width=\textwidth]{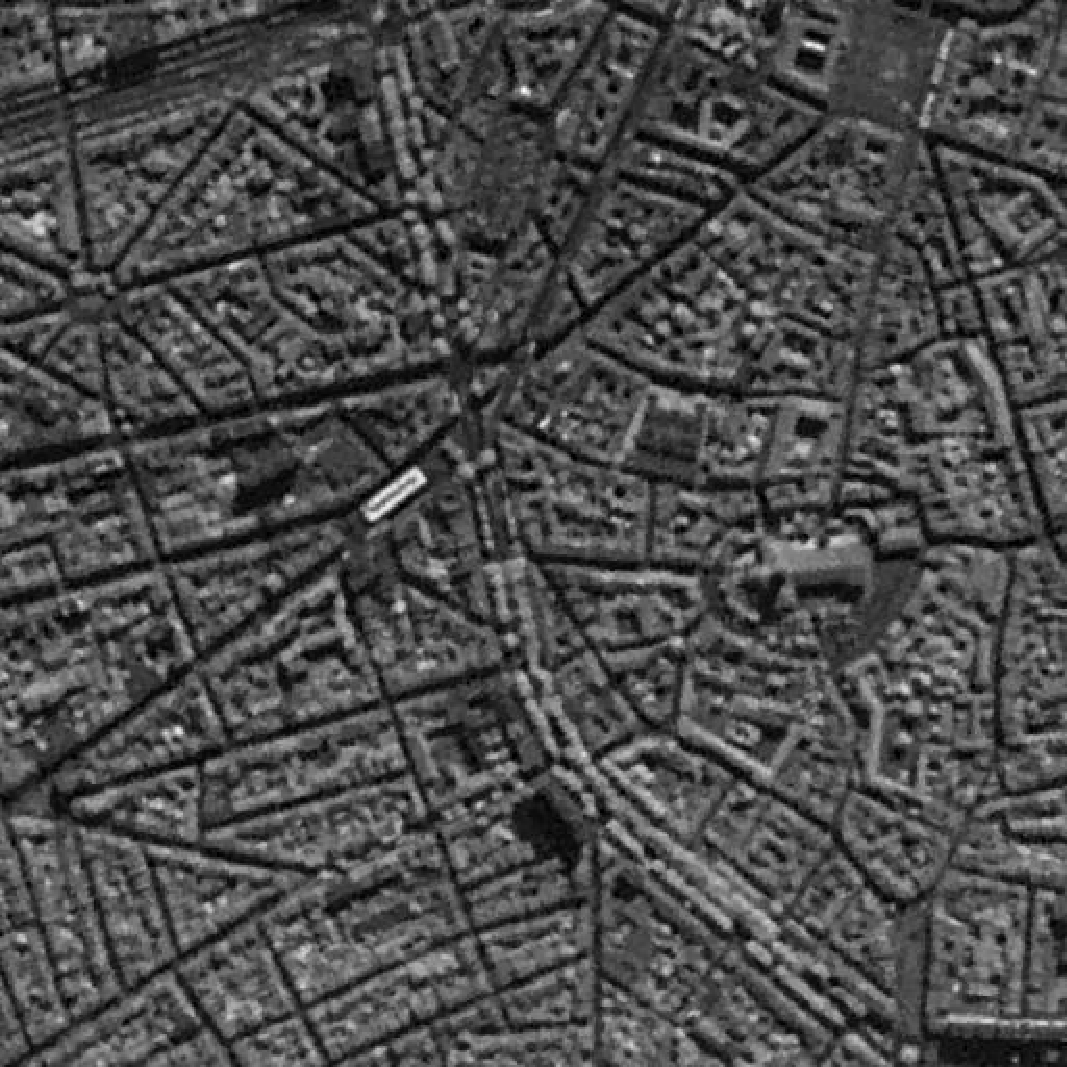}
        \subcaption{PLRPM} \label{satellite2-PLRPM}
    \end{minipage}
    \begin{minipage}[b]{.2\linewidth}
        \centering
        \includegraphics[width=\textwidth]{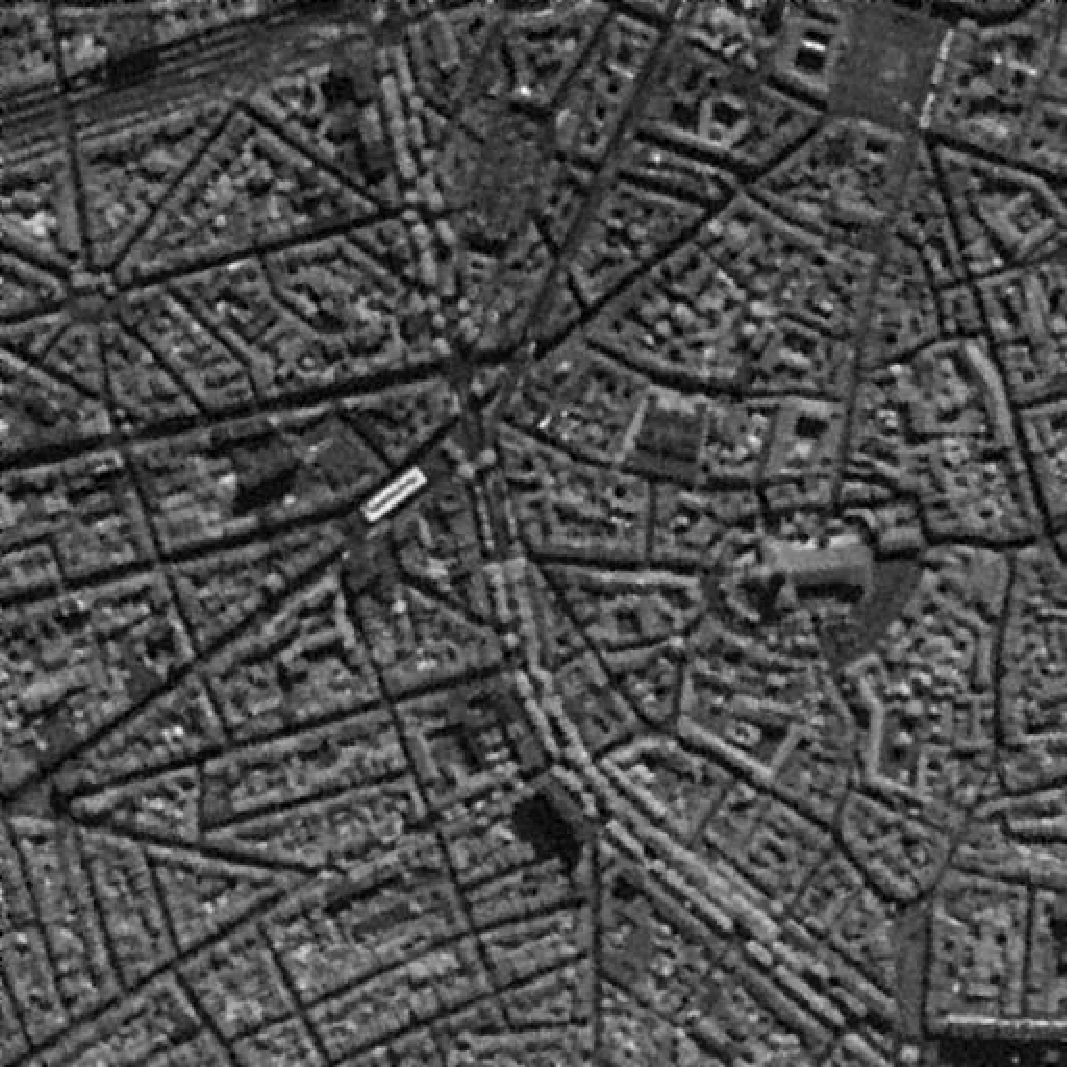}
        \subcaption{Ours} \label{satellite2-Ours}
    \end{minipage}
    \caption{Recovery results for the satellite2 image with disk blur and corrupted by 
    the noise of standard deviation $\sigma = 3$. (a) original image. 
    (b) noisy blurred image, PSNR=$21.68$. (c)-(h) recovered images.} \label{satellite2}
\end{figure}

\begin{figure}[htbp]
    \centering
    \begin{minipage}[t]{.2\linewidth}
        \centering
        \includegraphics[width=\textwidth]{satellite3.eps}
        \subcaption{Original} \label{satellite3-original}
    \end{minipage} 
    \begin{minipage}[t]{.2\linewidth}
        \centering
        \includegraphics[width=\textwidth]{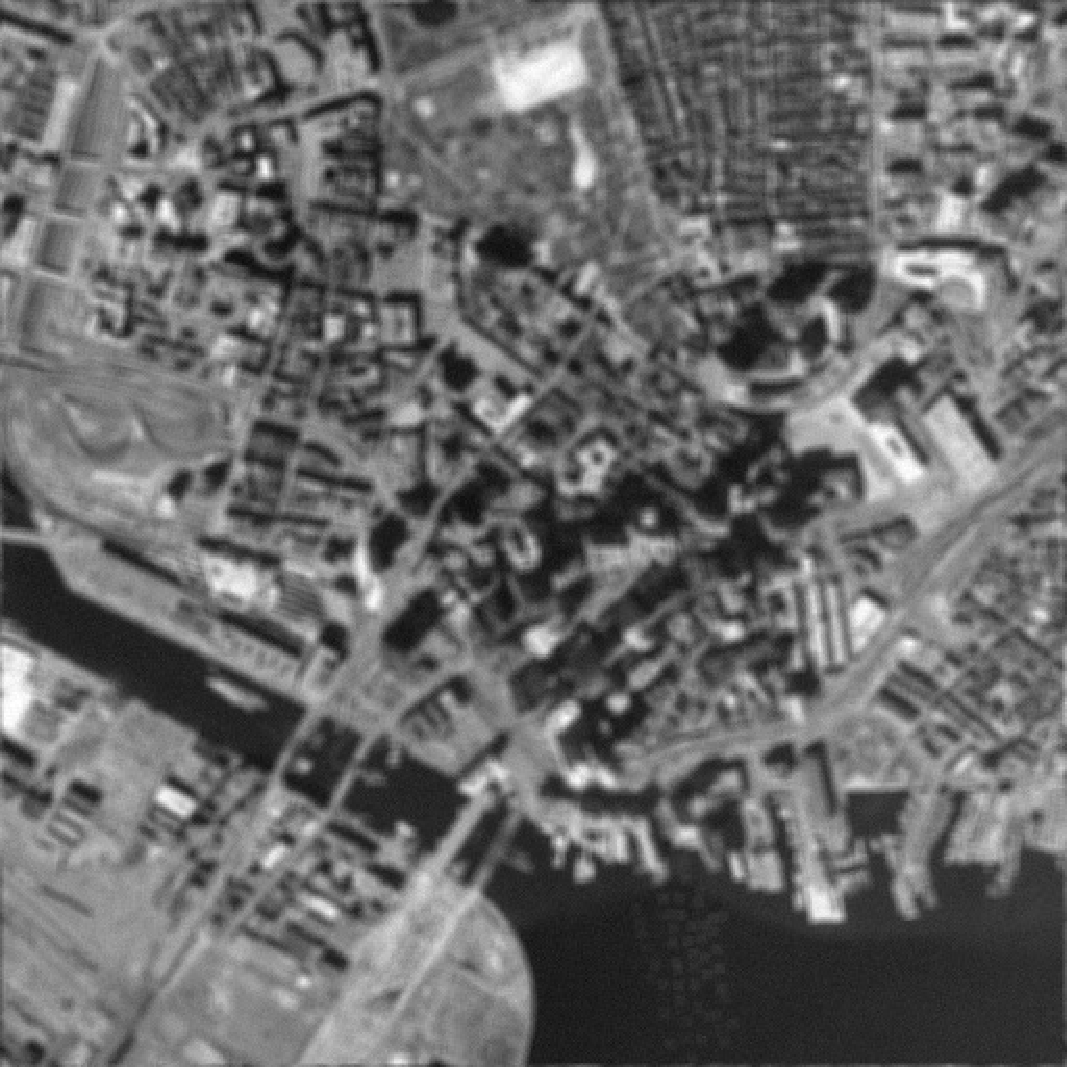}
        \subcaption{Blurred} \label{satellite3-blurred}
    \end{minipage}
    \begin{minipage}[t]{.2\linewidth}
        \centering
        \includegraphics[width=\textwidth]{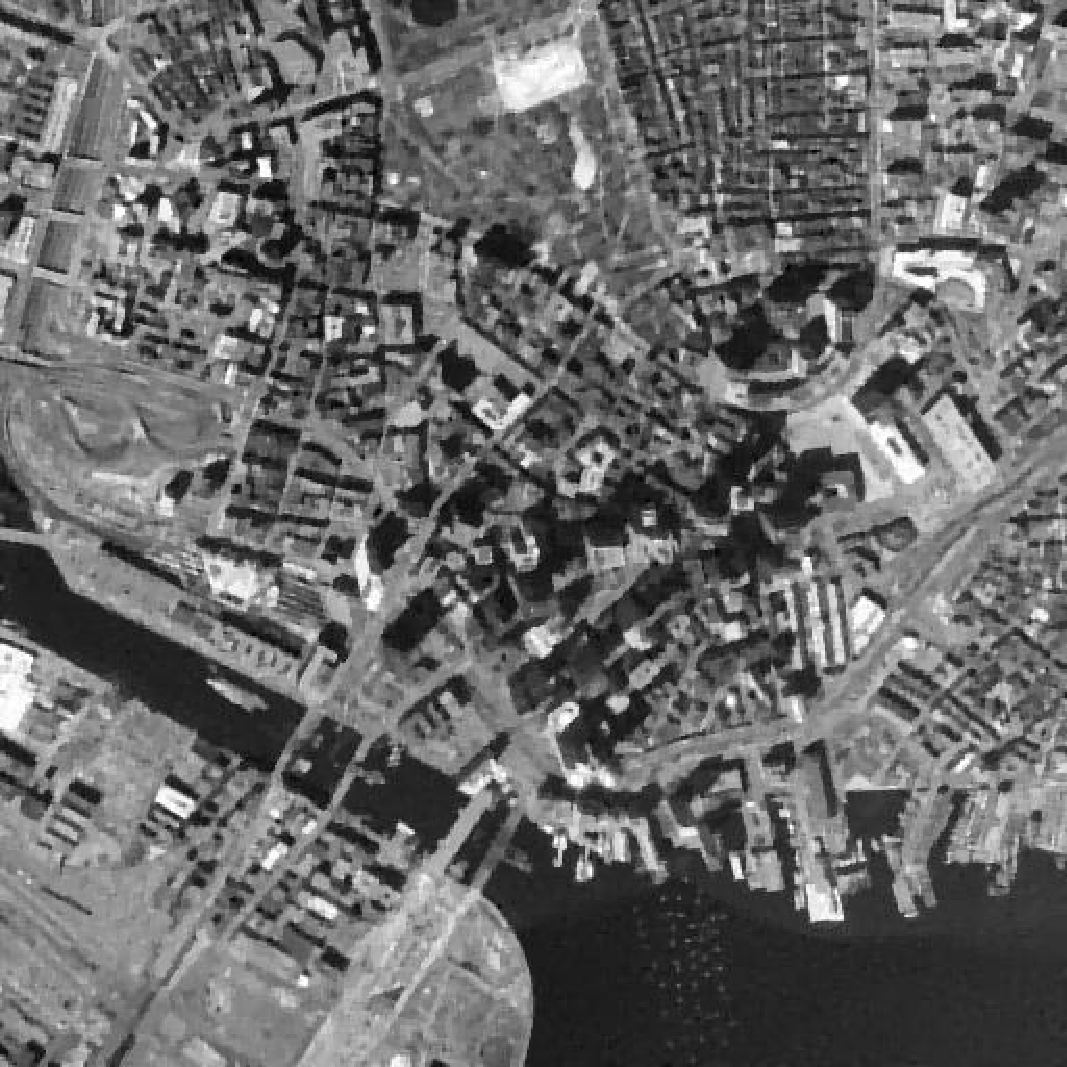}
        \subcaption{FastTV} \label{satellite3-FastTV}
    \end{minipage}
    \begin{minipage}[t]{.2\linewidth}
        \centering
        \includegraphics[width=\textwidth]{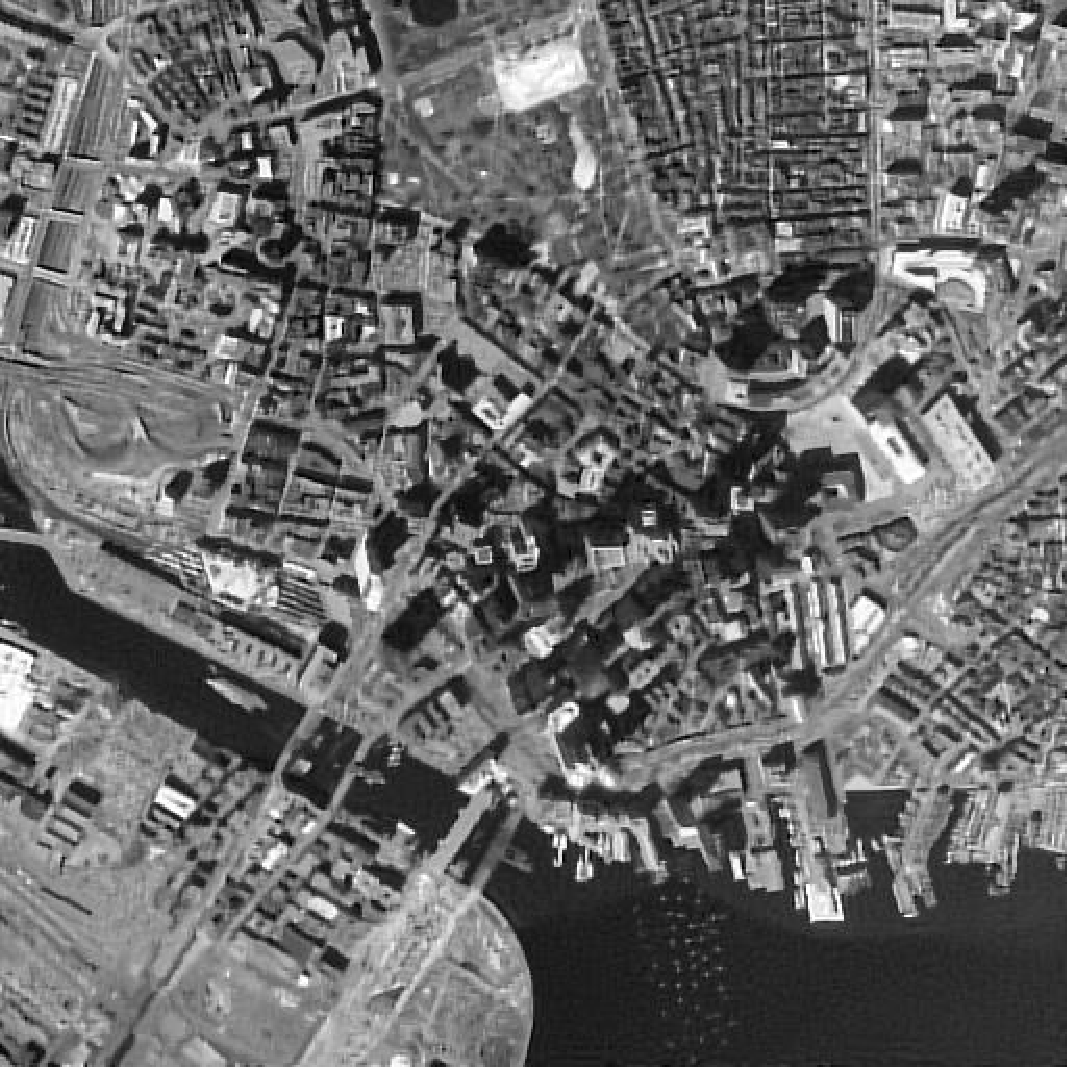}
        \subcaption{NLTV} \label{satellite3-NLTV}
    \end{minipage}
    
    \begin{minipage}[b]{.2\linewidth}
        \centering
        \includegraphics[width=\textwidth]{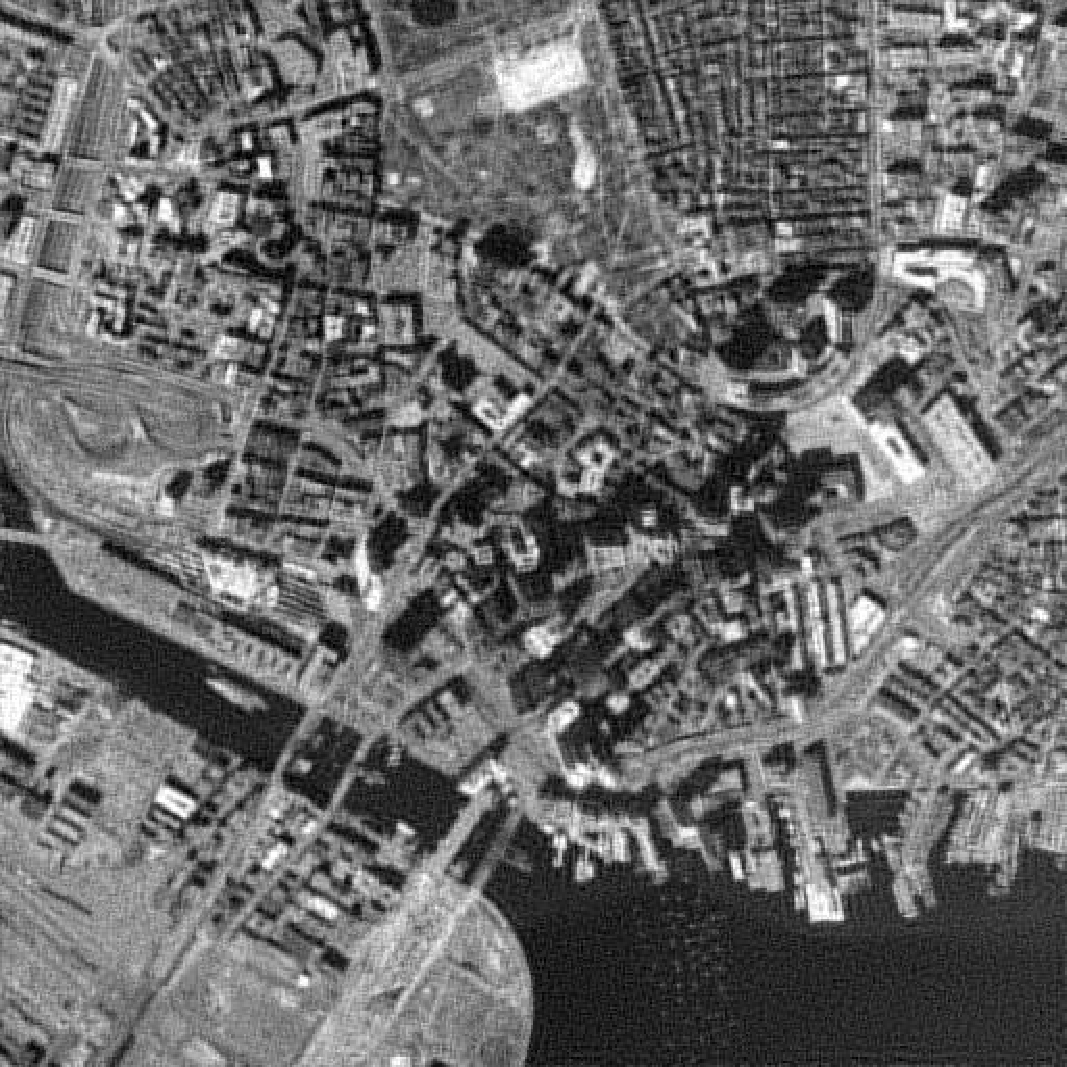}
        \subcaption{NLABH} \label{satellite3-NLABH}
    \end{minipage}
    \begin{minipage}[b]{.2\linewidth}
        \centering
        \includegraphics[width=\textwidth]{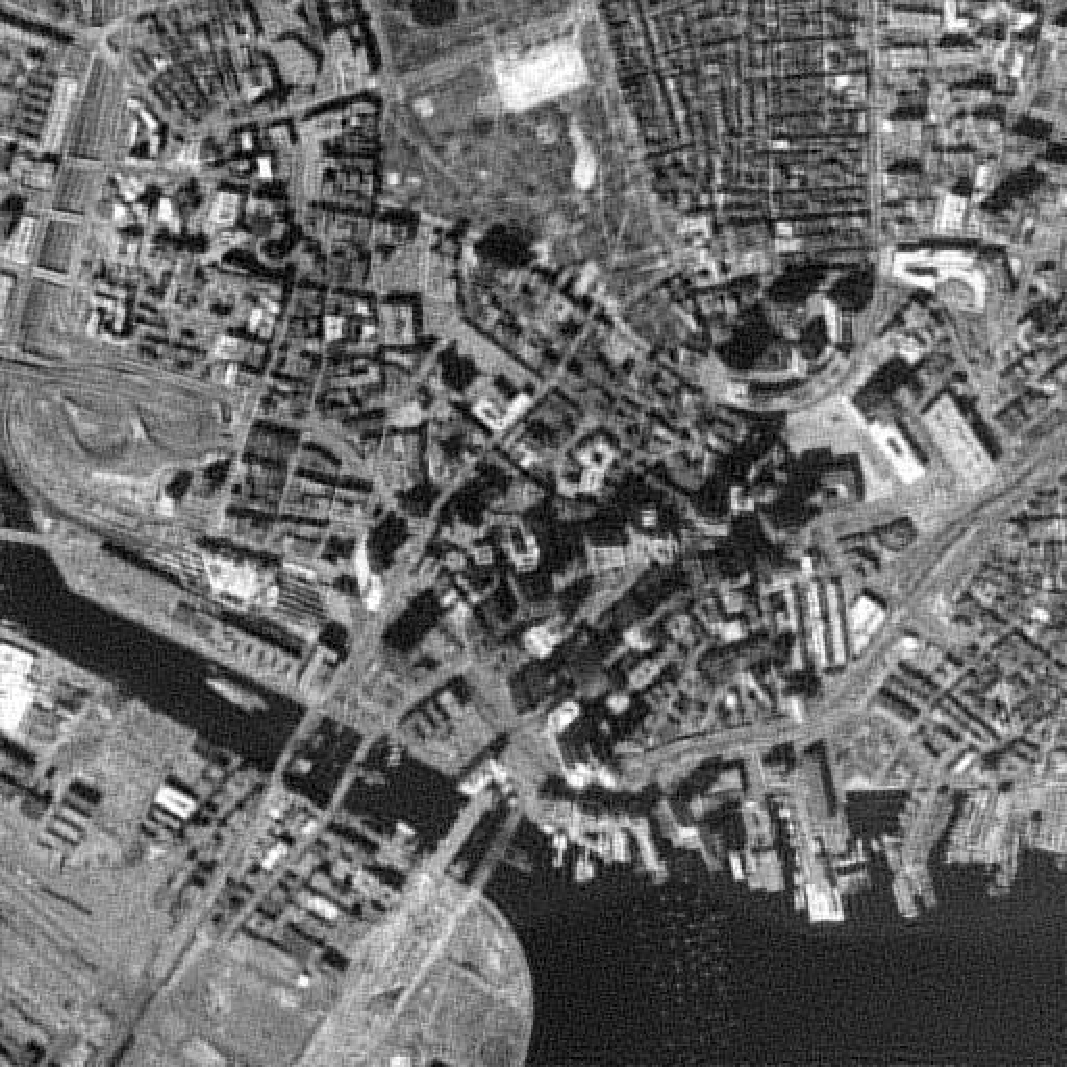}
        \subcaption{NFD} \label{satellite3-NFD}
    \end{minipage}
    \begin{minipage}[b]{.2\linewidth}
        \centering
        \includegraphics[width=\textwidth]{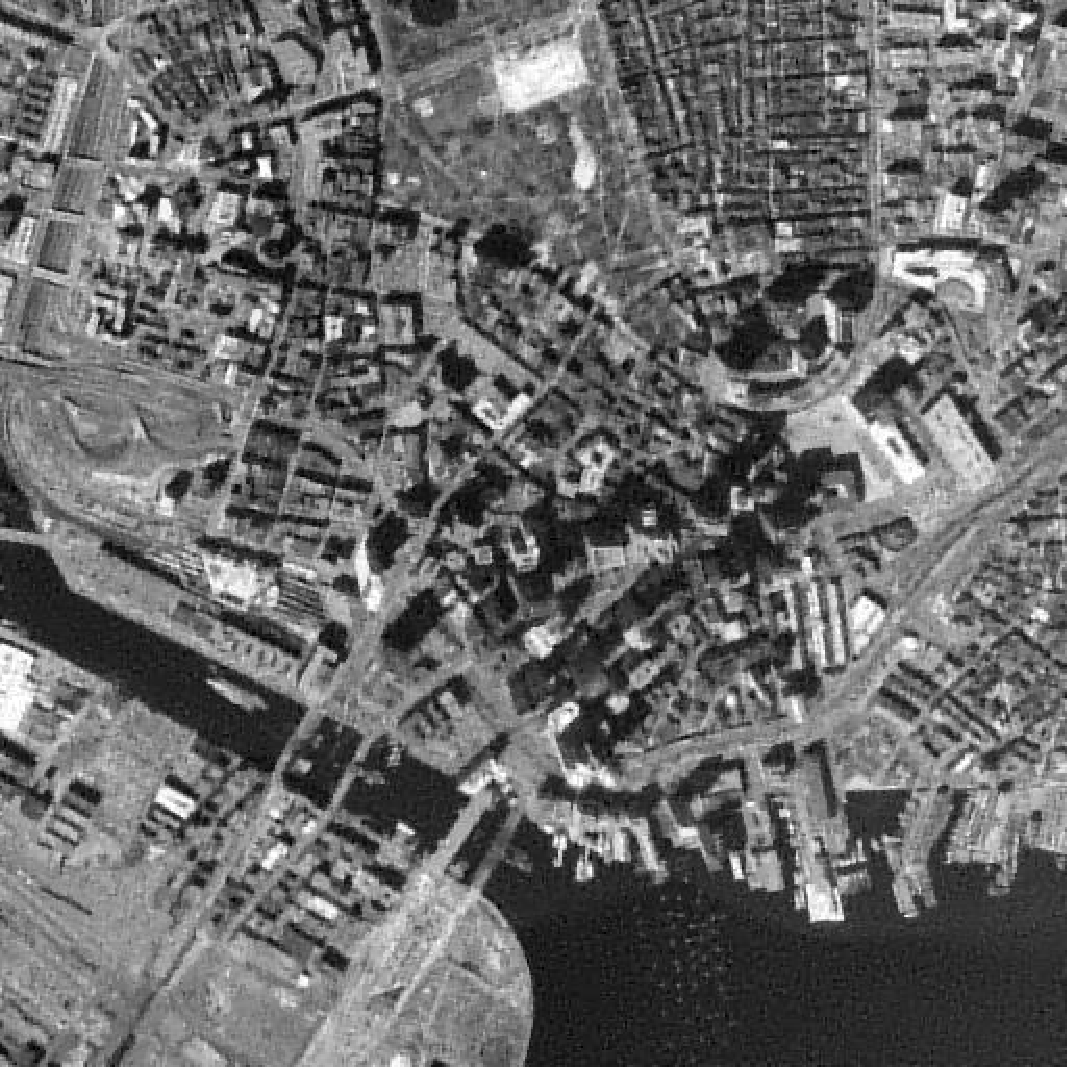}
        \subcaption{PLRPM} \label{satellite3-PLRPM}
    \end{minipage}
    \begin{minipage}[b]{.2\linewidth}
        \centering
        \includegraphics[width=\textwidth]{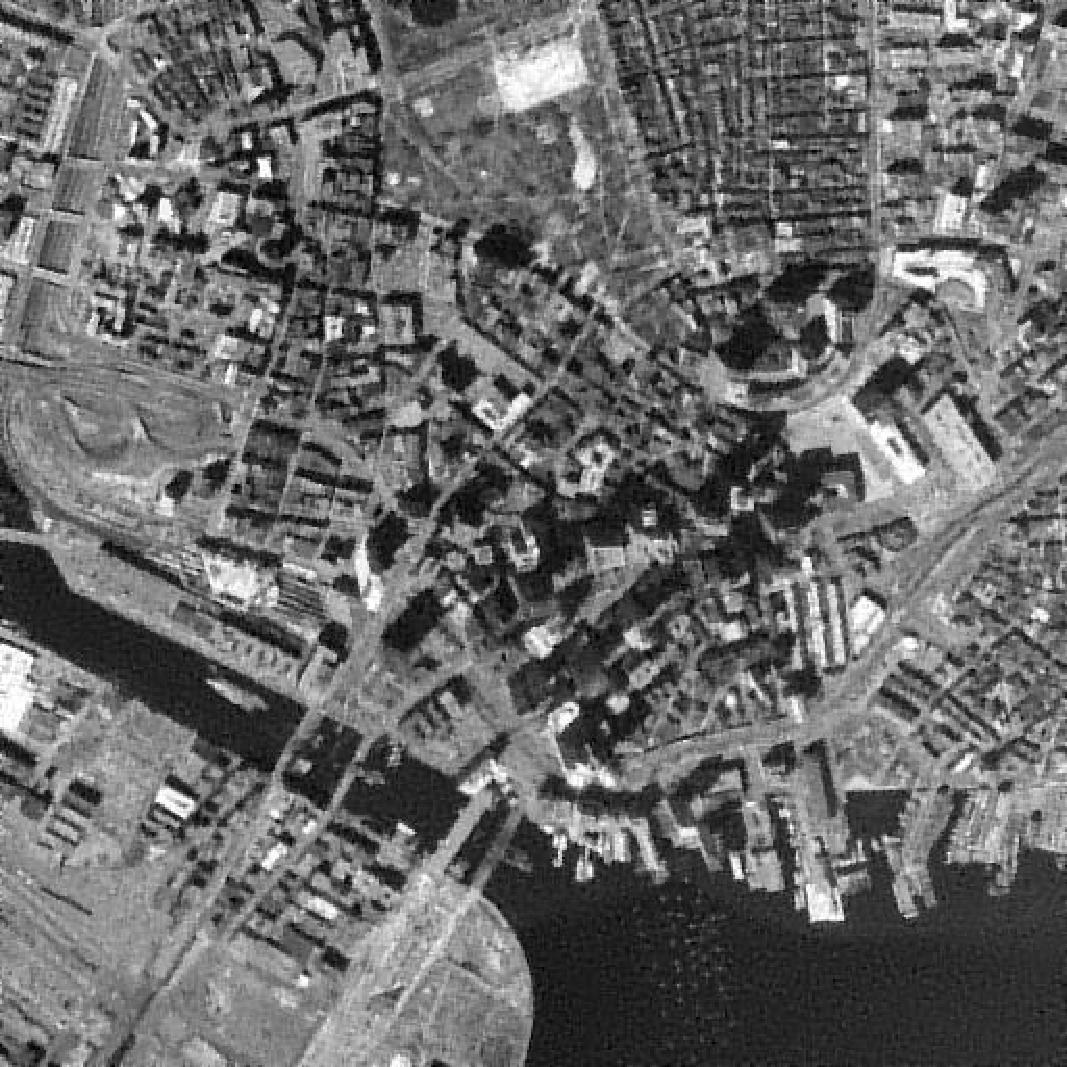}
        \subcaption{Ours} \label{satellite3-Ours}
    \end{minipage}
    \caption{Recovery results for the satellite3 image with average blur and corrupted by 
    the noise of standard deviation $\sigma = 3$. (a) original image. 
    (b) noisy blurred image, PSNR=$19.20$. (c)-(h) recovered images.} \label{satellite3}
\end{figure}

Now we report the numerical experiments of deblurring and denoising for the original
test images in Fig. \ref{original}. The corresponding results are shown in 
Figs. \ref{texture1}-\ref{satellite3}. Firstly, the restoration results 
for texture1 and texture2 are shown in Figs. \ref{texture1} and \ref{texture2}. 
Texture1 and texture2 images are blurred by disk kernel and average kernel, respectively. 
For these two experiments, we select $\lambda=45$ in the proposed model. We find that 
in Figs. \ref{texture1-FastTV} and \ref{texture2-FastTV}, there is minimal noise present, 
but the visual quality is over-smooth, with an evident loss of 
texture information. On the other hand, 
in Figs. \ref{texture1-NLABH}, \ref{texture1-NFD}, \ref{texture2-NLABH} 
and \ref{texture2-NFD}, texture details are better preserved, however, 
there is an increase in the noise level of the images. 
Compared to Figs. \ref{texture1-NLTV}, \ref{texture2-NLTV}, \ref{texture1-PLRPM}
and \ref{texture2-PLRPM}, the restoration results of the proposed model exhibit 
slightly higher noise levels but retain more texture information, resulting 
in a better visual quality, see Figs. \ref{texture1-Ours} and \ref{texture2-Ours}.
{For a detailed comparison, we also present zoomed-in views of 
a selected region from the texture1 image in Fig. \ref{texture1-box}. 
The corresponding restoration results for this region are shown in Fig. \ref{texture1-zoom}.}

\begin{figure}[!htbp]
    \centering
    \begin{minipage}[t]{.3\linewidth}
        \centering
        \includegraphics[width=\textwidth]{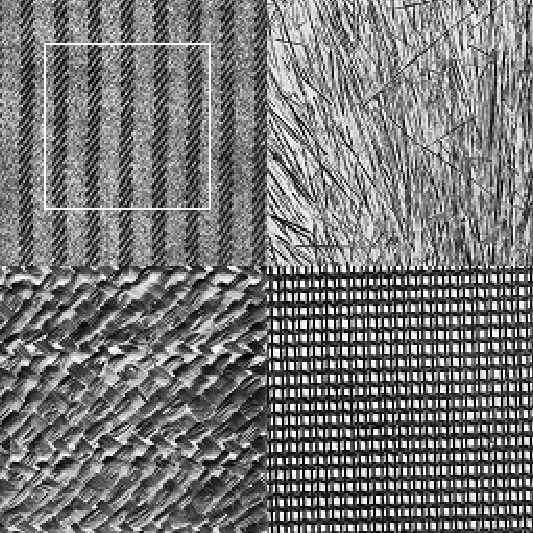} \subcaption{} \label{texture1-box}
    \end{minipage} 
    \begin{minipage}[t]{.3\linewidth}
        \centering
        \includegraphics[width=\textwidth]{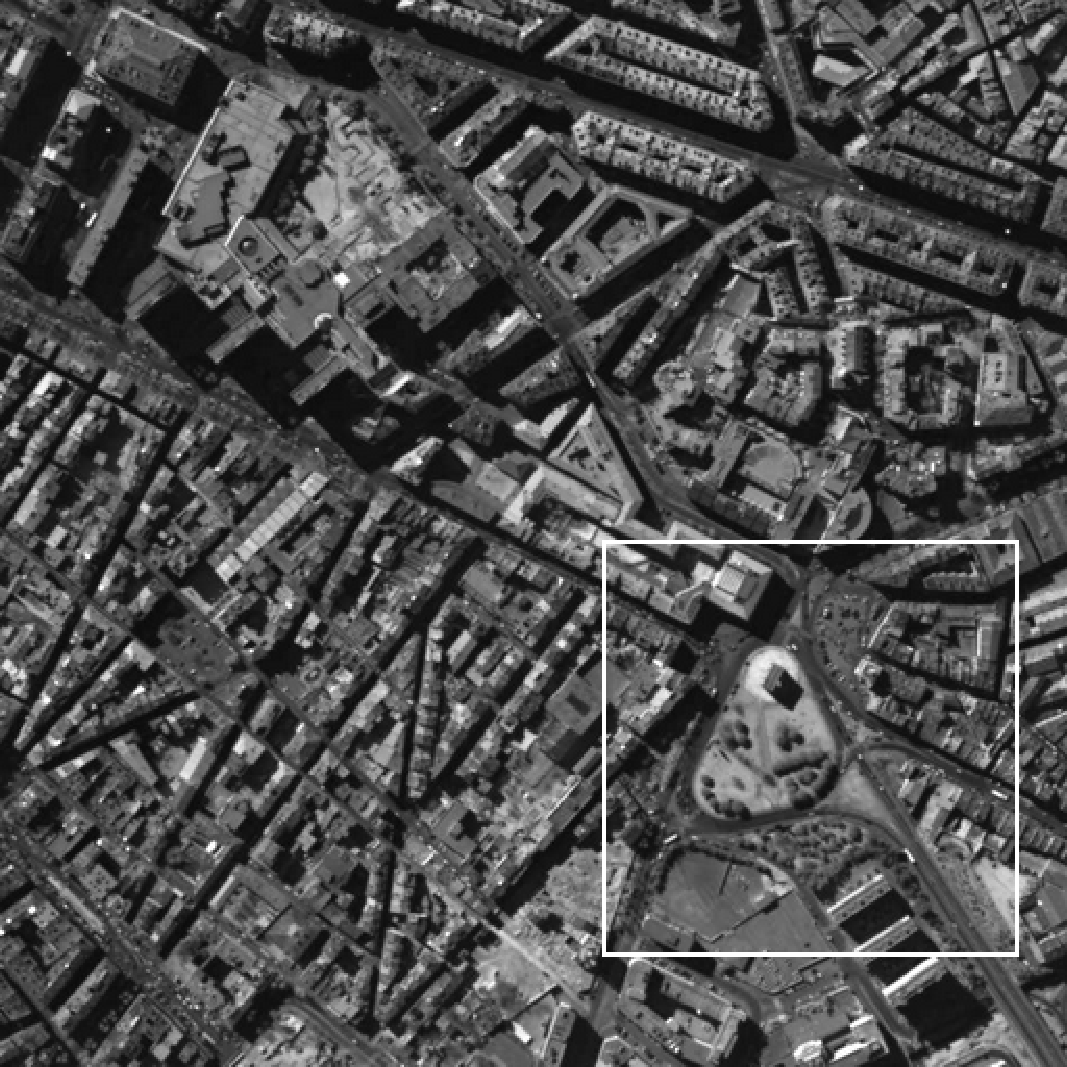} \subcaption{} \label{satellite1-box}
    \end{minipage}
    \caption{The original texture1 image and satellite1 image, the
    marked squares are marked for zooming.} \label{box}
\end{figure}

\begin{figure}[!htbp]
    \centering
    \begin{minipage}[t]{.2\linewidth}
        \centering
        \includegraphics[width=\textwidth]{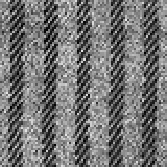}
        \subcaption{Original} \label{texture1-original-zoom}
    \end{minipage} 
    \begin{minipage}[t]{.2\linewidth}
        \centering
        \includegraphics[width=\textwidth]{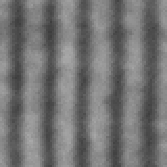}
        \subcaption{Blurred} \label{texture1-blurred-zoom}
    \end{minipage}
    \begin{minipage}[t]{.2\linewidth}
        \centering
        \includegraphics[width=\textwidth]{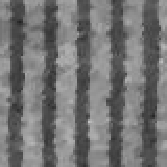}
        \subcaption{FastTV} \label{texture1-FastTV-zoom}
    \end{minipage}
    \begin{minipage}[t]{.2\linewidth}
        \centering
        \includegraphics[width=\textwidth]{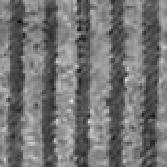}
        \subcaption{NLTV} \label{texture1-NLTV-zoom}
    \end{minipage}
    
    \begin{minipage}[b]{.2\linewidth}
        \centering
        \includegraphics[width=\textwidth]{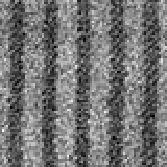}
        \subcaption{NLABH} \label{texture1-NLABH-zoom}
    \end{minipage}
    \begin{minipage}[b]{.2\linewidth}
        \centering
        \includegraphics[width=\textwidth]{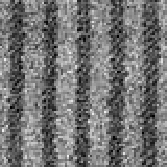}
        \subcaption{NFD} \label{texture1-NFD-zoom}
    \end{minipage}
    \begin{minipage}[b]{.2\linewidth}
        \centering
        \includegraphics[width=\textwidth]{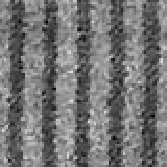}
        \subcaption{PLRPM} \label{texture1-PLRPM-zoom}
    \end{minipage}
    \begin{minipage}[b]{.2\linewidth}
        \centering
        \includegraphics[width=\textwidth]{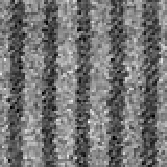}
        \subcaption{Ours} \label{texture1-Ours-zoom}
    \end{minipage}
    \caption{Zoomed-in images corresponding to the results shown in Fig. \ref{texture1}.} \label{texture1-zoom}
\end{figure}

\begin{figure}[htbp]
    \centering
    \begin{minipage}[t]{.2\linewidth}
        \centering
        \includegraphics[width=\textwidth]{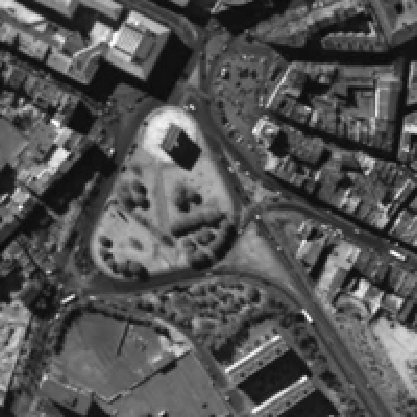}
        \subcaption{Original} \label{satellite1-original-zoom}
    \end{minipage} 
    \begin{minipage}[t]{.2\linewidth}
        \centering
        \includegraphics[width=\textwidth]{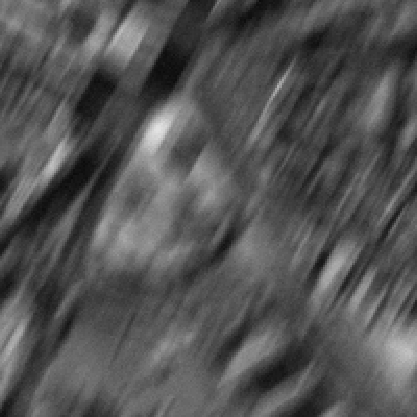}
        \subcaption{Blurred} \label{satellite1-blurred-zoom}
    \end{minipage}
    \begin{minipage}[t]{.2\linewidth}
        \centering
        \includegraphics[width=\textwidth]{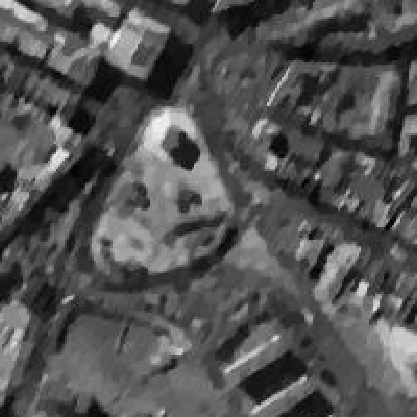}
        \subcaption{FastTV} \label{satellite1-FastTV-zoom}
    \end{minipage}
    \begin{minipage}[t]{.2\linewidth}
        \centering
        \includegraphics[width=\textwidth]{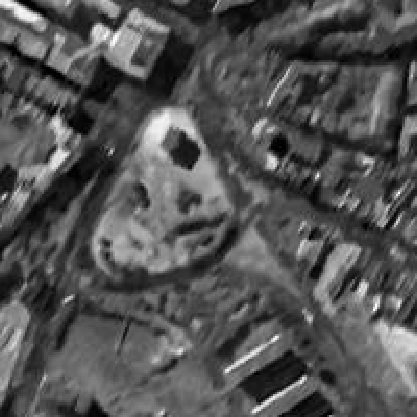}
        \subcaption{NLTV} \label{satellite1-NLTV-zoom}
    \end{minipage}
    
    \begin{minipage}[b]{.2\linewidth}
        \centering
        \includegraphics[width=\textwidth]{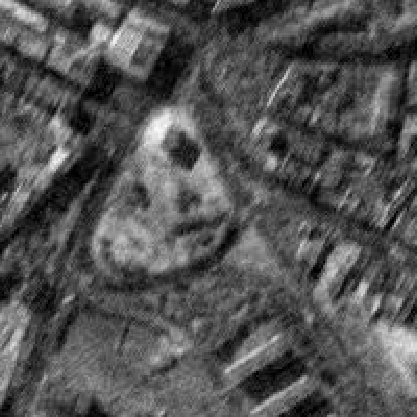}
        \subcaption{NLABH} \label{satellite1-NLABH-zoom}
    \end{minipage}
    \begin{minipage}[b]{.2\linewidth}
        \centering
        \includegraphics[width=\textwidth]{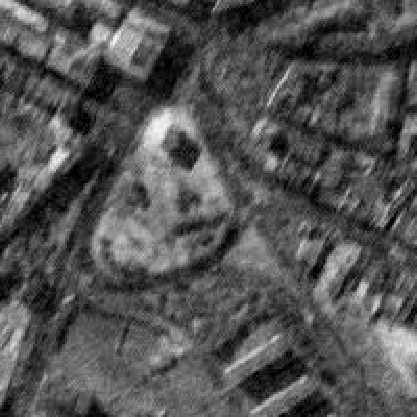}
        \subcaption{NFD} \label{satellite1-NFD-zoom}
    \end{minipage}
    \begin{minipage}[b]{.2\linewidth}
        \centering
        \includegraphics[width=\textwidth]{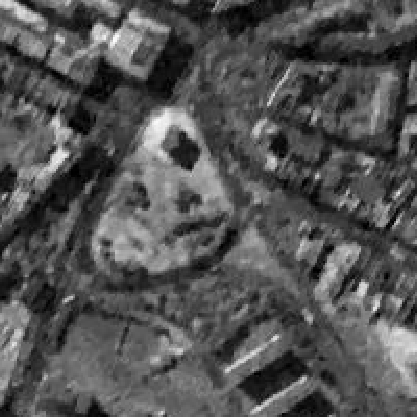}
        \subcaption{PLRPM} \label{satellite1-PLRPM-zoom}
    \end{minipage}
    \begin{minipage}[b]{.2\linewidth}
        \centering
        \includegraphics[width=\textwidth]{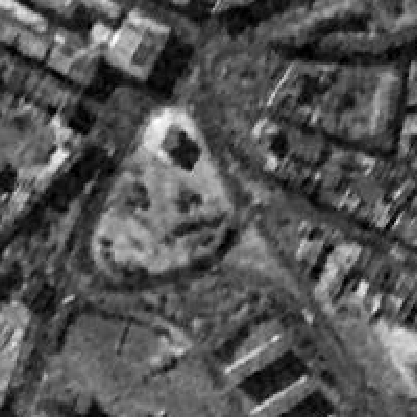}
        \subcaption{Ours} \label{satellite1-Ours-zoom}
    \end{minipage}
    \caption{Zoomed-in images corresponding to the results shown in Fig. \ref{satellite1}.} \label{satellite1-zoom}
\end{figure}

For the restoration results of the hybrid image, some new phenomena emerge, 
as illustrated in Fig. \ref{hybrid}. The central area of the hybrid image 
is rich in texture, while the surrounding area in smooth, 
as shown in Fig. \ref{hybrid-original}. The hybrid image is blurred by a motion kernel, 
posing a challenge for all models, see Fig. \ref{hybrid-blurred}.
For this experiment, we select $\lambda=15$ in the proposed model. 
It is observed from Fig. \ref{hybrid-FastTV} that the restoration 
result of FastTV contains fewer noise and artifacts
in smooth areas, but it fails to preserve texture information. NLABH and NFD 
noticeably amplify noise in smooth regions and produce obvious artifacts, as 
seen in Figs. \ref{hybrid-NLABH} and \ref{hybrid-NFD}. In the restoration 
results of the proposed model, noise in smooth areas is slightly more pronounced 
compared to NLTV and PLRPM, but texture preservation is better than PLRPM, and 
there are no artifacts in smooth regions as seen in the restoration results of NLTV, 
see Figs. \ref{hybrid-NLTV}, \ref{hybrid-PLRPM}, and \ref{hybrid-Ours}.

Finally, the restoration results for three satellite images are shown in Figs. 
\ref{satellite1}-\ref{satellite3}. The three satellite images are respectively 
blurred by motion kernel, disk kernel, and average kernel, as shown in 
Figs. \ref{satellite1-blurred}, \ref{satellite2-blurred}, and \ref{satellite3-blurred}.
We select $\lambda=15, 15, 10$ in the proposed model respectively 
for these three experiments. Similar to the previous experiments, the restoration 
results of FastTV and NLTV are relatively smooth, and the noise levels in the 
restoration results of NLABH and NFD are increased. Overall, the methods with better 
visual restoration results are NLTV, PLRPM, and the proposed method. Our method slightly 
reduces visual quality on satellite3 due to increased noise in smooth regions, 
but performs best on satellite1 and satellite2, as shown in 
Figs. \ref{satellite1-Ours}, \ref{satellite2-Ours}, and \ref{satellite3-Ours}. 
{Fig. \ref{satellite1-zoom} shows the zoomed-in restoration results 
corresponding to the selected region of the satellite1 image in Fig. \ref{satellite1-box}.}

{\section{Final remarks} \label{sec6}
The local well-posedness of the proposed model and the regularity 
of local solutions have been investigated. We hope that our work helps fill a 
gap in the theoretical analysis of similar image processing problems 
and may offer useful insight for future studies. 
Establishing global well-posedness conditions and the characterization of 
the steady-state limit for the general case are open and challenging problems
that need further study. 
We point out that the considered model does not possess a variational structure, 
and whether it can be explained from an energy point of view remains to be studied. 
It should also be emphasized that the theoretical analysis in this paper does not 
cover the case $\inf_{x \in \Omega} f(x) = 0$, which introduces extra degeneracy
and prevents the use of maximal regularity theory. Addressing this 
care may require brand-new ideas and techniques. Finally, numerical experiments 
have verified the effectiveness of the proposed model, while the design of fast algorithms 
that allow larger time steps represents another interesting direction for future work.}

\end{document}